\documentclass[11pt,reqno]{amsart}
\usepackage[margin=1in,letterpaper]{geometry}
\usepackage{graphicx}
\usepackage{amssymb}
\usepackage{amsthm}
\usepackage{mathrsfs}
\usepackage{stmaryrd}
\usepackage{accents} 
\usepackage{enumitem} 
\usepackage[bookmarksopen,bookmarksdepth=3]{hyperref} 

\makeatletter\let\over\@@over\makeatother

\numberwithin{equation}{section}
\theoremstyle{plain} 
\newtheorem{theorem}{Theorem}[section] 
\newtheorem{proposition}[theorem]{Proposition} 
\newtheorem{corollary}[theorem]{Corollary}
\newtheorem{lemma}[theorem]{Lemma}
\theoremstyle{remark}
\newtheorem{remark}[theorem]{Remark}

\newcommand{\be}{\begin{equation}}
\newcommand{\ee}{\end{equation}}%
\newcommand{\bse}{\begin{subequations}}
\newcommand{\ese}{\end{subequations}}


\newcommand{\supp}[1]{\operatorname{supp}{#1}}
\newcommand{\sech}{\operatorname{sech}} 
\newcommand{\jump}[1]{\left\llbracket{#1}\right\rrbracket}

\newcommand{\kernel}{\operatorname{ker}}

\newcommand{\id}{\operatorname{id}}
\newcommand{\FSproj}{\mathcal{Q}}
\newcommand{\FP}{\mathfrak{W}}



\newcommand{\R}{\mathbb{R}} 
\newcommand{\placeholder}{\,\cdot\,}
\newcommand{\maps}{\colon}         
\newcommand{\by}{\times}         
\newcommand{\sub}{\subset}         
\newcommand{\n}[2][]{#1\lVert #2 #1\rVert}
\newcommand{\abs}[2][]{#1\lvert #2 #1\rvert}
\newcommand{\dell}{\partial}
\newcommand{\ina}{\textup{~in~}}
\newcommand{\ona}{\textup{~on~}}
\newcommand{\fora}{\textup{~for~}}
\newcommand{\asa}{\textup{~as~}}
\renewcommand{\l}{\kappa} 

\newcommand{\even}{\mathrm{e}}      
\newcommand{\bdd}{\mathrm{b}}

\newcommand\nl{\mathcal N}   
\newcommand\base{{\Omega^\prime}} 
\newcommand\A{\mathcal A}    
\newcommand\B{\mathcal B}    
\newcommand\G{\mathcal G}    
\newcommand\K{\mathcal K}    
\newcommand\F{\mathscr F}    

\newcommand\strainW{\mathcal W}    

\newcommand\homholder{\mathring C} 

\newcommand\Xspace{{\mathscr X}}
\newcommand\Yspace{{\mathscr Y}}
\newcommand\Hspace{{\mathscr H}}

\newcommand\atXspace{\mathfrak{X}}  
\newcommand\atYspace{\mathfrak{Y}}
\newcommand\atZspace{\mathfrak{Z}}
\newcommand\atF{\mathfrak{F}}


\newcommand\polydyn{p}
\newcommand\polyflow{\tilde q}
\newcommand\polynew{q}
\newcommand\polyconj{\mathcal P}

\newcommand\flow{\mathcal S}
\newcommand\flowdiff{\tilde{\mathcal S}}
\newcommand\flowred{s}
\newcommand\flowscaled{S}

\begin{document}

\title[Center manifolds for quasilinear PDE]{Center manifolds without a phase space for quasilinear problems in elasticity, biology, and hydrodynamics}
\date{\today}

\author[R. M. Chen]{Robin Ming Chen}
\address{Department of Mathematics, University of Pittsburgh, Pittsburgh, PA 15260} 
\email{mingchen@pitt.edu}  

\author[S. Walsh]{Samuel Walsh}
\address{Department of Mathematics, University of Missouri, Columbia, MO 65211} 
\email{walshsa@missouri.edu}

\author[M. H. Wheeler]{Miles H. Wheeler}
\address{Faculty of Mathematics, University of Vienna, Vienna 1080, Austria}
\email{miles.wheeler@univie.ac.at}

\begin{abstract}
In this paper, we present a novel center manifold reduction theorem for quasilinear elliptic equations posed on infinite cylinders.  This is done without a phase space in the sense that we avoid explicitly reformulating the PDE as an evolution problem.  Under suitable hypotheses, the resulting center manifold is finite dimensional and captures all sufficiently small bounded solutions.  Compared with classical methods, the reduced ODE on the manifold is more directly related to the original physical problem and also easier to compute. The analysis is conducted directly in H\"older spaces, which is often desirable for elliptic equations.  

We then use this machinery to construct small bounded solutions to a variety of systems.    These include heteroclinic and homoclinic solutions of the anti-plane shear problem from nonlinear elasticity; exact slow moving invasion fronts in a two-dimensional Fisher--KPP equation; and hydrodynamic bores with vorticity in a channel.  The last example is particularly interesting in that we find solutions with critical layers and  distinctive ``half cat's eye'' streamline patterns.

\end{abstract}
\maketitle

\setcounter{tocdepth}{1} 
\tableofcontents

\section{Introduction} \label{introduction section}

Our basic objective in this paper relates to a classical problem: characterizing small bounded solutions of a quasilinear elliptic PDE posed on an unbounded cylinder $\Omega = \mathbb{R} \times \base$. The base of the cylinder $\base \subset \mathbb{R}^{n-1}$ is a bounded and connected $C^{2+\alpha}$ domain for some $\alpha \in (0,1)$, and the dimension $n \geq 2$.    For simplicity, say that $0 \in \base$.  

As a fairly representative example, we initially focus on the following quasilinear PDE:
\begin{equation}
  \left\{ \begin{aligned} 
    \nabla \cdot \A(y, u, \nabla u, \lambda)  + \B(y, u, \nabla u, \lambda) & = 0 \qquad \textrm{in } \Omega \\
    \G(y, u, \nabla u, \lambda) & = 0 \qquad \textrm{on } \partial\Omega,
  \end{aligned} \right. \label{main elliptic PDE} 
\end{equation}
where spatial coordinates in $\Omega$ are written $(x,y)$ for $x \in \mathbb{R}$ and $y \in \base$.  Here, $\lambda \in \mathbb{R}$ is a parameter, while $u = u(x,y) \in C^{2+\alpha}(\overline{\Omega})$ is the unknown.   We ask that the functions $\A = \A(y,z, p, \lambda)$, $\B = \B(y, z, p, \lambda)$, and $\G = \G(y, z,p, \lambda)$ are uniformly $C^{M+4}$ in their arguments for a fixed integer $M \geq 2$.  Moreover, we assume that the interior equation is uniformly elliptic in the sense that there exists $\theta > 0$ such that
\begin{equation}
  \label{nonlinear ellipticity}
   \sum_{i,j} \A_{i,p_j}(y,z,p,\lambda) q_i q_j \geq \theta |q|^2 \qquad \textrm{for all } y \in \base,  ~p, q \in \mathbb{R}^n,~z, \lambda \in \mathbb{R}. 
\end{equation}
The boundary condition is taken to be uniformly oblique in that there exists $\chi > 0$ such that 
\begin{equation}
  \label{nonlinear obliqueness}
  -N(y) \cdot \G_{p}(y, z, p, \lambda) \geq \chi \qquad \textrm{for all } y \in \base, ~p \in \mathbb{R}^n, ~z,  \lambda \in \mathbb{R},
\end{equation}
where $N = (0, N^\prime) \in \mathbb{R}^{n}$ denotes the outward unit normal to $\Omega$ on $\partial\Omega = \mathbb{R} \times \partial \base$.  Note that since the coefficients in \eqref{main elliptic PDE} are independent of $x$, the full nonlinear problem  is invariant under axial translation.

Borrowing terminology from dynamical systems, we say a solution $(u,\lambda)$ of \eqref{main elliptic PDE} is \emph{homoclinic} if $u$ limits to a fixed function as $|x| \to \infty$, and we call it \emph{heteroclinic} provided $u$ has  distinct limits as $x \to \pm\infty$.  Beyond their intrinsic mathematical importance, equations of the form \eqref{main elliptic PDE} arise in a surprisingly diverse array of physical settings.  Of particular interest to us is their connection to traveling waves in nonlinear elasticity, mathematical biology, and especially hydrodynamics.   In those contexts, homoclinic solutions are referred to variously as \emph{pulses}, \emph{solitons}, or \emph{solitary waves}, while and heteroclinics correspond to \emph{fronts} or \emph{bores}.   Although the techniques we develop are equally well-suited to both these types of solutions, our emphasis will be on fronts because they are more difficult to construct.   An ulterior motive for this choice is that, in a forthcoming paper, we will present a global bifurcation theory for heteroclinics.

The unboundedness of $\Omega$ seriously complicates the task of finding these solutions.  For example, it is well-known that the relevant linearized operators fail to be Fredholm in unweighted H\"older spaces, which precludes the direct application of bifurcation theoretic techniques. For semilinear problems,  monotonicity methods have proven to be effective; see, for example, Berestycki and Nirenberg \cite{berestycki1992fronts}, A. Volpert, V. Volpert, and V. Volpert \cite{volpert1994book}, and the references therein.  By contrast, in the quasilinear setting, the predominant approach is to reformulate \eqref{main elliptic PDE} as a \emph{spatial} dynamical system (that is, treating $x$ as an evolution variable), and use infinite-dimensional invariant manifold theory.  Seeking small bounded solutions, we might hope to construct a finite-dimensional center manifold and study the bounded orbits of a reduced equation there. Beginning with the pioneering work of Kirchg\"assner \cite{kirchgassner1982wavesolutions} and Mielke \cite{mielke1986reduction,mielke1988reduction} in the 1980s, this basic strategy has been built upon and applied to great effect by many authors; see, for example, the book of Haragus and Iooss \cite{haragus2011book} for historical overview or \cite{dias2003handbook} for applications to water waves.   

While the Mielke--Kirchg\"assner approach is quite general and very powerful, it is not perfectly suited to every problem.  For many systems, such as \eqref{main elliptic PDE}, the reformulation as an evolution equation contorts the PDE in an unnatural way.  In particular, accommodating nonlinear boundary conditions typically requires one or more implicit changes of dependent variables.  This is certainly possible to do, but it adds an additional layer of complexity to the already involved process of computing the reduced ODE.   More importantly, it obscures the relationship between the equation on the center manifold and the physical problem.  Another potentially limiting factor is that the above theory is formulated in relatively weak Sobolev spaces in the transversal variable $y$ due to its reliance on so-called optimal regularity estimates.   When studying elliptic PDEs, it is often  desirable to work directly in spaces of H\"older continuous functions.   

Recently, Faye and Scheel \cite{faye2016center} introduced an alternative technique that ameliorates some of these issues.  Rather than reformulate the problem as an evolution equation, they instead perform a delicate fixed point argument in exponentially weighted Sobolev spaces. This furnishes what they call a center manifold ``without a phase space.'' Indeed, the manifold is parameterized by the components of the solution in the kernel of the linearized operator rather than initial data.  This permits them to treat certain non-local problems --- which was their original intent --- and also greatly simplifies the arduous task of computing the reduced equation.  Unfortunately, the Faye--Scheel method is fundamentally restricted to semilinear problems, and it appears to be ill-adapted to H\"older spaces.  

As one of the main contributions of this paper, we present a new center manifold reduction theorem that is specialized to treat quasilinear elliptic problems of the form \eqref{main elliptic PDE}, as well as more general ones.   The analysis is conducted entirely in H\"older spaces and, like Faye--Scheel, the reduced equation can be computed with comparatively elementary methods.  For heteroclinic solutions, one must expand the reduction function to cubic order, and so these differences in complexity are especially salient.  A particularly attractive feature of this machinery is that one has the freedom to choose the projection involved in the definition of the center manifold.  For instance, when we study surface water waves, we can arrange for the reduced ODE to directly govern the free boundary.  In this way,  the physical context remains in view even as we restrict to the center manifold.  

The most technically challenging part of constructing a center manifold invariably involves solving a fixed point  problem in weighted spaces and then verifying that the solution depends smoothly on the parameters.  For this, we are deeply indebted to a paper of Amick and Turner \cite{amick1994center}, where bounds and Fr\'echet differentiability of superposition operators in exponentially weighted H\"older spaces is painstakingly worked out.   
In fact, these authors developed their own center manifold reduction based on the above estimates and a point-wise in $x$ spectral splitting approach.  We use their ideas to construct a preliminary center manifold, and then reconfigure it in the style of Faye and Scheel, obtaining the simplified expansion procedure and freedom of projection choice.  


The second part of the present paper consists of three nontrivial applications of our center manifold reduction theorem. These problems were selected both for their physical significance and to illustrate different aspects of the methodology.   First, we prove the existence of homoclinic and heteroclinic solutions to the anti-plane shear equations from nonlinear elasticity.   Second, we verify the existence of slow moving fronts in a two-dimensional Fisher--KPP system with absorbing boundary conditions.

Finally, and most substantially, we construct small rotational bores in a channel.  These are heteroclinic solutions of the full two-dimensional incompressible Euler equations, with two immiscible layers of constant density fluid separated by a free boundary.  A major novelty is that we allow for constant vorticity as well as critical layers.

\subsection*{Notation}  Here we record some notational conventions followed throughout the rest of the paper.  Let  $U \subset \R^n$ be a cylinder in dimension $n \geq 2$.  For $k \in \mathbb{N}$, $\alpha \in (0,1)$, $\mu \in \mathbb{R}$, and a function $f \in C^k(U)$, we define the exponentially weighted H\"older norm
\begin{align*}
  \n f_{C_\mu^{k+\alpha}(U)}
  := \sum_{\abs \beta \le k} \n{w_\mu \partial ^\beta f}_{C^0(U)}
  + \sum_{\abs \beta = k} \n{w_\mu |\partial^\beta f|_{\alpha} }_{C^0(U)},
\end{align*}
where $w_\mu(x) := \sech{(\mu x)}$ is an exponential weight function and $|f|_{\alpha}$ is the local H\"older seminorm
\begin{align*}
  |f|_{\alpha}(x,y) := \sup_{\substack{|(r,s)| < 1\\ (x+r, y+s) \in U}}
  \frac{\abs{f(x+r, y+s)-f(x,y)}}{\abs{(r,s)}^\alpha}.
\end{align*}
We denote by 
\[ C_\mu^{k+\alpha}(\overline{U}) := \left\{ f \in C^{k+\alpha}(\overline{U}) : \| f \|_{C_\mu^{k+\alpha}(U)} < \infty \right\}.\] 
Occasionally, we will also work with the space of uniformly bounded H\"older continuous functions $C_{\bdd}^{k+\alpha}(\overline{U})$, which is defined as $C_\mu^{k+\alpha}(\overline{U})$ with $\mu = 0$.  Since $U$ is unbounded, this a proper subset of the space 
$C^{k+\alpha}(\overline{U})$ of functions which are merely \emph{locally} H\"older continuous up to the boundary.  Finally, for $k \geq 1$, and $\mu$ and $\alpha$ as above, we define the homogeneous seminorm 
  \begin{align*}
   |f|_{\homholder_\mu^{k+\alpha}(U)}
  := \sum_{1 \leq \abs \beta \le k} \n{w_\mu \partial ^\beta f}_{C^0(U)}
  + \sum_{\abs \beta = k} \n{w_\mu |\partial^\beta f|_{\alpha} }_{C^0(U)},
\end{align*}
and say $f \in \homholder_\mu^{k+\alpha}(\overline{U})$ provided that $|f|_{\homholder_\mu^{k+\alpha}(\overline{U})} < \infty$.




\subsection{Statement of results}

Written as an abstract operator equation, the elliptic problem \eqref{main elliptic PDE} takes the form 
\begin{equation}
  \F(u, \lambda) = 0,\label{abstract nonlinear equation} 
\end{equation}
where 
\[ \F = (\F_1, \F_2) \colon C_\bdd^{2+\alpha}(\overline{\Omega})  \times \mathbb{R} \longrightarrow C_\bdd^{0+\alpha}(\overline{\Omega}) \times C_\bdd^{1+\alpha}(\partial \Omega).\]
By this convention, $\F_1$ represents the equation in the interior, whereas $\F_2$ corresponds to the boundary condition.  

It is well-known that families of ``long waves'' can be found bifurcating from ``trivial'' $x$-independent solutions at certain critical parameter values (often connected to so-called dispersion relations).  This intuition motivates the following structural assumptions on $\F$.  First, suppose that there exists a family of trivial solutions parameterized by $\lambda$; for simplicity, this can be stated as
\begin{equation}
  \F(0, \lambda) = 0 \qquad \textrm{for all } \lambda \in \mathbb{R}. \label{F trivial solution assumption} 
\end{equation}
We will study solutions near $(u,\lambda) = (0,0)$, which leads us to consider the linearized operator $L := \F_u(0,0)$.  We make two hypotheses on $L$.  First,
\begin{equation}
  L := \F_u(0,0)  \textrm{ is formally self-adjoint with a co-normal boundary condition.}  \label{F symmetry assumption} 
\end{equation}
Second, we make a spectral assumption on the transversal linearized operator
\[ L^\prime := L |_{C^{2+\alpha}(\overline{\base})} : C^{2+\alpha}(\overline{\base}) \to C^{0+\alpha}(\overline{\base}) \times C^{1+\alpha}(\partial \base),\]
which results from restricting $L$ to acting on $x$-independent functions. As $\base$ is a bounded and smooth domain in $\mathbb{R}^{n-1}$, standard elliptic theory ensures that the spectrum of $L^\prime$ consists of finite multiplicity eigenvalues $\nu_0 > \nu_1 > \cdots$ with  $\nu_k \to -\infty$ as $k \to \infty$.  Moreover, there is an orthonormal basis of $L^2(\base)$ comprised by the corresponding eigenfunctions $\{ \varphi_k \}_{k=0}^{\infty}$. Our final assumption is that 
\begin{equation}
  \label{lambda0 assumption}  \nu_0 = 0 \textrm{ is a simple eigenvalue}.
\end{equation} 
This is the sense in which  the parameter value $\lambda =0$ is critical. 


\begin{theorem}[Center manifold reduction] \label{reduction theorem} 
  Consider the quasilinear elliptic PDE \eqref{main elliptic PDE} posed on the infinite cylinder $\Omega$.  Assume that it has a family of trivial solutions \eqref{F trivial solution assumption}, its linearization satisfies \eqref{F symmetry assumption},  and that $\lambda = 0$ is a critical parameter value in that the corresponding transversal linearized problem has the spectral behavior \eqref{lambda0 assumption}.  Fix $\mu \in (0,\sqrt{|\nu_1|/2})$ and an integer $M \ge 2$. Then there exist neighborhoods $U \sub  C_\bdd^{2+\alpha}(\overline{\Omega}) \times \mathbb{R}$ and $V \sub  \mathbb{R}^3$ of the origin and a coordinate map $\Psi = \Psi(A,B,\lambda)$ satisfying
  \begin{align} \label{Psi flatness}
    \Psi \in C^{M+1}(\mathbb{R}^3 , C_\mu^{2+\alpha}(\overline{\Omega})),
    \qquad \Psi(0,0,\lambda) = \Psi_A(0,0,\lambda) = \Psi_B(0,0,\lambda) = 0 \text{ for all $\lambda$},
  \end{align}
  such that the following hold.
  \begin{enumerate}[label=\rm(\alph*)]
  \item Suppose that $(u, \lambda) \in U$ solves \eqref{main elliptic PDE}.  Then $v(x) := u(x,0)$ solves the second-order ODE 
    \begin{equation}
      \label{reduced ODE} v'' = f(v, v', \lambda)
    \end{equation}
    where $f \maps \R^3 \to \R$ is the $C^{M+1}$ mapping
    \begin{equation}
      f(A,B,\lambda) :=  \frac {d^2}{dx^2}\bigg|_{x=0} \Psi(A, B, \lambda)(x,0).
      \label{reduced f definition}
    \end{equation}
  \item \label{reduction theorem recovery} Conversely, if $v \colon \mathbb{R} \to \mathbb{R}$ satisfies the ODE \eqref{reduced ODE} and $(v(x), v'(x), \lambda) \in V$ for all $x$, then $v = u(\placeholder, 0)$ for a solution $(u,\lambda) \in U$ of the PDE \eqref{main elliptic PDE}.  Moreover, 
    \begin{equation}\label{FS expression} 
      u(x+\tau, y) = \frac{v(x)}{\varphi_0(0)} \varphi_0(y) + \frac{v'(x)}{\varphi_0(0)} \tau \varphi_0(y) + \Psi(v(x), v'(x), \lambda)(\tau,y),
    \end{equation}
    for all $\tau \in \mathbb{R}$.  Here, recall that $\varphi_0$ generates the kernel of $L^\prime$.  
  \end{enumerate}
\end{theorem}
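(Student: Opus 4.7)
The plan is a two-stage construction. In Stage 1, I would build a preliminary center manifold via an Amick--Turner style fixed-point argument in the weighted H\"older space $C^{2+\alpha}_\mu(\overline{\Omega})$, exploiting the spectral gap between $\nu_0 = 0$ and $\nu_1$. In Stage 2, I would use the translation invariance of \eqref{main elliptic PDE} in $x$ to reparametrize this manifold in the Faye--Scheel style in terms of the pointwise center coordinates $(A,B) = (v(x), v'(x))$, thereby producing $\Psi$ and the reduced ODE.

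For Stage 1, decompose $u = A \varphi_0(y)/\varphi_0(0) + B\,x\,\varphi_0(y)/\varphi_0(0) + w(x,y)$, where $w$ lives in a pointwise-in-$x$ spectral complement that isolates the hyperbolic modes of $L$. The choice $\mu \in (0, \sqrt{|\nu_1|/2})$ ensures that these modes correspond to decay rates $\sqrt{|\nu_k|} \geq \sqrt{|\nu_1|} > 2\mu$, so $w$ is naturally sought in $C^{2+\alpha}_\mu$. After introducing a smooth cutoff of the nonlinear terms for small $(A,B,\lambda)$, I would pose a fixed-point problem $w = T_{A,B,\lambda}(w)$, where $T$ inverts the hyperbolic part of $L$ against the cutoff nonlinearity. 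Linear solvability in the weighted space follows from explicit one-dimensional resolvents in $x$ for each $\varphi_k$-mode. Contraction and $C^{M+1}$-dependence on $(A,B,\lambda)$ rely on the exponentially weighted superposition estimates of Amick--Turner \cite{amick1994center}; this is the only place where the $C^{M+4}$ smoothness of $\A, \B, \G$ is used, the loss of derivatives being standard in such constructions.

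For Stage 2, choose the projection defining the hyperbolic complement so that its range is annihilated by the two pointwise functionals $f \mapsto f(0,0)$ and $f \mapsto f_x(0,0)$; this forces the normalizations $\Psi(A,B,\lambda)(0,0) = \partial_x \Psi(A,B,\lambda)(0,0) = 0$, which are exactly what is needed so that evaluating \eqref{FS expression} at $\tau = 0$, $y = 0$ and differentiating once in $\tau$ there yield the consistency relations $v(x) = u(x,0)$ and $v'(x) = u_x(x,0)$. The parameter-space flatness $\Psi(0,0,\lambda) = 0$ follows from the trivial family \eqref{F trivial solution assumption}, while $\Psi_A(0,0,\lambda) = \Psi_B(0,0,\lambda) = 0$ is immediate since $\Psi$ captures only the nonlinear correction beyond the explicit linear center terms. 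To prove part (a), take $\partial_\tau^2$ of \eqref{FS expression} at $(\tau,y) = (0,0)$: the linear-in-$\tau$ center contributions drop out and one obtains $v''(x) = \partial_\tau^2 \Psi(v(x), v'(x), \lambda)(0,0) = f(v(x), v'(x), \lambda)$. For part (b), given $v$ solving \eqref{reduced ODE} with $(v,v',\lambda)$ valued in $V$, define $u$ by \eqref{FS expression}; then translation invariance of \eqref{main elliptic PDE} combined with uniqueness in the Stage 1 fixed point forces $u$ to be well defined globally and to solve $\F(u,\lambda) = 0$.

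The hard part is the smoothness of the Stage 1 solution as a function of $(A,B,\lambda)$. The quasilinear coefficients induce Nemytskii operators acting on $(u, \nabla u)$, and showing that these admit $M+1$ parameter-derivatives with values in exponentially weighted H\"older spaces---without a catastrophic loss of regularity in $u$---is a delicate computation for which the Fr\'echet-differentiability results of \cite{amick1994center} are indispensable. A subsidiary issue is passing smoothly from the natural Amick--Turner parametrization (by an $L^2(\base)$-projection of $u(0,\cdot)$ and $u_x(0,\cdot)$ onto $\varphi_0$) to the pointwise Faye--Scheel parametrization by $(v(0), v'(0))$; this is a smooth finite-dimensional change of coordinates once the projection in Stage 1 has been tuned as above, but arranging the cutoff so that bounded solutions of \eqref{reduced ODE} indeed correspond to bounded solutions of \eqref{main elliptic PDE} (rather than merely to solutions of a modified problem) requires care.
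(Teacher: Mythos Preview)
Your proposal is correct and follows essentially the same two-stage architecture as the paper: an Amick--Turner fixed-point construction in $C^{2+\alpha}_\mu$ followed by a Faye--Scheel reparametrization to pointwise coordinates, with the reduced ODE obtained by differentiating \eqref{FS expression} twice in $\tau$ at $(\tau,y)=(0,0)$. The one organizational difference is that the paper carries out Stage~1 entirely in the natural Amick--Turner coordinates $\xi_i = \int_{\Omega'} \partial_x^{i-1} u(0,y)\varphi_0(y)\,dy$ (via an integro-differential system for $(U_1,U_2,R)$ rather than a single equation for $w$), and only afterward performs the near-identity change $\xi \mapsto (A,B)$ using flatness estimates on the $R$-component; you describe this change of coordinates accurately in your final paragraph, so the distinction is purely expository.
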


\begin{remark}\label{Phi remark}
  Setting $\tau=0$ in \eqref{FS expression} and normalizing $\varphi_0$ so that $\varphi_0(0)=1$, we obtain
  \begin{equation*}
    u(x, y) = v(x) \varphi_0(y) + \Phi(v(x),v'(x),\lambda,y),
  \end{equation*}
  where $\Phi \in C^{2+\alpha}(\R^4,\R)$ is given by
  \begin{align}\label{Phi asymptotics}
    \Phi(A,B,\lambda,y) := \Psi(A, B, \lambda)(0,y) = O\Big((\abs A + \abs B)(\abs A + \abs B + \abs\lambda)\Big).
  \end{align}
\end{remark}

Let us draw attention again to the fact that the ODE \eqref{reduced ODE} relates in a transparent way to the original PDE \eqref{main elliptic PDE}.  For example, when studying free boundary problems, we may pick coordinates on $\base$ so that the graph of $v$ parametrizes the interface.  

Another advantage of our approach --- which it inherits from Faye--Scheel --- is the comparative simplicity of deriving the reduced equation.  This can be seen in the next result, which says essentially that $\Psi$ in \eqref{Psi flatness} and $f$ in \eqref{reduced ODE} can be determined through a na\"ive formal asymptotic expansion.  

\begin{theorem}[Reduced equation] \label{expansion theorem} 
  In the setting of Theorem~\ref{reduction theorem}, the coordinate map $\Psi$ admits the Taylor expansion
  \begin{equation}
    \Psi(A,B, \lambda) = \sum_{\substack{2 \le i+j+k \leq M \\ i + j \ge 1}} \Psi_{ijk} A^i B^j \lambda^k + O\left( (|A| + |B|) (|A| + |B| + |\lambda|)^M \right) \qquad \mathrm{in}~ C_\mu^{2+\alpha}(\overline{\Omega}),\label{Psi taylor expansion} 
  \end{equation}
  where the coefficients $\Psi_{ijk}$ are the unique functions in $C_\mu^{2+\alpha}(\overline{\Omega})$ that satisfy 
  \begin{enumerate}[label=\rm(\roman*)]
  \item \label{expansion theorem projection} $\Psi_{ijk}(0,0) = \partial_x \Psi_{ijk}(0,0) = 0$.
  \item \label{expansion theorem Gateaux} For all $i+j+k \le M$, the formal G\^{a}teaux derivative 
    \begin{equation}\label{Gateaux derivative}
      \partial^i_A \partial^j_B \partial^k_\lambda \Big|_{(A,B,\lambda) = 0} \F \left(A\varphi_0 + Bx \varphi_0 + \Psi(A,B,\lambda) \right) = 0.
    \end{equation}
  \end{enumerate}
\end{theorem}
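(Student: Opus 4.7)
The plan is to establish the Taylor expansion, derive each of the two listed conditions from the conclusions of Theorem~\ref{reduction theorem}, and then show that together they pin down the coefficients uniquely. Since $\Psi \in C^{M+1}(\R^3, C_\mu^{2+\alpha}(\overline\Omega))$, Taylor's theorem at the origin yields an expansion with coefficients $\Psi_{ijk} := \frac{1}{i!\,j!\,k!}\,\partial_A^i \partial_B^j \partial_\lambda^k \Psi(0) \in C_\mu^{2+\alpha}(\overline\Omega)$ and a classical remainder of order $(|A|+|B|+|\lambda|)^{M+1}$; the flatness \eqref{Psi flatness} forces $\Psi_{00k} = \Psi_{10k} = \Psi_{01k} = 0$, so the sum restricts to $i+j \ge 1$, and also lets one factor a power of $|A|+|B|$ out of the remainder, giving the sharper estimate in \eqref{Psi taylor expansion}.

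For condition~\ref{expansion theorem projection}, I would evaluate \eqref{FS expression} at $(\tau, y) = (0, 0)$ under the normalization $\varphi_0(0) = 1$, which yields $u(x, 0) = v(x) + \Psi(v(x), v'(x), \lambda)(0,0)$. Combined with $v(x) = u(x, 0)$ this gives $\Psi(A, B, \lambda)(0, 0) = 0$ whenever $(A, B, \lambda)$ is realized as $(v(x), v'(x), \lambda)$ for some admissible $v$, and by part~\ref{reduction theorem recovery} of Theorem~\ref{reduction theorem} every point of $V$ arises this way as an initial condition of \eqref{reduced ODE}, so the identity in fact holds on a full neighborhood of $0$ in $\R^3$. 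Matching coefficients of $A^i B^j \lambda^k$ then gives $\Psi_{ijk}(0, 0) = 0$, and applying $\partial_\tau$ before evaluating at $(\tau, y) = (0, 0)$ similarly forces $\partial_x \Psi_{ijk}(0, 0) = 0$.

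For condition~\ref{expansion theorem Gateaux}, the same appeal to part~\ref{reduction theorem recovery} shows that the right-hand side of \eqref{FS expression} (evaluated with $x = 0$) is, for every $(A, B, \lambda) \in V$, a genuine solution of \eqref{main elliptic PDE}, so the composition $(A, B, \lambda) \mapsto \F(A\varphi_0 + B\tau\varphi_0 + \Psi(A, B, \lambda), \lambda)$ vanishes identically on $V$; iterated G\^{a}teaux differentiation at the origin via the chain rule then produces \eqref{Gateaux derivative}. Some care is required because the affine piece $A\varphi_0 + B\tau\varphi_0$ does not lie in $C_\bdd^{2+\alpha}$, but the G\^{a}teaux derivatives are well defined pointwise and that is the sense in which \ref{expansion theorem Gateaux} is to be read.

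Finally, uniqueness proceeds by induction on $N = i + j + k$. In the Fa\`a di Bruno expansion of \eqref{Gateaux derivative} at order $N$, the only contribution involving $\Psi_{ijk}$ itself is the term $(i!\,j!\,k!)\, L\Psi_{ijk}$ coming from $\F_u(0) = L$ acting on the order-$N$ Taylor jet of $\Psi$; every other term is polynomial in $\{\Psi_{i'j'k'}\}_{i'+j'+k'<N}$, $\varphi_0$, $x\varphi_0$, and higher Fr\'echet derivatives of $\F$ at the origin. If $\hat\Psi_{ijk}$ is any other family satisfying both \ref{expansion theorem projection} and \ref{expansion theorem Gateaux}, the inductive hypothesis equates the lower-order contributions and leaves $L(\hat\Psi_{ijk} - \Psi_{ijk}) = 0$ together with the two pointwise constraints from \ref{expansion theorem projection}. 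The main obstacle, and the reason those constraints are imposed, is to secure injectivity of $L$ on the subspace $\{f \in C_\mu^{2+\alpha}(\overline\Omega) : f(0, 0) = \partial_x f(0, 0) = 0\}$: the center modes $\varphi_0$ and $x\varphi_0$ do not lie in $C_\mu^{2+\alpha}$ because they fail to decay, no non-trivial superposition of hyperbolic modes decays at both ends of the cylinder, and the two pointwise conditions absorb the residual parametrization freedom. This injectivity should be supplied by the linear theory developed in the paper's construction of the preliminary center manifold.
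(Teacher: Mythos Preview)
Your overall strategy --- Taylor expand $\Psi$, verify (i) and (ii), then prove uniqueness by induction using the linear theory --- matches the paper's, and your uniqueness argument is essentially the paper's: Lemma~\ref{lem bordering} gives invertibility of the bordered operator $(L,\FSproj)$ on $\Xspace_\mu$, which is exactly the injectivity statement you need.

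The gap is in your derivation of (i) and (ii). You invoke Theorem~\ref{reduction theorem}~\ref{reduction theorem recovery} to conclude that for \emph{every} $(A,B,\lambda)\in V$ the function $A\varphi_0+B\tau\varphi_0+\Psi(A,B,\lambda)$ is a solution of the PDE, but the hypothesis of~\ref{reduction theorem recovery} is that the \emph{entire} ODE trajectory $\{(v(x),v'(x),\lambda):x\in\R\}$ remain in $V$, not merely that the initial data lie there. For generic $(A,B)$ near a saddle-type rest point this fails, so the set of $(A,B,\lambda)$ on which your identities hold need not contain any open neighborhood of the origin, and you cannot differentiate to extract the Taylor coefficients. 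The paper circumvents this entirely: in its proof of Theorem~\ref{reduction theorem}, the map $\Psi$ is \emph{constructed} so that $\FSproj\Psi(A,B,\lambda)=0$ and $\F^r\big(A\varphi_0+Bx\varphi_0+\Psi(A,B,\lambda),\lambda\big)=0$ hold for \emph{all} $(A,B,\lambda)\in\R^3$, where $\F^r$ is a truncated version of $\F$ that is genuinely Fr\'echet-smooth as a map $\Xspace_\mu\times\R\to\Yspace_{(M+6)\mu}$. Condition~(i) is then immediate from $\FSproj\Psi=0$, while~(ii) follows by Fr\'echet-differentiating the $\F^r$ identity and invoking Lemma~\ref{lem cutoff F} to identify the derivatives of $\F^r$ at the origin with those of $\F$. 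This also cleanly resolves the issue you flag about $A\varphi_0+B\tau\varphi_0\notin C_\bdd^{2+\alpha}$: one never composes $\F$ with that unbounded argument, only $\F^r$.
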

\begin{remark}\label{Gateaux remark}
  By introducing an appropriate cut-off function, we may consider the G\^{a}teaux derivative of $\F$ in \eqref{Gateaux derivative} as the Fr\'echet derivative of a modified $\F$. In practice, however, this distinction is unimportant when using \eqref{Gateaux derivative} to calculate the $\Psi_{ijk}$. Further details can be found in Lemma~\ref{lem cutoff F} and Section~\ref{general strategy sec}. 
\end{remark}
\begin{remark}\label{projection remark}
  As mentioned in the introduction, we actually have considerable freedom in choosing the linear relationship $v = \mathcal V u$ between the original unknown $u$ and the quantity $v$ governed by the reduced ODE \eqref{reduced ODE} in Theorem~\ref{reduction theorem}. Like Faye and Scheel \cite{faye2016center}, we have found pointwise evaluation $\mathcal V u(x) := u(x,0)$ to be the most convenient for calculations, but our proofs also apply to, for instance,
  \begin{equation}
    \label{choice for v}
    \mathcal Vu(x) := \int_\base u(x,y)\, dy 
    \quad 
    \text{or}
    \quad
    \mathcal Vu(x) := \int_{x-1}^{x+1}\int_\base u(s,y)\, dy\, ds. 
  \end{equation}
  Besides slightly alterning the very final step in the proof of Theorem~\ref{expansion theorem} in Section~\ref{proof of center manifold section}, the only other modification is that Theorem~\ref{expansion theorem}~\ref{expansion theorem projection} becomes $\FSproj \Psi_{ijk} = 0$, where the operator 
  \begin{align*}
    \FSproj w(x,y) := \frac{\mathcal V u(0)}{\varphi_0(y)} \varphi_0(y)
    + \frac{\mathcal Vu'(0)}{\varphi_0(y)} x \varphi_0(y)
  \end{align*}
  is a bounded projection from $C^{2+\alpha}_\mu(\overline \Omega)$ onto the kernel of $L$, here thought of as a mapping between weighted H\"older spaces.
\end{remark}

We also obtain the following theorem relating the linearized problem at any small non-trivial solution of the PDE \eqref{main elliptic PDE} to the linearization of the reduced ODE \eqref{reduced ODE}.

\begin{theorem}[Linearization and reduction] \label{nondegeneracy theorem} In the setting of Theorem~\ref{reduction theorem}~\ref{reduction theorem recovery}, if $\dot u \in C_\bdd^{2+\alpha}(\overline{\Omega})$ is a solution to the linearized PDE 
\[ \F_u(u,\lambda) \dot u = 0, \]
then $\dot v := \dot u(\placeholder, 0)$ satisfies the linearized reduced ODE
\begin{equation}
  \dot v^{\prime\prime} = f_{(A,B)}(v, v^\prime,\lambda) \cdot \left( \dot v, \dot v^\prime\right).
  \label{linearized reduced ODE}
\end{equation}
\end{theorem}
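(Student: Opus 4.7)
The plan is to linearize the bijective correspondence between small PDE solutions in $U$ and small reduced-ODE solutions supplied by Theorem~\ref{reduction theorem}. I will set $\dot v(x) := \dot u(x,0)$ and let $\tilde v$ denote the unique solution of the linear second-order ODE $\tilde w'' = f_{(A,B)}(v, v', \lambda) \cdot (\tilde w, \tilde w')$ with initial data $(\tilde v(0), \tilde v'(0)) = (\dot v(0), \dot v'(0))$; this is well-defined because $f$ is $C^{M+1}$. The goal is to show $\dot v = \tilde v$, from which \eqref{linearized reduced ODE} follows immediately.

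First I would realize $\tilde v$ as the derivative of a 1-parameter family of genuine reduced-ODE solutions. Classical ODE theory produces a smooth family $v_s$ of solutions of $v_s'' = f(v_s, v_s', \lambda)$ with perturbed initial data $(v_s(0), v_s'(0)) = (v(0) + s\dot v(0), v'(0) + s \dot v'(0))$ and $v_0 = v$; uniqueness for the linearized ODE then gives $\partial_s|_{s=0} v_s = \tilde v$.  For $s$ small enough that $(v_s(x), v_s'(x), \lambda)$ stays in $V$, part~\ref{reduction theorem recovery} of Theorem~\ref{reduction theorem} lifts each $v_s$ to a PDE solution $u_s \in U$, and differentiating at $s=0$ produces $\tilde u := \partial_s|_{s=0} u_s \in \kernel \F_u(u, \lambda) \cap C_\bdd^{2+\alpha}(\overline\Omega)$ with $\tilde u(\cdot, 0) = \tilde v$.

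The remaining and most delicate task is to prove $\dot u = \tilde u$. The difference $\hat u := \dot u - \tilde u$ lies in $\kernel \F_u(u, \lambda) \cap C_\bdd^{2+\alpha}(\overline\Omega)$ and satisfies $\hat u(0,0) = \partial_x \hat u(0,0) = 0$ by the matching initial data. The main obstacle will be to establish that this trace data $(\dot u(0,0), \partial_x \dot u(0,0)) \in \R^2$ uniquely determines a bounded kernel element, i.e.\ that $\kernel \F_u(u,\lambda) \cap C_\bdd^{2+\alpha}(\overline\Omega)$ is precisely two-dimensional. My approach would be to linearize the fixed-point problem used to construct $\Psi$ in the proof of Theorem~\ref{reduction theorem}: decomposing $\hat u = \hat A\, \varphi_0 + \hat B\, x\varphi_0 + \hat w$ via the projection $\FSproj$ from Remark~\ref{projection remark}, the vanishing trace data forces $(\hat A, \hat B) = (0, 0)$, while $\hat w$ solves a linear elliptic problem whose principal part differs from $L$ by a small operator (since $u$ is small in $U$). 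In the exponentially weighted space $C_\mu^{2+\alpha}(\overline\Omega)$, the same contraction estimates underlying the existence of $\Psi$ apply to this linearized problem, forcing $\hat w = 0$ and hence $\dot u = \tilde u$.
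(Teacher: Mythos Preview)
Your argument has a gap at the point where you lift the perturbed family $v_s$ back to PDE solutions. For $s \ne 0$ there is no reason for $(v_s(x), v_s'(x), \lambda)$ to remain in $V$ for \emph{all} $x \in \R$: if, say, $v$ is a heteroclinic orbit connecting two hyperbolic equilibria of \eqref{reduced ODE}, then a generic perturbation of the initial data pushes the trajectory off the connecting orbit and out of any fixed neighborhood in either forward or backward time. Without global containment in $V$, Theorem~\ref{reduction theorem}\ref{reduction theorem recovery} does not apply, $u_s \in \Xspace_\bdd$ does not exist, and the object $\tilde u = \partial_s|_{s=0} u_s$ is undefined as an element of $C_\bdd^{2+\alpha}(\overline\Omega)$. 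A second, related difficulty appears in your final step: the difference $\hat u = \dot u - \tilde u$ is only known to lie in $\Xspace_\bdd$, whereas the bordered-operator invertibility of Lemma~\ref{lem bordering} (and its small perturbations) is established on the weighted space $\Xspace_\mu$. You would first have to show that bounded kernel elements of $\F_u(u,\lambda)$ with $\FSproj \hat u = 0$ automatically decay exponentially, which is itself nontrivial for the perturbed operator.

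The paper sidesteps both issues by applying the center manifold machinery of Section~\ref{extensions section} to the \emph{augmented} diagonal system $\mathscr G(u,\dot u) := (\F(u,\lambda),\, \F_u(u,\lambda)\dot u)$. Because $\mathscr G$ is linear in $\dot u$, any bounded $\dot u$ may be rescaled to $\delta \dot u$ so that the pair $(u,\delta\dot u)$ lies in the small neighborhood where the augmented reduction applies; no nearby family of nonlinear solutions is ever constructed. The augmented reduction function $(\Phi,\Upsilon)$ takes values in $\Xspace_\mu \times \Xspace_\mu$ by construction, and the key identity $\Psi_{(A,B)}(A,B) = \Upsilon_{(\dot A,\dot B)}(A,B,0,0)$ is then verified entirely within $\Xspace_\mu$, where the perturbed bordered operator is invertible.
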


The above theorem allows us to, among other things, calculate the dimension of the kernel of $\F_u(u,\lambda)$ using only information about the planar system \eqref{linearized reduced ODE}.  Indeed, Theorem~\ref{nondegeneracy theorem} tells us that the linearizations of the PDE and reduced ODE are compatible in that uniqueness of bounded solutions to the latter implies invertibility properties for the former.

Analogous results to Theorem~\ref{nondegeneracy theorem} can be found in \cite[Theorem 4.1(ii)]{wheeler2013solitary} and \cite[Theorem 5.1(ii)]{chen2018existence}, for example.  There the authors must carefully linearize each step in the center manifold construction.  By contrast,  our proof of Theorem~\ref{nondegeneracy theorem} relies on a soft analysis argument that avoids this rather tedious process through an extension of Theorem~\ref{reduction theorem} to diagonal elliptic systems.

In the remainder of the paper, we use Theorem~\ref{reduction theorem} and Theorem~\ref{expansion theorem} to  construct homoclinic and heteroclinic solutions to three quasilinear elliptic problems arising in quite different physical settings.     This includes anti-plane shear equilibria for a nonlinear elastic model with live body forces, and slow-moving invasion fronts for a two-dimensional Fisher--KPP equation with reactive boundary conditions.  To keep the presentation here compact, we defer stating these results and discussing the relevant history until later.

Our last application is to water waves.  Specifically, we study a system consisting of two incompressible fluids at constant density governed by the Euler equations.  They are separated by a free boundary and confined to a infinitely long horizontal channel. Steady traveling solutions to this problem are often referred to as \emph{internal waves}, and they are observed frequently in coastal flows \cite{perry1965large}.  We prove the existence of several families of front-type internal waves, which in hydrodynamics are known as (smooth) bores.   From a physical standpoint, bores are interesting because they are a genuinely stratified phenomenon:  one can show that no bores exist in constant density fluids~\cite{wheeler2013solitary}.  As heteroclinic connections, they also require considerably more finesse to construct. 

 Numerical studies of bores have been carried out by a number of authors \cite{turner1988broadening,lamb1998conjugate,grue2000breaking}, but very few rigorous results are currently available.  The earliest work is due to Amick and Turner \cite{amick1989small}, who used a precursor to the center manifold reduction in \cite{amick1994center} to characterize all small bounded solutions to the system assuming the flow in each layer is irrotational.   Later, Mielke \cite{mielke1995homoclinic} obtained an analogous result by applying traditional spatial dynamics techniques.   Using direct fixed point arguments, Makarenko  \cite{makarenko1992bore} gave an alternative construction for small-amplitude bores in the same setting, and later studied the continuously stratified case \cite{makarenko1999conjugate}.  

We not only prove the existence of irrotational bores, but in addition allow constant vorticity in the upper layer.  In the latter case, many of these waves will have \emph{critical layers} --- a line of particles in the fluid that are moving with the same horizontal speed as the wave itself.  It is well-known that this can create interesting streamline patterns, such as the famous cat's eyes in the periodic setting~\cite{wahlen2009critical,ehrnstrom2010multiple,constantin2016critical,ribeiro2017beneath}.  We find many families of waves feature a striking ``half cat's eye''; see Figure~\ref{fig:cats} and Theorem~\ref{cats eye theorem}.  To the best of our knowledge, this configuration has never been observed before.  Indeed, it is commonly thought that surface solitary waves in constant density water can \emph{never} have critical layers. Our results show that the heuristic fails for internal fronts.  
\begin{figure}
  \centering
  \hspace*{3em} 
  \includegraphics[scale=1.1]{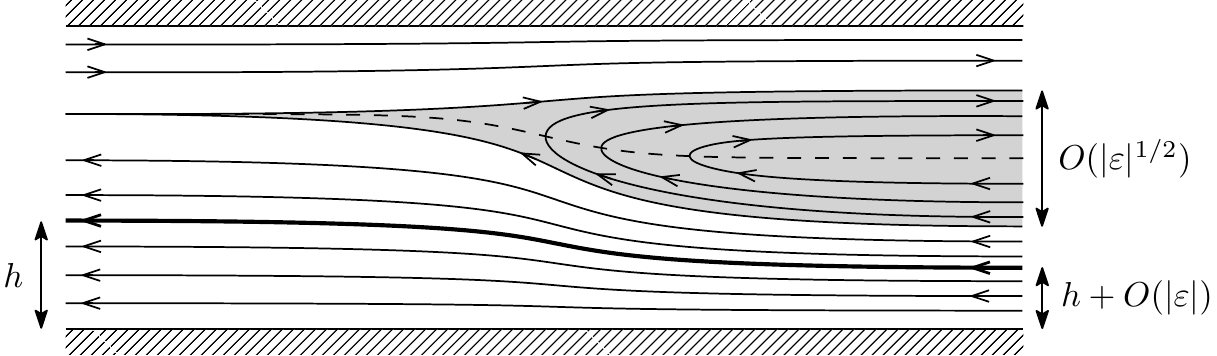}
  \caption{A smooth bore with a ``half cat's eye'' streamline pattern. The two fluid regions are bounded above and below by rigid walls and separated by a sharp interface (shown in bold). The dashed curve is the critical layer, above which particles move to the right and below which they move to the left (in the moving frame). Inside the shaded region (the ``eye''), the streamlines are bounded from the left and unbounded to the right, whereas outside they are unbounded in both directions.}
  \label{fig:cats}
\end{figure}

The analysis required for the water wave problem is several orders of magnitude more involved than the previous two examples.  It is here that the elegance of the expansion in Theorem~\ref{expansion theorem} and the choice of projection in the definition of the manifold are exploited most fully.  For instance, we are able to give a very simple proof that the free surface is monotonically decreasing and the streamlines have the expected pattern.  

An essential part of each of the above problems is identifying a parameter regime that admits front-type solutions.  For elasticity, we are able to exploit symmetry properties of the equation, whereas for the Fisher--KPP we take advantage of the robustness of the well-studied one-dimensional model.  Neither of these simplifications are available for water waves.  Instead, we make strong use of the theory of \emph{conjugate flows}; see Section~\ref{sec:conjugate}.  

\subsection{Plan of the article}

The proofs of Theorem~\ref{reduction theorem} and Theorem~\ref{expansion theorem} are carried out in Section~\ref{general theory section}.  First, in Section~\ref{linear theory section}, we establish some basic facts regarding the linear elliptic operator $L = \F_u(0,0)$.  Then, in Section~\ref{reformulation fixed point section}, the PDE is rewritten as a fixed point problem in the style of Amick--Turner.  Over the course of Section~\ref{at hypothesis section} and Section~\ref{truncation section}, we verify the hypotheses of that general theory, which yields a center manifold, but does not directly furnish the reduced equation for $v$ in \eqref{reduced ODE}.  In Section~\ref{proof of center manifold section}, we complete the proof using a near-identity change of variables to convert locally to the Faye--Scheel formulation, which gives us the liberty to choose the projection in the definition of the manifold, and also leads to the Taylor expansion \eqref{Psi taylor expansion}. 

For the benefit of the reader, Section~\ref{general strategy sec} contains a gentle explanation of the general strategy for actually computing the reduced equation and finding heteroclinic or homoclinic solutions.  While this is in principle deducible from \eqref{reduced ODE} and \eqref{Psi taylor expansion}, there are certain choices that are not immediately obvious but greatly simplify the process.  

In Section~\ref{extensions section}, we consider a number of extensions of Theorem~\ref{reduction theorem} and Theorem~\ref{expansion theorem} to other types of elliptic equations.  We also provide the proof of Theorem~\ref{nondegeneracy theorem}.  

The application to nonlinear elasticity can be found in Section~\ref{sec ap}, while Section~\ref{sec fkpp} contains our results on invasion fronts in two-dimensional Fisher--KPP.  We devote Section~\ref{sec ww} to proving the existence of internal bores with vorticity.  

Finally, two appendices are included.  Appendix~\ref{at appendix} provides a brief statement of Amick and Turner's fixed point theory that is sufficient for proving Theorem~\ref{reduction theorem}.   In Appendix~\ref{calculation appendix}, we collect some further details regarding the calculation of the reduced equations in the first two applications.  


\section{Center manifolds for quasilinear elliptic PDE on a cylinder} \label{general theory section}

Let us begin by fixing some notation.  Recalling that $\alpha \in (0,1)$ is the H\"older exponent introduced earlier, we set
\[ \Xspace := C^{2+\alpha}(\overline{\Omega}), \qquad \Yspace = \Yspace_1 \times \Yspace_2 := C^{0+\alpha}(\overline{\Omega}) \times C^{1+\alpha}(\partial \Omega). \]
Note that these are spaces of functions which are only \emph{locally} H\"older up to the boundary; the corresponding spaces of uniformly bounded functions will be designated with a subscript $\bdd$.  Likewise, for $\mu \geq 0$, we write $\Xspace_\mu$ and $\Yspace_\mu$ to indicate the associated exponentially weighted H\"older spaces.  

\subsection{Linear theory} \label{linear theory section} 
Recall that
\begin{align*} 
  \F_1(u, \lambda) &:= \nabla \cdot \A(y, u, \nabla u, \lambda)  + \B(y, u, \nabla u, \lambda), \\ 
  \F_2(u, \lambda) & :=  \big( \G(y, u, \nabla u, \lambda) \big)\big|_{\partial\Omega}. 
\end{align*} 
From this it is straightforward to compute that the linearized operator at $(u,\lambda) = (0,0)$ is given by 
\begin{equation}
 \label{linear form hypothesis}
  \begin{aligned}  
    \F_{1u}(0, 0)v &= \partial_x^2v + \nabla^\prime \cdot \left( a^\prime(y) \nabla^\prime v + b(y) v \right) - b(y) \cdot \nabla^\prime v + c(y) v, \\ 
    \F_{2u}(0,0) v  &= \Bigl(- N^\prime(y) \cdot \big( a^\prime(y) \nabla^\prime  v + b(y) v \big) +  g(y) v \Big)\Big|_{\partial \base}. 
  \end{aligned} 
\end{equation}
Here, $\nabla^\prime$ and $\nabla^\prime \cdot$ indicate the gradient and divergence in $y$, respectively.   As required by assumption \eqref{F symmetry assumption}, the linearized boundary condition is co-normal.   The coefficients are all of class $C_\bdd^{M+3}(\overline{\base})$, and related to the nonlinear problem via
\[ a^\prime(y) := \A_{p^\prime}(y,0, 0,0), \quad 
 b(y) :=  \B_{p^\prime}(y,0,0,0), \quad
c(y) := \B_z(y, 0, 0, 0), \quad 
g(y) := \G_z(y,0,0,0),\]
where we are adopting the notation $p = (p_1, p^\prime) \in \mathbb{R} \times \mathbb{R}^{n-1}$.  Without loss of generality, the coefficients have been normalized so that $a_{11} \equiv 1$.  The simplicity of the linearized operator is owed to the formal self-adjointness assumption \eqref{F symmetry assumption}.

From \eqref{linear form hypothesis}, we also see that the transversal linear operator $L^\prime = (L_1^\prime, L_2^\prime)$ has the explicit form 
\begin{equation}
 \label{transverse problem}
  \begin{aligned}
    L_1^\prime w &:=  \nabla^\prime \cdot \left( a^\prime(y) \nabla^\prime w + b(y) w\right) - b(y) \cdot \nabla^\prime w+ c(y) w, \\
    L_2^\prime w & := \left( -N^\prime(y) \cdot \left( a^\prime(y) \nabla^\prime w + b(y) w \right) + g(y) w \right)\Big|_{\partial \base}, 
  \end{aligned}
\end{equation}
for all $w = w(y) \in C^{2+\alpha}(\overline{\base})$.  
\subsubsection*{Projections}

Under the spectral assumption \eqref{lambda0 assumption}, there exists a continuous orthogonal projection $P_k^\prime$ onto the eigenspace corresponding to $\varphi_k$.  That is, any function $w = w(y)$ in $C^{0+\alpha}(\overline{\base})$ admits the unique representation
\[ w(y) = \sum_{k=0}^\infty \hat{w}_k \varphi_k(y) =: \sum_{k=0}^\infty (P_k^\prime w)(y), \qquad \hat{w}_k := \left(w, \varphi_k \right)_{L^2(\Omega')}.\]
It is also important to mention that there is a variational characterization of our spectral assumption \eqref{lambda0 assumption} in terms of the Rayleigh quotient defined by  
\[ \mathcal{R}(w) := \frac{ \int_{\base} \left( -a(y) \nabla^\prime w \cdot \nabla^\prime w + c(y) w^2 \right) \, dy + \int_{\partial \base} g(y) w^2 \, dS(y)}{ \int_{\base} w^2 \, dy },\]
for all $w \in H^1(\base) \setminus \{0\}$.  It is easy to verify that any critical point of $\mathcal{R}$ is in the kernel of $L^\prime$.  By classical elliptic theory, we have further that
\begin{equation}
  \max_{\substack{w \in H^1(\base) \\ w \not\equiv 0}} \mathcal{R}(w) = \nu_0 = 0, \qquad \max_{\substack{w \in P^\prime_{\geq 1} H^1(\base) \\ w \not\equiv 0}} \mathcal{R}(w) = \nu_1 < 0,  \label{variational characterization} 
\end{equation}
where $P_{\geq 1}^\prime := 1 - P_0^\prime$.

We can therefore introduce a projection $P_k$ defined on functions $u = u(x,y)$ by 
\[ (P_k u)(x,y) := P_k^\prime u(x, \cdot).\]
Arguing as in \cite[Chapter 9, Lemma 3.4]{volpert2011book1}, one can confirm that $P_k$ is a bounded projection on $C^{0+\alpha}(\overline{\Omega})$, and that we are justified in writing
\begin{equation}
  u(x,y) = \sum_{k=0}^\infty (P_k u)(x,y) =: \sum_{k = 0}^\infty \hat u_k(x) \varphi_k(y).\label{eigenfunction expansion} 
\end{equation}
 Continuing the above convention, let $P_0$ denote the projection point-wise in $x$ onto the $0$ eigenvalue for the transverse problem \eqref{transverse problem} and set 
\[ P_{\geq 1} := 1-P_0 = \sum_{k \geq 1} P_k.\]

\subsubsection*{Boundedness of the partial Green's function}

In this section, we seek to understand the solvability properties of the linearized problem $Lu = f$ for $f$ lying in an exponentially weighted H\"older space.  

By hypothesis, $L^\prime$ has a kernel.  Consider next the kernel of $L|_{\Xspace_\mu}$ for $\mu \in (0, \sqrt{|\nu_1|})$.  Suppose that $L u = 0$ for some $u \in \Xspace_\mu$.  It follows from the representation formula \eqref{eigenfunction expansion} that $\hat u_k$ satisfies the ODE
\[ \partial_{x}^2 \hat u_k = -\nu_k \hat u_k, \qquad \textrm{for all } k \geq 0. \]
Recalling that $\nu_k < 0$ for $k \geq 1$, this ensures that $\hat u_k$ grows exponentially as $x \to \infty$ with rate $\sqrt{|\nu_k|}$.  Thus, when $\mu \in (0,\sqrt{|\nu_1|})$, it must be that $\hat u_k \equiv 0$ for $k \geq 1$, and hence the kernel of $L|_{\Xspace_\mu}$ must lie in $P_0 \Xspace_\mu$.  Simply inspecting the operator $L$, we see immediately that $u$ must then take the form 
\[ u(x,y) = A \varphi_0(y) +  B x \varphi_0(y),\]
for some $A$, $B \in \mathbb{R}$.

In summary, we have proved the following.

\begin{lemma}[Kernel]\label{lem kernel}  For all $\mu \in (0, \sqrt{|\nu_1|})$, 
\[ \kernel{L|_{\Xspace_\mu}} = \left\{  (x,y) \mapsto A \varphi_0(y) + B x \varphi_0(y) :  A, B \in \mathbb{R} \right\}.\]
\end{lemma}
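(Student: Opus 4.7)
The plan is to establish the two inclusions separately. The forward inclusion is a direct calculation: for any $A, B \in \R$, the function $u(x,y) := (A+Bx)\varphi_0(y)$ lies in $\Xspace_\mu$ because the weight $\sech(\mu x)$ decays exponentially and so dominates linear growth, and applying the explicit formulas in \eqref{linear form hypothesis} together with $L_1'\varphi_0 = \nu_0\varphi_0 = 0$ and $L_2'\varphi_0 = 0$ gives
\[ L_1 u = \partial_x^2 u + (A+Bx)L_1'\varphi_0 = 0, \qquad L_2 u = (A+Bx)L_2'\varphi_0\big|_{\partial \base} = 0. \]

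For the reverse inclusion, which is the substantive part, I would take $u \in \Xspace_\mu$ with $Lu = 0$ and invoke the eigenfunction expansion \eqref{eigenfunction expansion}, writing $u(x,y) = \sum_{k\geq 0} \hat u_k(x)\varphi_k(y)$ with coefficients $\hat u_k(x) := (u(x,\cdot),\varphi_k)_{L^2(\base)}$. Since $u \in C^{2+\alpha}(\overline\Omega)$, differentiating under the integral places each $\hat u_k$ in $C^2(\R)$. The crucial step is to project the PDE onto the $k$-th transversal eigenmode by taking an $L^2(\base)$ inner product of $L_1 u = 0$ against $\varphi_k$ and integrating by parts twice in $y$. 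The boundary contributions on $\partial \base$ vanish because $u$ satisfies the co-normal condition $L_2 u = 0$ and $\varphi_k$ satisfies $L_2'\varphi_k = 0$, and the formal self-adjointness in \eqref{F symmetry assumption} transfers $L_1'$ onto $\varphi_k$, yielding
\begin{equation*}
  \hat u_k''(x) + \nu_k \hat u_k(x) = 0 \qquad \text{for all } x \in \R,\ k \geq 0.
\end{equation*}
Equivalently, the bounded projection $P_k$ commutes with $\partial_x^2$ and respects the co-normal structure, so $L P_k u = 0$ and the same ODE drops out by inspection.

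For $k = 0$, $\nu_0 = 0$ forces $\hat u_0(x) = A + Bx$ for some $A, B \in \R$. For $k \geq 1$, the general solution
\[ \hat u_k(x) = a_k \cosh\!\bigl(\sqrt{|\nu_k|}\,x\bigr) + b_k \sinh\!\bigl(\sqrt{|\nu_k|}\,x\bigr) \]
grows at rate $\sqrt{|\nu_k|}$ unless $a_k = b_k = 0$. Since $u \in \Xspace_\mu$ forces $\sup_{x\in\R}|\sech(\mu x)\,\hat u_k(x)| < \infty$ and $\mu < \sqrt{|\nu_1|} \le \sqrt{|\nu_k|}$ for every $k \geq 1$, we must have $\hat u_k \equiv 0$ for all such $k$. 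Thus $u = (A+Bx)\varphi_0$, completing the reverse inclusion.

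The main point that requires care is the projection step: one needs to justify that the termwise $L^2(\base)$ projection of the PDE really does reduce to the stated family of ODEs, and that each $\hat u_k$ has enough regularity in $x$ for the ODE to hold pointwise. Both are routine given the co-normal boundary condition, the formal self-adjointness of $L'$, and the boundedness of $P_k$ on the relevant H\"older spaces cited just before the statement (following \cite[Chapter 9, Lemma 3.4]{volpert2011book1}). No fine spectral analysis is needed; the entire argument is driven by the interplay between the exponential rate $\mu$ of the weight and the spectral gap $\sqrt{|\nu_1|}$.
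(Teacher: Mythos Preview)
Your proposal is correct and follows essentially the same approach as the paper: project onto the transversal eigenfunctions to obtain the ODEs $\hat u_k'' + \nu_k \hat u_k = 0$, then use the growth rate $\sqrt{|\nu_k|} \ge \sqrt{|\nu_1|} > \mu$ to kill the $k \ge 1$ modes. Your write-up is simply more explicit than the paper's, which compresses the argument into a short paragraph preceding the lemma and omits the (trivial) forward inclusion.
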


One consequence of the above lemma is that composing with the projection $P_{\geq 1}$ eliminates the kernel of $L|_{\Xspace_\mu}$.  Before considering the inhomogeneous problem for $L$, it will therefore be useful to define a projection on $\Yspace$ (which is then inherited by $\Yspace_\mu$) that agrees in a natural way with $P_0$.  For  $v = (v_1, v_2) \in \Yspace$, let   
\[ (Q_0 v)(x,y) := \left( (P_0 v_1)(x,y) + \varphi_0(y) \int_{\partial \base} v_2(x,s) \varphi_0(s) \, dS(s),  \, 0 \right).\]
Thus $Q_0 \Yspace \subset (P_0 \Yspace_1) \times \{0\} \subset \Yspace$, and, for any $u \in \Xspace$,
\begin{align*}
Q_0 Lu & = Q_0( L_1 u   , \, L_2 u) = \left( P_0 L_1 u + \varphi_0(y) \int_{\partial \base} (L_2 u)(x, s) \varphi_0(s) \, dS(s), \, 0  \right).
\end{align*}
But, recalling the definition of $P_0$, we have 
\begin{align*} P_0 L_1 u &= P_0 \partial_x^2 u + P_0 L^\prime u = \varphi_0(y) \left( \int_{\base} (\partial_x^2 u)(x,s) \varphi_0(s) \, ds + \int_{\base} (L^\prime u)(x,s) \varphi_0(s) \, ds \right) \\
& = \varphi_0(y) \left( \int_{\base} (\partial_x^2 u)(x,s) \varphi_0(s) \,ds - \int_{\partial \base} (L_2 u)(x,s) \varphi_0(s) \, dS(s) \right).
\end{align*}
Combining this with the line above yields
\begin{equation}
  Q_0 L u = \left( \varphi_0(y) \int_{\base} (\partial_x^2 u)(x,s)\varphi_0(s) \, ds , \, 0 \right) = L P_0 u. \label{QL PL identity} 
\end{equation}
In keeping with the notation above, let $Q_{\geq 1} \colon \Yspace \to \Yspace$ be defined by 
\[ Q_{\geq 1}v := (1-Q_0) v \qquad \textrm{for all } v  \in \Yspace.\]

With $Q$ and $Q_{\geq 1}$ in hand, we now establish the following ``elementary'' fact about the solvability of $Lu = f$ when the data $f \in Q_{\geq 1} \Yspace_\mu$.

\begin{proposition}[Partial Green's function] \label{partial green lemma} For any $\mu \in [0, \sqrt{|\nu_1|/2})$ and $f = (f_1, f_2) \in \Yspace_\mu$ such that $P_{0} f_1 = 0$, there exists a unique $u \in P_{\geq 1} \Xspace_\mu$ such that $Lu = f$.  Moreover, 
\[ \| u \|_{\Xspace_\mu} \lesssim \| f \|_{\Yspace_\mu}, \]
with the implied constant above uniform in $\mu$ as $\mu \to 0$.  Equivalently, for all $\mu \in [0, \sqrt{|\nu_1|/2})$, 
\[ L|_{P_{\geq 1} \Xspace_\mu} \colon P_{\geq 1} \Xspace_\mu \to Q_{\geq 1} \Yspace_\mu \]
is invertible with bounded inverse $G \colon Q_{\geq 1} \Yspace_{\mu}  \to P_{\geq 1} \Xspace_\mu$ that we call the \emph{partial Green's function}.
\end{proposition}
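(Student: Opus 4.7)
My strategy is to diagonalize the problem using the transversal eigenbasis $\{\varphi_k\}$, reduce to a countable family of scalar ODEs on $\mathbb R$ that can be solved explicitly via convolution, and then lift the resulting mode-by-mode bounds to $\Xspace_\mu$ by means of translation-invariant Schauder estimates. Uniqueness is the cheapest part: any two candidates in $P_{\geq 1}\Xspace_\mu$ would differ by an element of $\kernel L|_{\Xspace_\mu}$, which by Lemma~\ref{lem kernel} is contained in $P_0 \Xspace_\mu$, so the difference must vanish.

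For existence, I would expand $u(x,y) = \sum_{k\ge 1}\hat u_k(x)\varphi_k(y)$ and decompose $f$ into modes by
\begin{equation*}
\hat f_{1,k}(x) := \int_\base f_1(x,s)\varphi_k(s)\,ds,
\qquad
\hat f_{2,k}(x) := \int_{\partial\base} f_2(x,s)\varphi_k(s)\,dS(s).
\end{equation*}
Using the Green identity for the co-normal boundary condition guaranteed by \eqref{F symmetry assumption}, together with $L'\varphi_k = \nu_k\varphi_k$ and $L_2'\varphi_k = 0$, testing $L u = f$ against $\varphi_k$ formally produces the decoupled scalar ODEs
\begin{equation*}
\hat u_k''(x) - \omega_k^2\,\hat u_k(x) \;=\; \hat f_{1,k}(x) + \hat f_{2,k}(x), \qquad k \geq 1,
\end{equation*}
where $\omega_k := \sqrt{|\nu_k|}>0$. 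The compatibility condition ($P_0 f_1 = 0$, or more precisely $Q_0 f = 0$ once $f_2$ is accounted for) kills the $k=0$ equation, and for each $k\ge 1$ the unique bounded solution is $\hat u_k = G_k * (\hat f_{1,k}+\hat f_{2,k})$ with Green's function $G_k(x) := -\tfrac{1}{2\omega_k}e^{-\omega_k|x|}$.

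To control the sum in a weighted norm, I would first establish a weighted $L^2$ estimate by testing each scalar ODE against $\sech^2(\mu x)\,\hat u_k$, integrating over $\R$, and summing in $k$ using Parseval. The commutator produced by the weight has size of order $\mu^2$ times $\sum_k \int \sech^2(\mu x)\,\hat u_k^2\,dx$, while the principal coercive term contributes $\sum_k \omega_k^2\int \sech^2(\mu x)\,\hat u_k^2\,dx$. Since $\omega_k^2 \geq |\nu_1|$ for all $k \ge 1$, the commutator can be absorbed precisely under the threshold $2\mu^2 < |\nu_1|$, which is the origin of the restriction $\mu < \sqrt{|\nu_1|/2}$. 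This yields
\begin{equation*}
\| w_\mu u\|_{H^1(\Omega)} \;\lesssim\; \| w_\mu f \|_{L^2(\Omega)\times L^2(\partial\Omega)},
\end{equation*}
with a constant that stays bounded as $\mu\to 0$.

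Finally, I would promote the $L^2$ bound to a $\Xspace_\mu$ bound by applying interior and boundary Schauder estimates of Agmon--Douglis--Nirenberg type for $L$ on the shifted unit cylinders $\Omega_n := (n-1,n+1)\times\base$. Translation invariance of $L$ in $x$ gives
\begin{equation*}
\| u\|_{C^{2+\alpha}(\Omega_n)} \;\lesssim\; \| f_1\|_{C^{0+\alpha}(\Omega_n)} + \| f_2\|_{C^{1+\alpha}(\partial\Omega_n \cap \partial\Omega)} + \| u\|_{L^2(\Omega_n)},
\end{equation*}
with a constant independent of $n$. Multiplying by $w_\mu(n)$, using that $w_\mu$ varies by a bounded multiplicative factor on each $\Omega_n$, and taking the supremum over $n$ then yields $\|u\|_{\Xspace_\mu}\lesssim \|f\|_{\Yspace_\mu}$. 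The main obstacle is this last coupling: combining the exponentially weighted but merely $L^2$-based spectral estimate with the pointwise H\"older topology while keeping the constant uniform down to $\mu = 0$. The two-scale argument, $L^2$ on the whole cylinder together with Schauder on bounded subcylinders, is what resolves it and avoids having to track H\"older seminorms through the eigenfunction expansion directly.
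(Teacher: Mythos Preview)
Your strategy---spectral decomposition in $y$, explicit Green's functions for the resulting scalar ODEs, then a Schauder bootstrap---is a genuinely different route from the paper, which instead localizes in $x$ via a partition of unity $\{\zeta^{(m)}\}$, solves each piece $Lu^{(m)} = \zeta^{(m)} f$ by Lax--Milgram on $P_{\geq 1} H^1(\Omega)$, proves exponential decay of $u^{(m)}$ away from $\supp\zeta^{(m)}$ through a half-cylinder energy inequality driven by the Rayleigh characterization of $\nu_1$, and then sums.

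There is, however, a real gap in your intermediate step. The estimate $\|w_\mu u\|_{H^1(\Omega)} \lesssim \|w_\mu f\|_{L^2(\Omega)\times L^2(\partial\Omega)}$ is vacuous for $f \in \Yspace_\mu$: the weighted function $w_\mu f$ is merely bounded, not square integrable, on the infinite cylinder (already at $\mu=0$ a generic bounded $f$ has $\|f\|_{L^2(\Omega)} = \infty$). When you test the $k$-th ODE against $\sech^2(\mu x)\hat u_k$ and Cauchy--Schwarz the right-hand side, you therefore obtain an infinite quantity and cannot close the argument. This is precisely the $L^2$-versus-$L^\infty$ mismatch on unbounded domains that the paper's partition of unity is engineered to resolve: each localized piece has compactly supported data, so the Lax--Milgram $H^1$ bound is finite, and the subsequent exponential tail estimates control the sum in $m$. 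Your approach could plausibly be repaired by replacing the global energy estimate with a pointwise-in-$x$, $L^2$-in-$y$ bound coming directly from the convolution $\hat u_k = G_k * (\hat f_{1,k}+\hat f_{2,k})$, but the summation in $k$ then requires genuine care (in particular the boundary coefficients $\hat f_{2,k}$ do not satisfy a Parseval identity on $\partial\base$), and as written you have established neither the a~priori bound nor the existence of $u$.
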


\begin{proof}  Fix $\mu$ as above and let $f \in \Yspace_\mu$ be given with $P_{0} f_1 = 0$.  Following the general strategy of \cite[Theorem 3.1]{amick1989small}, we introduce a smooth partition of unity $\{ \zeta^{(m)} \}_{m \in \mathbb{Z}}$ on $\mathbb{R}$ such that  
\[ \zeta \in C_{\even}^\infty(\mathbb{R}), \quad \supp{\zeta} \subset [-2,2], \quad \zeta = 1 \textrm{ on } [-1,1], \quad \zeta - \tfrac{1}{2} \textrm{ odd about } x=\tfrac{3}{2} \textrm{ on } [1,2 ],\]
and taking $\zeta^{(m)} := \zeta(\placeholder - 3m)$.  

For each $m \in \mathbb{Z}$, consider the cut-off problem 
  \begin{equation}
    L u^{(m)} = f^{(m)} := \zeta^{(m)} f.\label{cut-off PDE} 
  \end{equation}
Observe that, because the projectors are pointwise in $x$, 
\[ P_0 f_1^{(m)} = \zeta^{(m)} P_0 f_1 = 0.\]
Thus, $f^{(m)} \in Q_{\geq 1} \Yspace_\mu$, and the commutation identity \eqref{QL PL identity} implies that any solution $u^{(m)}$ of \eqref{cut-off PDE} necessarily lies in $P_{\geq 1} \Xspace_\mu$.  

As a starting point, we show that there exists weak solutions to \eqref{cut-off PDE} by introducing the Hilbert space $\Hspace := P_{\geq 1} H^1(\Omega)$ endowed with the standard $H^1$ inner product.  Letting $\mathscr{B}$ be the bilinear form associated to \eqref{cut-off PDE},
\begin{align*} \mathscr{B}[u,v] &:= \int_{\Omega} \left( -a  \nabla u \cdot \nabla v -  u (b \cdot \nabla^\prime v) + v (b\cdot \nabla^\prime u)  + c u v \right) \, dx \, dy + \int_{\partial \Omega} g u v \, dS, \end{align*}
for all $u, v \in \Hspace$, 
the weak formulation of \eqref{cut-off PDE} is 
\[ \mathscr{B}[u^{(m)}, \psi] = ( f_1^{(m)}, \psi )_{L^2(\Omega)} + (f_2^{(m)}, \psi )_{L^2(\partial \Omega)} \qquad \textrm{for all } \psi \in \Hspace. \]
Notice that, because $f^{(m)}$ is compactly supported, the right-hand side above does indeed represent an element of $\Hspace^*$ acting on $\psi$.   As the coefficients are $C_\bdd^{M+3}$, it is obvious that $\mathscr{B}$ is bounded. On the other hand, 
\begin{align*}
\mathscr{B}[u, u] &= \sum_{j, k = 1}^\infty \mathscr{B}[\hat u_j \varphi_j, \, \hat u_k \varphi_k] = -\sum_{k = 1}^\infty \nu_k^2 \|\hat u_k\|^2_{L^2(\mathbb{R})}.
\end{align*}
It follows from the variational characterization of $\nu_0 = 0$ in \eqref{variational characterization} that $\mathscr B$ is coercive on $\Hspace$, and thus Lax--Milgram implies that there exists a weak solution $u^{(m)} \in \Hspace$ to the cut-off problem \eqref{cut-off PDE} for each $m \in \mathbb{Z}$.   

We must now improve this to classical solutions and estimate their norm in $\Xspace_\mu$.  Let an integer $\ell \in \mathbb{Z}$ be given and put 
\[ \Xspace^{(\ell)}_\mu := C_\mu^{2+\alpha}([\ell, \ell+1] \times \overline{\base})  , \qquad \Yspace_\mu^{(\ell)} := C_\mu^{0+\alpha}([\ell, \ell+1] \times \overline{\base}) \times C_\mu^{1+\alpha}([\ell, \ell+1] \times \partial\overline{\base}).\]
The next stage of the argument involves deriving a priori estimates for $u^{(m)}$ in $\Xspace_\mu^{(\ell)}$.   This will follow by elliptic regularity theory, but first we must expand the class of admissible test  functions to all of $H^1(\Omega)$.  In particular, observe that if $\psi \in C_c^1(\overline{\Omega})$, we may use the splitting above to write 
\[ \psi = \hat \psi_0(x) \varphi_0(y) + \psi_{\geq 1} \in P_0 C_c^1(\overline{\Omega}) \oplus P_{\geq 1}C_c^1(\overline{\Omega}).\]
It is easy to verify that $\mathscr{B}[u^{(m)}, \placeholder]$ extends to a bounded linear functional on $C_c^1(\overline{\Omega})$, and indeed 
\begin{align*}
\mathscr{B}[u^{(m)}, \psi] &= \mathscr{B}[u^{(m)}, \psi_{\geq 1} ] = (f_1^{(m)}, \psi_{\geq 1})_{L^2(\Omega)} + ( f_2^{(m)}, \psi)_{L^2(\partial \Omega)} \\
& =  (f_1^{(m)}, \psi )_{L^2(\Omega)} + ( f_2^{(m)}, \psi)_{L^2(\partial \Omega)} - (f_1^{(m)}, \hat \psi_0 \varphi_0)_{L^2(\Omega)} \\
& = (f_1^{(m)}, \psi )_{L^2(\Omega)} + ( f_2^{(m)}, \psi)_{L^2(\partial \Omega)},
\end{align*}
since $P_0 f_1^{(m)} = 0$, by hypothesis.  Thus $u^{(m)}$ is a weak solution of \eqref{cut-off PDE} in the $H^1$ sense.

Let $\Omega^{(\ell)} := [\ell-1/4, \ell+5/4] \times \base$, which is a slight enlargement of the domain associated to $\Xspace_\mu^{(\ell)}$.  There are precisely two integers $m$ for which $\supp{\zeta^{(m)}}$ and $\Omega^{(\ell)}$ have non-empty intersection; as they are consecutive, let us call them $\tilde m$ and $\tilde m+1$. Conjugating \eqref{cut-off PDE} with the exponential weight $\sech{(\mu x)}$, and applying standard elliptic regularity theory on bounded domains (see, for example, \cite[Chapter 8]{gilbarg2001elliptic}), we infer that $u^{(m)} \in \Xspace_\mu^{(\ell)}$.  Moreover, it obeys the bound
\begin{equation}
 \| u^{(m)} \|_{\Xspace_\mu^{(\ell)}} \lesssim \left\{ 
  \begin{aligned} 
    &e^{-\mu |\ell| } \| u^{(m)} \|_{L^2(\Omega^{(\ell)})} & &\qquad \textrm{for } m \in \mathbb{Z} \setminus \{ \tilde m , \tilde m +1\} \\
  &e^{-\mu |\ell|} \| u^{(m)} \|_{L^2(\Omega^{(\ell)})} + \|f\|_{\Yspace_\mu} & &\qquad \textrm{for } m = \tilde m, \, \tilde m+1. \end{aligned} \label{u^m a priori bound} \right. 
\end{equation}
 Here, the constants are uniform in $\mu$, $\ell$, and $m$.  In order to complete the argument we must justify the convergence of the series $\sum_m u^{(m)}$ in $\Xspace_\mu$.  Looking at \eqref{u^m a priori bound}, it is apparent that this hinges on having sufficiently refined bounds on the $L^2(\Omega^{(\ell)})$ norm of $u^{(m)}$. 
 
  First, suppose that $m < \tilde m$, so that $L u^{(m)} = 0$ on $\Omega^{(\ell)}$.  In fact, this holds on the semi-infinite strip $(3m+2,\infty) \times \base$, and so we may apply elliptic regularity again to conclude that $u^{(m)} \in C^{2+\alpha}_\bdd$ on this set.  By construction, $u^{(m)}$ is also in $H^1(\Omega)$, so in particular this also ensures that $u^{(m)}$ and $\nabla u^{(m)}$ decay to $0$ as $x \to +\infty$.  
 
  We are therefore justified in taking the equation $L u^{(m)} = 0$, multiplying by $u^{(m)}$, and then integrating over the strip $(x, \infty) \times \base$.  This procedure yields the identity
  \begin{align*} \frac{1}{2} \partial_x \int_{\base} |u^{(m)}(x,y) |^2 \, dy &= \int_x^\infty \!\!\! \int_{\base} a^\prime(y) \nabla^\prime u^{(m)}(s,y) \cdot \nabla^\prime u^{(m)}(s,y) \, dy \, ds \\
  & \qquad - \int_x^\infty \!\!\! \int_{\base} c(y) |u^{(m)}(s,y)|^2 \, dy \, ds \\
  & \qquad - \int_x^\infty \!\!\! \int_{\partial\base} g(y) |u^{(m)}(s,y)|^2 \, dS(y) \, ds,
  \end{align*}
  which holds for all $x > 3m+2$.  Using the Rayleigh--Ritz characterization of $\nu_1$ in \eqref{variational characterization}, this furnishes the integro-differential inequality 
  \[ \frac{1}{2} \partial_x \| u^{(m)}(x, \placeholder)\|_{L^2(\base)}^2 \leq \nu_1 \int_x^\infty \| u^{(m)}(s, \placeholder) \|_{L^2(\base)}^2 \, ds \qquad \textrm{for all } x > 3m+2.  \]
  From this, we may further estimate that
  \[ -\int_x^\infty \| u^{(m)}(s, \placeholder) \|_{L^2(\base)}^2 \, ds \leq e^{\sqrt{2|\nu_1|} (3m+2-x)} \int_{3m+2}^\infty \| u^{(m)}(s, \placeholder) \|_{L^2(\base)}^2 \, ds, \]
  for all $x > 3m+2$.  Thus,
  \begin{equation}
    \| u^{(m)} \|_{L^2(\Omega^{(\ell)})} = \left(\int_{\ell-\frac{1}{4}}^{\ell + \frac{5}{4}} \| u^{(m)}(s, \placeholder) \|_{L^2(\base)}^2 \, ds\right)^{1/2} \lesssim e^{\sqrt{|\nu_1|/2} (3m-\ell)} \| u^{(m)} \|_{L^2(\Omega)},  \label{first u^m L^2 estimate} 
  \end{equation}
  where recall we have assumed $m < \tilde m$.  Now, $u^{(m)}$ is bounded in $L^2(\Omega)$ in terms of the data $f^{(m)}$ via Lax--Milgram.  Relating this back to $f$, we find that
  \[ \| u^{(m)} \|_{L^2(\Omega)} \lesssim e^{3 \mu |m| } \| f \|_{\Yspace_\mu} \qquad \textrm{for all } m \in \mathbb{Z}.\]
 Combining this with \eqref{first u^m L^2 estimate} yields
 \[ \| u^{(m)} \|_{L^2(\Omega^{(\ell)})} \lesssim e^{\sqrt{|\nu_1|/2} (3m-\ell) + 3\mu|m|} \| f \|_{\Yspace_\mu} \qquad \textrm{for all } m < \tilde m. \]
 
 The same type of reasoning applied to the case $m > \tilde m+1$ gives a similar bound.  For the exceptional values $m = \tilde m, \tilde m + 1$, we may use \eqref{u^m a priori bound}, so that in total 
\[ \| u^{(m)} \|_{L^2(\Omega^{(\ell)})} \lesssim e^{-\sqrt{|\nu_1|/2} |3m-\ell| + 3\mu|m|} \| f \|_{\Yspace_\mu} \qquad \textrm{for all } m \in \mathbb{Z}.\]
Returning to the preliminary a priori estimate \eqref{u^m a priori bound}, we can now conclude that
\[ \sum_{m = -M}^M u^{(m)} \longrightarrow u \text{ in $\Xspace_\mu^{(\ell)}$ as $M \to \infty$} \qquad \text{for all $\ell \in \mathbb{Z}$},\]
and 
\[ \| u \|_{\Xspace_\mu^{(\ell)}} \leq C \| f \|_{\Yspace_\mu},  \]
with a constant $C$ that depends only on $ \sqrt{|\nu_1|/2} -\mu$.  As $\ell$ on the left-hand side above is arbitrary, this gives the desired bounds on $G$.  
\end{proof}

In applications, it will often be convenient to use alternative projections onto the kernel of $L$.  For instance, looking at the statement of Theorem~\ref{reduction theorem}, we see that the coefficients $A$ and $B$ are found by evaluating $u$ and $u_x$ at $(0,0)$.    With that in mind, suppose that $\FSproj$ is a given bounded projection from $\Xspace_\mu$ to $\kernel{L}$ which is independent of $\mu$.  As in the partial Green's function analysis, we expect that $L$ is invertible on the kernel of $\FSproj$.  To make this precise, we adopt the approach of Faye--Scheel \cite{faye2016center} and considered a so-called ``bordered" operator where one appends $\FSproj$ to $L$.  The result is the following.
  
\begin{lemma}[Bordered operator]\label{lem bordering}
The bordered operator 
\begin{equation*}
  (L, \FSproj):\ \Xspace_\mu \longrightarrow \Yspace_\mu \by \kernel{L}, \qquad u \longmapsto (Lu,\ \FSproj u)
\end{equation*}
is invertible with a bounded inverse.
\end{lemma}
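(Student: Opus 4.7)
The plan is to establish injectivity and surjectivity of the bordered operator $(L,\FSproj)$ separately; bounded invertibility will then follow either by tracking the estimates directly or by an appeal to the open mapping theorem.

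Injectivity is immediate. If $Lu = 0$ and $\FSproj u = 0$, then Lemma~\ref{lem kernel} places $u \in \kernel{L|_{\Xspace_\mu}}$, and because $\FSproj$ is a projection onto $\kernel L$ it acts as the identity there, forcing $u = \FSproj u = 0$.

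For surjectivity, fix $(f,\phi) \in \Yspace_\mu \by \kernel L$ and split $f = Q_0 f + Q_{\geq 1} f$. The partial Green's function of Proposition~\ref{partial green lemma} produces a unique $u_1 := G Q_{\geq 1} f \in P_{\geq 1} \Xspace_\mu$ with $L u_1 = Q_{\geq 1} f$ and $\n{u_1}_{\Xspace_\mu} \lesssim \n{f}_{\Yspace_\mu}$. The complementary piece necessarily has the form $Q_0 f = (\tilde h(x)\varphi_0(y),\,0)$ for some $\tilde h \in C^{0+\alpha}_\mu(\R)$ controlled by $\n f_{\Yspace_\mu}$. Since $L'\varphi_0 = 0$, any $u_0 = \hat u_0(x)\varphi_0(y) \in P_0 \Xspace_\mu$ obeys $L u_0 = (\hat u_0''\varphi_0,\,0)$, so the equation $L u_0 = Q_0 f$ reduces to the scalar ODE $\hat u_0'' = \tilde h$ on $\R$. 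We take the particular solution $\hat u_0(x) := \int_0^x (x-t)\tilde h(t)\,dt$; using $|\tilde h(t)| \lesssim \cosh(\mu t)\n{\tilde h}_{C^{0+\alpha}_\mu(\R)}$, elementary integration estimates bound $\hat u_0$ and $\hat u_0'$ by $\cosh(\mu x)\n{\tilde h}_{C^{0+\alpha}_\mu(\R)}$ up to a constant depending on $\mu$, while the Hölder control on $\hat u_0'' = \tilde h$ is automatic. Thus $u_p := u_0 + u_1 \in \Xspace_\mu$ satisfies $L u_p = f$ with $\n{u_p}_{\Xspace_\mu} \lesssim \n{f}_{\Yspace_\mu}$.

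The residual two-parameter freedom in $u_p$ coincides with $\kernel L$, and we pin it down using $\FSproj$ by setting $u := u_p + \phi - \FSproj u_p$. Since $\phi - \FSproj u_p \in \kernel L$, we have $L u = L u_p = f$, and because $\FSproj$ acts as the identity on $\kernel L$ we obtain $\FSproj u = \FSproj u_p + (\phi - \FSproj u_p) = \phi$. Boundedness of $\FSproj$ finally delivers $\n{u}_{\Xspace_\mu} \lesssim \n{f}_{\Yspace_\mu} + \n{\phi}_{\Xspace_\mu}$, completing the construction; uniqueness of $u$ is ensured by the injectivity already proven. The main subtle point is the solvability of $\hat u_0'' = \tilde h$ in $C^{2+\alpha}_\mu(\R)$; this succeeds precisely because $\mu>0$ makes the weight $w_\mu = \sech(\mu x)$ decay at infinity, so $C^{k+\alpha}_\mu$ accommodates functions growing at exponential rate $\mu$ and is preserved under double integration. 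This accounts for the restriction to positive $\mu$ already imposed by Theorem~\ref{reduction theorem} and Proposition~\ref{partial green lemma}.
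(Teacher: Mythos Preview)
Your proof is correct and takes a genuinely different route from the paper. The paper's argument is abstract: it observes from Lemma~\ref{lem kernel} and Proposition~\ref{partial green lemma} that $L \colon \Xspace_\mu \to \Yspace_\mu$ is Fredholm of index $2$, notes that appending the two-dimensional constraint $\FSproj$ drops the index to $0$, and then combines this with injectivity (your same one-line argument) to conclude invertibility. By contrast, you build the inverse explicitly: solve $L u_1 = Q_{\geq 1} f$ via the partial Green's function, reduce $L u_0 = Q_0 f$ to the scalar ODE $\hat u_0'' = \tilde h$ and integrate by hand in $C^{2+\alpha}_\mu(\R)$, then correct by an element of $\kernel L$ to hit the target $\phi$. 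Your approach is more elementary in that it avoids Fredholm theory entirely, and it has the side benefit of making the surjectivity of $L$ itself visible (indeed, your $u_p$ already shows $\operatorname{coker} L = 0$, which is exactly what the paper needs to justify its index computation). The paper's approach is shorter and more robust to generalizations, since it does not require an explicit solution of the $P_0$ component. Your observation that the double-integration estimate works precisely because $\mu > 0$ is a nice touch and explains why the bordered operator is only claimed to be invertible on the weighted spaces.
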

\begin{proof}
From Lemma~\ref{lem kernel} and Proposition~\ref{partial green lemma}, we see that $L : \Xspace_\mu \to \Yspace_\mu$ has Fredholm index $2$.  A standard dimension counting argument shows that the bordered operator has Fredholm index $0$; see, for example, \cite[Lemma 4.4]{scheel2007morse}.  Now, if $u\in \Xspace_\mu$ satisfies $(Lu, \FSproj u)= 0$, then in particular $u \in \kernel L$. On the other hand, $\FSproj u = 0$, and so it must be that $u = 0$. Thus, the bordered operator is injective and Fredholm index $0$.  It follows that it is invertible, and the boundedness of its inverse is a consequence of Proposition~\ref{partial green lemma}.
\end{proof}

\subsection{Reformulation as a fixed point} \label{reformulation fixed point section}

Now, let us return to the full nonlinear problem.    The abstract operator equation \eqref{abstract nonlinear equation} can be rewritten as
\begin{equation}
  \label{nonlinear problem}
  L w = \nl(w, \lambda),  
\end{equation}
where $\nl = (\nl_1, \nl_2)$ is defined by
\[ \nl(w, \lambda) := L w - \F(w, \lambda).\]
Thus, $\nl$ is ``flat'' with respect to $(w, \lambda)$ in the sense that $\nl(0,0)=0$, $\nl_u(0, 0) = 0$, and $\nl_\lambda(0,0) = 0$.   The commutation identity \eqref{QL PL identity} permits us to perform a spectral decomposition in both the domain and codomain to rewrite \eqref{nonlinear problem} as the system
\begin{align*}
  \left\{
  \begin{aligned}
    L (P_0 u) &=  Q_0 \nl(P_0 u + P_{\ge 1} u,\lambda)\\
    L (P_{\ge 1} u) &=  Q_{\ge 1} \nl(P_0 u + P_{\ge 1} u,\lambda).
  \end{aligned}
  \right.
\end{align*}
Applying the partial Green's function $G$ of Proposition~\ref{partial green lemma} to the second equation then gives 
\begin{align*}
  P_{\ge 1} u =  G Q_{\ge 1} \nl(P_0 u + P_{\ge 1} u,\lambda),
\end{align*}
while, recalling the explicit form of $P_0$ and $L$,
we see that
\begin{align}
  \label{eqn:intme}
  \partial_x^2 P_0 u &= Q_0 \nl(P_0 u + P_{\ge 1} u,\lambda).
\end{align}
Integrating \eqref{eqn:intme} twice we get the full system
\begin{align*}
  \left\{
  \begin{aligned}
    P_0 u &= \xi_1 \varphi_0 + \int_0^x (Q_0 u)_x(s,y) \, ds \\
    P_0 u_x &= \xi_2 \varphi_0 + \int_0^x Q_0 \nl(P_0 u + P_{\ge 1} u,\lambda)(s,y) \, ds\\
    P_{\ge 1} u &=  G Q_{\ge 1} \nl(P_0 u + P_{\ge 1} u,\lambda).
  \end{aligned}
  \right.
\end{align*}
for some constants $\xi_1,\xi_2 \in \R$ (the ``initial data''). Introducing a parameter $\beta$ (representing a rescaling of the axial variable), defining
\begin{equation}
  \label{spectral splitting equation}
  (P_0 u)(x,y) =: U_1(x) \varphi_0(y),
  \qquad 
  (P_0 u_x)(x,y) =: \beta U_2(x) \varphi_0(y),
  \qquad 
  R := P_{\ge 1} u,
\end{equation}
and scaling $\xi_2$, 
we finally obtain the following integro-differential fixed-point equation in the spirit of Amick and Turner \cite{amick1994center}:
\begin{align}
  \label{eqn:fixed}
  \left\{
  \begin{aligned}
    U_1(x) &= \xi_1 + \beta \int_0^{x} U_2(s) \, ds \\
    U_2(x) &= \xi_2 + \frac 1\beta \int_0^x \!\!\! \int_{\base} \varphi_0(y) \nl_1(U_1\varphi_0 + R,\lambda)(s,y) \, dy\, ds \\
    & \qquad + \frac 1\beta \int_0^x \!\!\! \int_{\partial\base} \varphi_0(y) \nl_2(U_1\varphi_0 + R,\lambda)(s,y) \, dS(y)\, ds  \\
    R &=  G Q_{\ge 1} \nl(U_1\varphi_0 + R,\lambda).
  \end{aligned}
  \right.
\end{align}
In terms of regularity, we ultimately seek solutions of \eqref{eqn:fixed} with 
\[ (U_1,U_2,R) \in C_\bdd^{2+\alpha}(\mathbb{R}) \times C_\bdd^{1+\alpha}(\mathbb{R}) \times C_\bdd^{2+\alpha}(\overline{\Omega}) =: \atXspace_\bdd.\]
Unraveling definitions, this will imply that $u \in \Xspace_\bdd$.  In view of Lemma~\ref{partial green lemma}, define $\overline{\mu} := \sqrt{|\nu_1|/2}.$   In order to obtain a fixed point, we cannot work directly in $\atXspace_\bdd$, but must instead consider the problem posed in the corresponding exponentially weighted space $\atXspace_\mu$ for $\mu \in (0,\overline{\mu})$.  


\subsection{Analysis of the nonlinear term} \label{at hypothesis section}
We wish to eventually apply the fixed-point theorem for systems of the type \eqref{eqn:fixed} given by Amick and Turner, which is recalled in Appendix~\ref{at appendix}.  Towards that end, it is necessary to look more closely at the form of the nonlinear terms $\nl$.  

Splitting into bulk and boundary operators as usual, we have
$\nl=(\nl_1,\nl_2)$ where
\begin{align*}
  \nl_1(u,\lambda) &= 
  \nabla \cdot \big(\A(y,u, \nabla u,\lambda)-\A_{p}(y,0,0)\nabla u\big)
  \\&\qquad 
  + \B(y,u,\nabla u,\lambda)-\B_z(y,0,0,0)u-\B_{p}(y,0,0,0)\nabla u,\\
  \nl_2(u,\lambda) &=  \big(\G(y,u,\nabla u,\lambda)-\G_{p}(y,0,0)\nabla u\big) + 
  \big(\G(y,u,\lambda)-\G_z(y,0,0)u\big).
\end{align*}
Substituting 
\begin{align*}
  u=U_1 \varphi_0 + R,
  \qquad 
  u_x = \beta U_2 \varphi_0 + R_x,
  \qquad 
  \nabla^\prime u = U_1 \nabla^\prime \varphi_{0} + \nabla^\prime R,
\end{align*}
we can rewrite this as 
\begin{align*}
  \nl_1 &= 
  \nabla \cdot \tilde\A\big(y,U, \nabla R,\lambda,\beta\big)
  + \tilde\B\big(y,U,R,\nabla R,\lambda,\beta\big),\\
  \nl_2 &=  \tilde\G\big(y,U,R,\nabla R, \lambda,\beta\big),
\end{align*}
for some functions $\tilde\A,\tilde\B,\tilde \G$ that are $C^{M+4}$ in all of their arguments.   Moreover, they are flat
with respect to $(U,R,\nabla R,\lambda)$ in that their Taylor expansions in these variables at the origin contain only quadratic and higher-order terms.  Note that here, and in the sequel, we write $U := (U_1,U_2)$ to shorten the equations.

With this in mind, let's analyze the terms in the fixed-point equation \eqref{eqn:fixed} and ensure they take the required form \eqref{at nonlinear terms form} for Theorem~\ref{at fixed point theorem}.    

\subsubsection*{Equation for \texorpdfstring{$U_1$}{U\_1}.}
Looking at \eqref{eqn:fixed}, we see the right-hand side of the equation for $U_1$ is $\beta \mathfrak L_1 (U,R)$ where $\mathfrak L_1(U,R) = \int_0^x U_2\, dx$. Since the operator $f \mapsto \int_0^x f \, ds$ is bounded and linear $C_\mu^{1+\alpha} \to C_\mu^{2+\alpha}$ and $C_\bdd^{1+\alpha} \to \mathring{C}_\bdd^{2+\alpha}$, $\mathfrak L_1$ satisfies the first component of \eqref{at A1}. In particular, $\atF_1$ in \eqref{at general equation} has the form \eqref{at nonlinear terms form} with $\mathfrak H_1 = 0$.

\subsubsection*{Equation for \texorpdfstring{$U_2$}{U\_2}.}
In the equation for $U_2 \in C^{1+\alpha}$ in \eqref{eqn:fixed}, consider the term
\begin{align*}
  \frac 1\beta \int_0^x \!\!\! \int_{\base} \varphi_0 \left(\nabla \cdot \tilde\A\big(y,U,\nabla R,\lambda,\beta\big)
  + \tilde\B\big(y,U,R,\nabla R,\lambda,\beta\big) \right)\, dy\, ds.
\end{align*}
The boundary integral can be handled in a similar fashion.  
Writing $\tilde \A = (\tilde \A_1,\tilde \A^\prime)$, the contribution due to
$\tilde \A_1$ can be rewritten as
\begin{align*}
  \frac 1\beta \int_0^x \!\!\! \int_{\base} \varphi_0 \partial_x
  \tilde\A_1\big(y,U, \nabla R,\lambda,\beta\big)\, dy\, ds 
  &=
  \frac 1\beta \int_{\base} \varphi_0 
  \tilde\A_1\big(y,U,\nabla R,\lambda,\beta\big) \, dy\bigg|^x_0.
\end{align*}
Stripping away the evaluations bars, we recognize this as having the form
\begin{align*}
  \frac{1}{\beta} \mathfrak{I} S_g(\mathfrak{D}(U,R); \lambda,\beta),
\end{align*}
where
\begin{align*}
  \mathfrak{D}(U,R) := (U, \nabla R),
  \qquad 
  \mathfrak{I} f:= \int_{\base} \varphi_0(y) f(\placeholder, y) \, dy,
\end{align*}
and $S_g$ is the superposition operator defined by \eqref{at superposition def} for the function
\[ g(x, y, u, r, \lambda, \beta)  := \tilde{\A}_1(y, u, r, \lambda, \beta).
\]
As $\mathfrak{D}$ only evaluates derivatives in the $R$ variables, it is easy to confirm that 
\[ \mathfrak{D} \textrm{ bounded and linear } \atXspace_\mu \longrightarrow  C_\mu^{1+\alpha}(\mathbb{R}; \mathbb{R}^2) \times C_\mu^{1+\alpha}(\overline{\Omega}; \mathbb{R}^2) =: \atYspace_\mu, \]
for any $\mu \geq 0$, with bounds uniform in $\mu$ on compact subsets of $[0,\overline{\mu})$.    Clearly, then, $\mathfrak{D}$ satisfies \eqref{at A2}.
The function $g$ is $C^{M+3}$ and flat as required by \eqref{at g assumptions} in light of the regularity assumed on the coefficients and the presence of the trivial solution family in \eqref{F trivial solution assumption}.  Finally, $\mathfrak{I}$ simply integrates in the transversal direction, and hence 
\[ \mathfrak{I} \textrm{ bounded and linear } \atYspace_\mu \to C_\mu^{1+\alpha}(\mathbb{R}) \textrm{ and }  \atYspace_\bdd \to C_\bdd^{1+\alpha}(\mathbb{R}),\]
with bounds uniform in $\mu$ on compact subsets of $(0,\overline{\mu})$.  In particular, the structural assumption \eqref{at A3} is satisfied.    


Now let us move on to the contribution of $\tilde\A^\prime$ to the $U_2$
equation:
\begin{align*}
  \frac 1\beta \int_0^x \!\!\! \int_{\base} \varphi_0 \nabla^\prime \cdot \tilde\A^\prime\big(y,U, \nabla R,\lambda,\beta\big)\,
  dy\, ds.
\end{align*}
Stripping off the $1/\beta$ and integrating by parts in $y$, we get
\begin{align*}
  - \int_0^x \!\!\! \int_{\base} \nabla^\prime \varphi_0 \cdot \tilde\A^\prime\big(y,U, \nabla R,\lambda,\beta\big)\,
  dy\, ds
  + \int_0^x \!\!\! \int_{\partial\base} \varphi_0 N^\prime \cdot \tilde\A^\prime\big(y,U, \nabla R,\lambda,\beta\big)\,
  dS(y) \, ds.
\end{align*}
These are analogous to the contribution of $\tilde\A_1$ considered
above, except that the operator $\mathfrak{I}$ is post-composed with $f \mapsto
\int_0^x f\,ds $. While this is \emph{not} a bounded map from $C^{1+\alpha}_\bdd(\mathbb{R}) \to
C^{1+\alpha}_\bdd(\mathbb{R})$, it \emph{is} bounded $C^{1+\alpha}_\bdd(\mathbb{R}) \to
\homholder^{1+\alpha}_\bdd(\mathbb{R})$, which is all that is required in \eqref{at A3}. The contribution from $\tilde\B$ is
treated in exactly the same manner; indeed, it is even simpler since
no integration by parts is needed.

\subsubsection*{Equation for \texorpdfstring{$R$}{R}.}
The work for the $R$ equation has mostly been done through the study
of the operator $G$. We know in particular that $G$, and hence the
composition $\mathfrak{I} := G Q_{\ge 1}$, are bounded $\Yspace_\mu \to P_{\geq 1} \Xspace_\mu$ for any $\mu \in [0, \overline{\mu})$ by Proposition~\ref{partial green lemma}. The
argument of $GQ_{\ge 1}$ is the interior and boundary components of
$\nl(U_1\varphi_0 + R,\lambda)$, each of which can be written as
\begin{align*}
   S_g(U,\partial_x U_2,R, \nabla R,D^2R; \lambda,\beta)
\end{align*}
for some suitably flat  $g$ that is independent of $x$. Thus for $\mathfrak{D}$ we can take
\begin{align*}
  \mathfrak{D}(U,R) :=  
  (U, \partial_x U_2,R,\nabla R,D^2R),
\end{align*}
which satisfies 
\[ \mathfrak{D} \textrm{ is bounded and linear } \atXspace_\mu \longrightarrow C_\mu^{\alpha}(\mathbb{R} ; \mathbb{R}^3 ) \times C_\mu^\alpha(\overline{\Omega}; \mathbb{R}^{1+n+n^2}) \]
for any $\mu \in [0,\overline{\mu})$ with bounds uniform on compact subsets of this interval.  This will certainly satisfy \eqref{at A2}, and we have 
\[ (U, R) \mapsto S_g (\mathfrak{D}(U,R); \lambda, \beta) \textrm{ bounded } \atXspace_\mu \to C_\mu^\alpha(\overline{\Omega}),\]
for all $\mu \in [0,\overline{\mu})$ and uniformly on compact subsets.  Applying Proposition~\ref{partial green lemma}, we conclude that $\mathfrak{I}$ will then satisfy \eqref{at A3}:
\[ \mathfrak{I} \textrm{ is bounded and linear } C_\mu^\alpha(\overline{\Omega}) \to C_\mu^{2+\alpha}(\overline{\Omega}) \textrm{ and } C_\bdd^\alpha(\overline{\Omega}) \to C_\bdd^{2+\alpha}(\overline{\Omega}),\]
for all $\mu \in (0, \overline{\mu})$.

\subsection{Truncation and fixed point mapping} \label{truncation section}
We have verified all of the hypotheses of Theorem~\ref{at fixed point theorem}. As it stands, however, this only tells us about solutions to a truncated version of \eqref{eqn:fixed} where the nonlinear terms have been precomposed with cutoff functions. Undoing the various changes of variable, this leads us to a cut-off version $\F^r$ (in the sense of \eqref{restriction composition}) of the nonlinear elliptic operator $\F$, where $r > 0$ measures the scale of the cut-off function.  An advantage of $\F^r$ is that it is defined as a mapping $\Xspace_\mu \by \R \to \Yspace_\mu$ between weighted spaces. If we increase the weight on the target space relative to the domain, then we can arrange for $\F^r$ to have any finite degree of smoothness: $\F^r \in C^{M+3}(\Xspace_\mu \by \R; \Yspace_{(M+6)\mu})$.  This rather technical fact is a consequence of \cite[Theorem 2.1]{amick1994center}.   A slight complication is that the operator $\F^r$ is no longer local, since it is defined with reference to the spectral splitting \eqref{spectral splitting equation}. But this splitting only occurs in the transverse variable $y$, and so $\F^r$ \emph{is} local in $x$. Moreover, $\F$ and $\F^r$ agree in a sufficiently small ball in $\Xspace_\bdd$.
\begin{lemma}\label{lem cutoff F}
  In the setting of Theorem~\ref{reduction theorem}, suppose that $\| u\|_{\Xspace} < r$. Then
  \begin{enumerate}[label=\rm(\roman*)]
  \item \label{F = Fr near 0} $\F^r(u,\lambda) = \F(u,\lambda)$.
  \item \label{Fu = Fru} $\F^r_u(u,\lambda) \big|_{\Xspace_\bdd} = \F_u(u,\lambda)$.
  \item \label{DlDuF = DlDuFr} $D^{\ell}_u D^k_\lambda \F^r(u,\lambda) \big|_{\Xspace^{\ell}_\bdd} = D^{\ell}_u D^k_\lambda \F(0,0)$.
  \end{enumerate}
\end{lemma}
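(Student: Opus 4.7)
The strategy is to trace the construction of $\F^r$ back to the spectral decomposition \eqref{spectral splitting equation} and the cut-off procedure of Section~\ref{truncation section}, and exploit the fact that on functions with sufficiently small $\Xspace$-norm all cut-offs are inactive.

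Recall that $\F^r$ is obtained by inserting smooth radial cut-offs of the form $\chi_r(\cdot) = \chi(\cdot/r)$, with $\chi$ identically $1$ on the unit interval and supported in $[-2,2]$, into the arguments $(U_1, U_2, R)$ (and their relevant derivatives) appearing in the superposition operators of Section~\ref{at hypothesis section}. These cut-offs are measured in the weighted norm of $\atXspace_\mu$. Because $P_0$ and $P_{\geq 1}$ are bounded on $C^{0+\alpha}(\overline{\Omega})$, and $w_\mu \leq 1$ forces $\n{\cdot}_{\Xspace_\mu} \leq \n{\cdot}_{\Xspace}$, the spectral components of any $u \in \Xspace$ satisfy
\begin{align*}
\n{U_1}_{C^{2+\alpha}_\mu(\R)} + \n{U_2}_{C^{1+\alpha}_\mu(\R)} + \n{R}_{C^{2+\alpha}_\mu(\overline{\Omega})} \lesssim \n{u}_{\Xspace}.
\end{align*}
Calibrating the cut-off scale so that every $\chi_r \equiv 1$ when these spectral norms are bounded by a fixed multiple of $r$ makes part (i) immediate: for $\n{u}_{\Xspace} < r$ every cut-off evaluates to $1$, and $\F^r(u,\lambda)$ collapses back to $\F(u,\lambda)$.

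For part (ii), I would take any $v \in \Xspace_\bdd$ and note that, by continuity, $\n{u+tv}_{\Xspace} < r$ for all $t$ in a neighborhood of $0$. Part (i) then yields $\F^r(u+tv,\lambda) = \F(u+tv,\lambda)$ on that interval, and since $\F^r$ is of class $C^{M+3}$ its Fr\'echet and G\^ateaux derivatives agree; differentiating at $t = 0$ gives $\F^r_u(u,\lambda) v = \F_u(u,\lambda) v$. Part (iii) follows by the same reasoning applied to iterated difference quotients in $(v_1,\ldots,v_\ell) \in \Xspace_\bdd^\ell$ and $s \in \R$: for sufficiently small $|t_i|$ and $|s|$, the translate $u + \sum t_i v_i$ stays inside the $r$-ball in $\Xspace$, so by part (i) the mixed partials of $\F^r$ and $\F$ at $(u,\lambda)$ must coincide (where I read the stated identity as $D^\ell_u D^k_\lambda \F(u,\lambda)$ on the right).

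I expect the one technical point requiring care to be the weighted-versus-unweighted comparison of the spectral projections above; once it is in place the localization-of-derivatives argument is elementary, since everything reduces to the fact that the Fr\'echet derivatives of a $C^{M+3}$ map depend only on local behavior of the map.
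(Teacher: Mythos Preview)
Your approach is essentially the same as the paper's: part (i) by showing the cut-offs are inactive on small $u$, and parts (ii)--(iii) by differentiating the identity from (i) along directions in $\Xspace_\bdd$. The paper simply declares (i) ``obvious by the definition of $\F^r$'' and then gives exactly your one-line G\^ateaux argument for (ii), followed by induction on $\ell$ for (iii).

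One small correction to your elaboration of (i): the Amick--Turner cut-offs in \eqref{restriction composition} are applied \emph{pointwise} to the scalar arguments $w_i(x,y)$ of the superposition operators, not to weighted norms of $(U_1,U_2,R)$. So the relevant smallness is sup-norm smallness of $u$ and its derivatives up to second order, which follows directly from $\|u\|_{\Xspace_\bdd} < r$ and the boundedness of $P_0,P_{\geq 1}$; the weighted-norm comparison you introduce is not actually needed. You are also right to read the right-hand side of (iii) as $D^\ell_u D^k_\lambda \F(u,\lambda)$---the paper's own proof confirms this, and the printed ``$\F(0,0)$'' is evidently a typo.
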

\begin{proof}
Part~\ref{F = Fr near 0} is obvious by the definition of $\F^r$ in the sense of \eqref{restriction composition}. For~\ref{Fu = Fru}, we know that for any $v \in \Xspace_\bdd$ and for $\|u\|_{\Xspace} < r$ one can find $\varepsilon$ sufficiently small so that $\|u + \varepsilon v\|_{\Xspace} < r$. Thus
\[
\F^r_u(u,\lambda) v = \left.{d\over d\varepsilon}\right|_{\varepsilon = 0}\F^r(u+\varepsilon v, \lambda) = \left.{d\over d\varepsilon}\right|_{\varepsilon = 0}\F(u+\varepsilon v, \lambda) = \F_u(u,\lambda) v.
\]
  It is also easy to see that $D^k_\lambda \F^r(u,\lambda) = D^k_\lambda \F^r(u,\lambda)$ for $\|u\|_{\Xspace} < r$ and $k\le M$. Repeatedly differentiating with respect to $u$, \ref{DlDuF = DlDuFr} then follows by induction on $\ell$.
\end{proof}

In terms of $\F^r$, the result of applying Theorem~\ref{at fixed point theorem} is recorded in the following lemma. 
\begin{lemma}[Existence of a fixed point] \label{fixed point lemma} 
  For any integer $M \geq 2$, there exists $\mu \in (0,\overline{\mu})$, $r > 0$, $\beta \in (0,1]$, and a $C^{M+1}$ mapping 
\begin{align}
    \label{fixed point formula with xi}
    \FP \maps \R^2 \by \R  \longrightarrow \atXspace_\mu \qquad (\xi_1,\xi_2, \lambda) \longmapsto  (U_1,U_2,R)    \end{align}
so that, for all $(\xi_1, \xi_2, \lambda)$, the function $u \in \Xspace_\mu$ defined by 
  \begin{equation}
    \label{u from W}
    u(x,y) = U_1(x)\varphi_0(y) + R(x,y)
  \end{equation}
is the unique solution to the truncated problem $\F^r(u,\lambda)=0$ that satisfies the initial conditions 
  \begin{align*}
    \xi_1 = U_1(0) = \int_{\base} u(0,y) \varphi_0(y)\, dy,
    \qquad \xi_2 = U_2(0) = \frac 1\beta \int_{\base} u_x(0,y) \varphi_0(y)\, dy.
  \end{align*}
\end{lemma}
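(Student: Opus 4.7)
The plan is to apply the Amick--Turner fixed-point theorem (Theorem~\ref{at fixed point theorem}) directly to the system \eqref{eqn:fixed}, viewed as an equation of the form \eqref{at general equation} in the unknown $W := (U_1, U_2, R) \in \atXspace_\mu$. All of the required structural hypotheses \eqref{at A1}--\eqref{at A3} and \eqref{at g assumptions} have been verified component-by-component in Section~\ref{at hypothesis section}: the linear operators $\mathfrak{L}_1$, $\mathfrak{I}$, $\mathfrak{D}$, and the partial Green's function $G$ are bounded on the appropriate weighted and unweighted H\"older spaces with norms uniform on compact subsets of $[0,\overline\mu)$, while the nonlinearities $\tilde\A, \tilde\B, \tilde\G$ are $C^{M+4}$ and quadratically flat at the origin thanks to the trivial-solution assumption \eqref{F trivial solution assumption}. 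Feeding this into Theorem~\ref{at fixed point theorem} produces, for any fixed $M \ge 2$, constants $\mu \in (0, \overline{\mu})$, $r > 0$, $\beta \in (0,1]$, and a $C^{M+1}$ map $\FP \maps \R^3 \to \atXspace_\mu$ such that $\FP(\xi_1, \xi_2, \lambda)$ is the unique solution of the truncated version of \eqref{eqn:fixed} satisfying $U_1(0) = \xi_1$ and $U_2(0) = \xi_2$.

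Defining $u$ via \eqref{u from W}, the next step is to show that this $u$ solves the truncated PDE $\F^r(u,\lambda) = 0$ by reversing the derivation of \eqref{eqn:fixed}. The $R$-equation yields $L P_{\ge 1} u = Q_{\ge 1} \nl^r(u,\lambda)$ directly. Differentiating the $U_1$-equation in $x$ gives $\partial_x U_1 = \beta U_2$, so differentiating the $U_2$-equation and multiplying by $\beta$ (now absorbed into the integrand via the spectral splitting \eqref{spectral splitting equation}) produces $\partial_x^2 P_0 u = Q_0 \nl^r(u, \lambda)$, which is exactly $L P_0 u = Q_0 \nl^r(u,\lambda)$ by the commutation identity \eqref{QL PL identity}. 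Summing the two components and recalling the definition of $\nl$ yields $L u - \nl^r(u,\lambda) = \F^r(u,\lambda) = 0$.

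The initial-data formulas then follow from the $L^2(\base)$-orthonormality of $\{\varphi_k\}$: since $R = P_{\ge 1} u$ is orthogonal to $\varphi_0$ pointwise in $x$, evaluating \eqref{u from W} at $x = 0$ and pairing with $\varphi_0$ recovers $\xi_1 = U_1(0) = \int_\base u(0,y)\varphi_0(y)\, dy$, and differentiating first in $x$ gives the analogous expression for $\xi_2$ with the factor $1/\beta$ from the rescaling in \eqref{spectral splitting equation}. Uniqueness under these initial conditions transfers back from the Amick--Turner framework: any $u \in \Xspace_\mu$ solving $\F^r(u,\lambda) = 0$ with the prescribed $\xi_1, \xi_2$ induces, via the spectral splitting, a fixed point of the truncated system, which must agree with $\FP(\xi_1,\xi_2,\lambda)$.

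The main technical subtlety---already absorbed into Theorem~\ref{at fixed point theorem} and therefore not something we have to confront directly---is the $1/\beta$ factor appearing in the $U_2$-equation. For the contraction to close, the truncation radius $r$ must be coordinated with $\beta$ so that the quadratic flatness of the nonlinearity dominates the $1/\beta$ loss; this is precisely the role of the mismatched weights $\mu$ on the domain and $(M+6)\mu$ on the target in the construction of $\F^r$ described in Section~\ref{truncation section}, and it is what forces $\beta$ (rather than being free) to be fixed by the theorem.
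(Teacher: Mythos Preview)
Your proposal is correct and follows the same approach as the paper, which treats this lemma as the direct output of Theorem~\ref{at fixed point theorem} once the structural hypotheses have been checked in Section~\ref{at hypothesis section}. You have in fact supplied more detail than the paper does: the paper simply states that ``the result of applying Theorem~\ref{at fixed point theorem} is recorded in the following lemma'' and leaves the reversal of the fixed-point derivation and the verification of the initial-condition formulas implicit, whereas you spell these out.
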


\subsection{Proof of main results} \label{proof of center manifold section}

We are now ready to prove the main results of this section.
\begin{proof}[Proof of Theorem~\ref{reduction theorem}]
  Our first step is to change variables from the initial data $\xi = (\xi_1,\xi_2)$ in Lemma~\ref{fixed point lemma} to 
  \begin{align*}
    a = (a_1,a_2) := \left(\frac{u(0,0)}{\varphi_0(0)}, \frac{u_x(0,0)}{\beta\varphi_0(0)} \right).
  \end{align*}
  Towards that end, fix $(\xi_1,\xi_2) \in \mathbb{R}$, and suppose that $u$ is given by \eqref{u from W} and \eqref{fixed point formula with xi} in Lemma~\ref{fixed point lemma}. Then we calculate
  \begin{align*}
    a_1 = \frac{u(0,0)}{\varphi_0(0)} u(0,0) = U_1(0) + \frac 1{\varphi_0(0)} R(0,0)
     = \xi_1 + \frac 1{\varphi_0(0)} \FP_3(\xi_1,\xi_2,\lambda)(0,0),
  \end{align*}
  and, similarly, 
  \begin{align*}
    a_2 = \frac{u(0,0)}{\beta \varphi_0(0)} = \frac 1\beta U_1(0) + \frac 1{\beta\varphi_0(0)} R_x(0,0)
     = \xi_2 + \frac 1{\beta\varphi_0(0)} \dell_x\FP_3(\xi_1,\xi_2,\lambda)(0,0).
  \end{align*}
  Thanks to the estimates in Lemma~\ref{lem Rstructure}, the mapping $\xi \mapsto a$ is a $C^{M+1}$ near-identity change of variables. In particular, it has a $C^{M+1}$ inverse $\xi = a + g(a, \lambda)$ for some function $g\in C^{M+1}$ which is flat in that $g(0,\lambda) = g_a(0,\lambda) = 0$. Introducing the scaled variables $A := \varphi_0(0) a_1$ and $B := \beta\varphi_0(0) a_2$, we further rewrite this as 
  \[
      \xi_1 = \frac A{\varphi_0(0)} + G_1(A,B,\lambda), \qquad
      \xi_2 = \frac B{\beta\varphi_0(0)} + G_2(A,B,\lambda),
  \]
  for some $G_1,G_2$ that are flat with respect to $(A,B)$.  

  The Faye--Scheel reduction function $\Psi$ can now be explicitly defined by
  \begin{align*}
    \Psi(A,B,\lambda)(x,y) &= G_1(A,B,\lambda)\varphi_0(y) - Bx\varphi_0(y) \\
    &\qquad + \FP\bigg(
    \frac A{\varphi_0(0)} + G_1(A,B,\lambda),
    \frac B{\beta\varphi_0(0)} + G_2(A,B,\lambda), \lambda\bigg)(x,y),
  \end{align*}
  All of its properties are straightforward to check, and we obtain the formula \eqref{FS expression} with $x_0 \ne 0$ by appealing to the translation invariance of the problem.

  It remains to derive the ODE \eqref{reduced ODE} for $v := u(\placeholder,0)$.  Differentiating \eqref{FS expression} twice with respect to $x$ we obtain
  \begin{equation*}
    u_{xx}(x + \tau, y) =  \dell_\tau^2 \Psi(v(x), v'(x), \lambda)(\tau,y).
  \end{equation*}
  Setting $y=0$ and $\tau=0$ this becomes
  \begin{equation*}
    v''(x) = u_{xx}(x, 0) =  \frac {d^2}{d\tau^2}\bigg|_{\tau=0} \Psi(v(x), v'(x), \lambda)(\tau,0)
    =: f(v(x),v'(x),\lambda),
  \end{equation*}
  as desired.
\end{proof}

\begin{proof}[Proof of Theorem~\ref{expansion theorem}]
  Our proof of Theorem~\ref{reduction theorem} ensures the existence of the Faye--Scheel reduction function $\Psi$ which is uniquely defined by $\FSproj\Psi = 0$ and 
  \begin{align}
    \label{eqn:useme tilde}
    \F^r\big(A\varphi_0 + Bx\varphi_0 + \Psi(A,B,\lambda),\lambda\big) = 0.
  \end{align}
  From the regularity and flatness properties \eqref{Psi flatness} of $\Psi$, we know that $\Psi$ admits an expansion of the form \eqref{Psi taylor expansion}, and \ref{expansion theorem projection} follows from applying $\FSproj$ to it. Next we differentiate \eqref{eqn:useme tilde} to obtain
  \begin{equation}\label{cutoff derivatives}
    \partial^i_A \partial^j_B \partial^k_\lambda \Big|_{(A,B,\lambda) = 0} \F^r \left(A\varphi_0 + Bx \varphi_0 + \Psi(A,B,\lambda) \right) = 0
  \end{equation}
  for $i + j + k \le M$. Since the implied partials of $\F^r$ are all being evaluated at $(u,\lambda) = (0,0)$, we can then use Lemma~\ref{lem cutoff F} to replace them with the desired partials of $\F$, proving \ref{expansion theorem Gateaux}.

  It remains to show that the $\Psi_{ijk}$ are uniquely determined by these properties. Plugging \eqref{Psi taylor expansion} into \eqref{Gateaux derivative} and recalling that $L = \F_u(0,0)$, we find that 
  \begin{equation}\label{solve Psi}
    L \Psi_{ijk} + \mathscr R_{ijk} = 0,
  \end{equation}
  where $\mathscr R_{ijk}$ depends on $\Psi_{i'j'k'}$ for $i' \le i, \ j'\le j,\ k'\le k$ and $i' + j' + k' \le i + j + k -1$. Lemma~\ref{lem bordering} then allows one to solve $\Psi_{ijk}$ uniquely from $\{ \Psi_{i'j'k'} \}$.
\end{proof}

\subsection{General strategy} \label{general strategy sec}

In the course of proving Theorem~\ref{expansion theorem}, we have shown that each term $\Psi_{ijk}$ can indeed be uniquely determined by iteratively solving a hierarchy of equations of the form \eqref{solve Psi}, where the terms of the right-hand side involve information about various Fr\'echet derivatives of $\F$ at $(0,0)$. In this section, we briefly illustrate how this process is carried out in practice, and also how the reduced equation \eqref{reduced ODE} can be rescaled to obtain homoclinic or heteroclinic solutions.


\subsubsection{Iteration}\label{subsubsec iteration}   

The smoothness of $\F^r$ and Lemma~\ref{lem cutoff F} allow us to write
\begin{equation}\label{expansion F}
    \F^r(u,\lambda) = \sum_{1 \le \ell + k \le K} \lambda^k D^{\ell}_u D^k_\lambda \F^r(0,0)[u,u,\ldots,u] + O\bigg(\sum_{\ell+k=K+1} \n u_{\Xspace_\mu} ^{\ell}\abs \lambda^k\bigg) 
\end{equation}
in $\Yspace_{(K+4)\mu}$ for any integer $K \le M$, where the $D^{\ell}_u D^k_\lambda \F^r(0,0)$ are symmetric bounded $\ell$-linear mappings $\Xspace_\mu^\ell \rightarrow \Yspace_{(K+4)\mu}$. 
For $i+j+k \le K$, the remainder terms in \eqref{expansion F} do not contribute to $\mathscr R_{ijk}$ in \eqref{solve Psi}. Therefore, when solving \eqref{solve Psi}, it is sufficient to work with the truncated version of \eqref{expansion F} that results from simply setting these remainder terms to zero.

For an integer $K \ge 1$ and a smooth function $g = g(A, B, \lambda)$, we define $\mathcal T_K g$ to be the $K$-th order Taylor expansion of $g$ at 0, that is,
\begin{equation*}
  \mathcal T_K g(A, B, \lambda) := \sum_{i + j + k \le K} \partial^i_A \partial^j_B \partial^k_\lambda g(0,0,0) A^i B^j \lambda^k.
\end{equation*} 
Plugging \eqref{expansion F} and \eqref{Psi taylor expansion} into \eqref{eqn:useme tilde} we see that, for $1\le K \le M$,
\begin{equation}\label{truncated Psi eqn}
\mathcal T_K  \sum_{1 \le \ell + k \le K} \lambda^k D^{\ell}_u D^k_\lambda \F(0,0)[u^{(K)},\ldots,u^{(K)}]  = 0,  
\end{equation}
where 
  \begin{equation*}
  \begin{split}
    u^{(K)}(x,y;A,B,\lambda) & := \mathcal T_K\left[ A\varphi_0(y) + Bx\varphi_0(y) + \Psi(A, B, \lambda)(x,y) \right] \\
    & = A\varphi_0(y) + Bx\varphi_0(y) + \sum_{\substack{2 \le i+j+k \le K \\ i+j\ge1}} \Psi_{ijk}(x,y) A^i B^j \lambda^k. 
    \end{split}
  \end{equation*}

More explicitly, at $K = 1$, the definition of $u^{(K)}$ reads simply $u^{(1)}(x,y) = A\varphi_0(y) + Bx\varphi_0(y)$.  For $K \ge 2$, we may use the facts that 
\begin{equation}\label{sim truncation}
D^k_\lambda \F(0,0) = 0, \quad \text{and} \quad u^{(K)} = u^{(K-1)} +  \sum_{\substack{i+j+k = K \\ i+j\ge1}} \Psi_{ijk}(x,y) A^i B^j \lambda^k
\end{equation}
to derive the equations for $\Psi_{ijk}$ when $i+j+k = K$. Below we give two example calculations for $K = 2,3$. As all of the derivatives of $\F$ are evaluated at $(0,0)$, the base point will be suppressed for notational convenience.  

When $K = 2$, \eqref{truncated Psi eqn} and \eqref{sim truncation} imply that 
\[
L \bigg(\sum_{\substack{i+j+k = 2 \\ i+j\ge1}} \Psi_{ijk} A^i B^j \lambda^k \bigg) = - \lambda \F_{u\lambda} u^{(1)} - \F_{uu}[u^{(1)}, u^{(1)}],
\]
from which $\left\{ \Psi_{ijk}:\ i+j+k=2 \right\}$, and hence $u^{(2)}$, can be uniquely solved by applying Lemma~\ref{lem bordering}.  At $K=3$, a similar calculation gives
\begin{align*}
  L \bigg(\smash{\sum_{\substack{i+j+k = 3 \\ i+j\ge1}}} \Psi_{ijk} A^i B^j \lambda^k \bigg) & = -L u^{(2)} - \lambda \F_{u\lambda} u^{(2)} - \F_{uu}[u^{(2)}, u^{(2)}] \\
& \qquad - \lambda^2 \F_{u\lambda\lambda} u^{(1)} - \lambda \F_{uu\lambda}[u^{(1)}, u^{(1)}] - \F_{uuu}[u^{(1)},u^{(1)},u^{(1)}].
\end{align*}
The right-hand side is explicit. Grouping like terms and applying Lemma~\ref{lem bordering} we may determine $u^{(3)}$.

  This process repeats at each stage: we have to iteratively solve linear equations
  of the form
  \begin{equation}\label{eqn for Psi}
    L\Psi_{ijk} = F_{ijk},
    \qquad \FSproj\Psi_{ijk} = 0,
  \end{equation}
  where $i+j+k=K$ and $F_{ijk}$ depends only on $u^{(K-1)}$.

In summary the calculation for $\Psi_{ijk}$ can be explained in the following
  way:
  \begin{enumerate}[label=\rm Step \arabic*.]
  \item\label{step1} Taylor expand the terms in $\F$ to order $M$ to obtain
    a Taylor-truncated operator which is naturally defined on
    weighted spaces.
  \item\label{step2} The composition of the $K$-th order Taylor-truncated operator with $u^{(K)}$ is a polynomial in
    $A,B,\lambda$ whose $\Yspace_{K\mu}$ coefficients depend on the $\Psi_{ijk}$.
  \item\label{step3} Setting the coefficients of $A^i B^j \lambda^k$ for $i+j+k \le K$ equal
    to zero, we obtain a series of equations for the $\Psi_{ijk}$.
  \item\label{step4} Working in order of increasing $i+j+k$, this becomes a sequence
    of linear problems \eqref{eqn for Psi} where $F_{ijk}$ is known. Lemma~\ref{lem bordering} ensures that these equations can be solved uniquely.
  \end{enumerate}

\subsubsection{Anticipated scaling}\label{subsubsec scaling} 
The reduced ODE \eqref{reduced ODE} always admits two degrees of freedom:  we may select a length scale for the $x$-variable as well as an amplitude scale for the unknown.   
Making intelligent choices can vastly simplify the expansion procedure.  For example, if we hope to find a heteroclinic solution, the reduced ODE must have a certain structure, and this leads to an anticipated scaling.  

%

By design, \eqref{reduced ODE} always has an equilibrium at the origin. In applications we are interested in cases  where the linearized problem there is nondegenerate in that $f_{(A,B)}(0,0,\lambda)$ has no zero eigenvalue for $0 < |\lambda| \ll 1$. Treating $\lambda$ as fixed and performing a double expansion in $(A,B)$ we have
\begin{equation}\label{double expansion f}
\begin{split}
f(A,B,\lambda) & = f_A(0,0,\lambda) A + f_B(0,0,\lambda)B + {1\over2} f_{AA}(0,0,\lambda) A^2 + f_{AB}(0,0,\lambda) AB \\
& \quad + {1\over2}f_{BB}(0,0,\lambda) B^2 + {1\over 6} f_{AAA}(0,0,\lambda) A^3 + O\left( (|A| + |B|)^2 |B| \right).
\end{split}
\end{equation}
Note that the nondegeneracy condition forces $f_A(0,0,\lambda) \neq 0$.  For nontrivial heteroclincic or homoclinic solutions, we need a second rest point, which in terms of the above expansion translates to the right-hand side of \eqref{double expansion f} being nonlinear in $A$. Therefore, let us assume that there is a least integer $m\ge 2$  such that $\partial^m_A f(0,0,\lambda) \ne 0$.

Now, we introduce a rescaling of the axial variable $X = \kappa x$ and amplitude $v = aV$.  Given the above discussion, we want $v''$, $v$, and $v^m$ to appear as $O(1)$ terms in the corresponding rescaled version of the reduced ODE \eqref{reduced ODE}.  This balancing forces the inverse length scale $\kappa$ and the amplitude scale $a$ to satisfy 
\begin{equation}\label{scale k a}
a \kappa^2 \sim a f_A(0, 0, \lambda) \sim a^m \partial^m_A f(0,0,\lambda) \quad \text{as} \quad \lambda \to 0.
\end{equation}
Clearly, then,  $\kappa$ and $a$ involve roots of $f_A$ and $\partial^m_A f$. To avoid this inconvenience, we may reparameterize $\lambda = \lambda(\varepsilon)$, and consider
\begin{equation}\label{reparametrization equation}
  \lambda = \lambda_p \varepsilon^p, \qquad \kappa = \kappa_n \varepsilon^n, \qquad a = a_q \varepsilon^q
\end{equation}
for some $p, n, q \in \mathbb N$.  It then follows from \eqref{scale k a} that
\begin{equation}\label{rescaling relation}
\varepsilon^{2n} \sim f_A(0,0,\varepsilon^p) \sim \partial^m_A f(0,0,\varepsilon^p) \varepsilon^{(m-1)q} \qquad \text{as} \quad \varepsilon \to 0.
\end{equation}
In particular, this implies that when we carry out the iteration procedure of Section \ref{subsubsec iteration}, $A$, $B$, and $\lambda$ have differing orders of magnitude.  It therefore suffices to compute the $\Psi_{ijk}$ for $i,j,k$ in the index set 
\begin{equation*}
  \mathcal J = \left\{ (i,j,k) \in \mathbb N^3: \ qi + (q+n) j + k \le q+2n,\ i + j + k \ge 2,\ i + j \ge 1  \right\}.
\end{equation*}

Notice that we have not taken into account the contribution of $f_B(0,0,\lambda)B$ in the expansion \eqref{double expansion f}. This can be justified, for instance, when the system has the reversal symmetry $(v(x),v^\prime(x)) \mapsto (v(-x), -v^\prime(-x))$.  However, if $f_B(0,0,\lambda) \ne 0$, the length scale will be over-determined since there is a linear term in $B$  in \eqref{double expansion f} which also suggests a choice of $\kappa$.   For this to be compatible with \eqref{scale k a}, we must therefore have
\begin{equation}
  \label{compatibility}
  |f_A(0,0,\lambda)| \sim {|f_B(0,0,\lambda)|^2} \ \text{ as }\  \lambda \to 0.
\end{equation}
With enough parameters in the problem, one can always arrange for this to hold; see, for example,  Section \ref{sec fkpp}.


\subsection{Extensions} \label{extensions section}

\subsubsection*{Other boundary conditions}

In formulating Theorems~\ref{reduction theorem} and \ref{expansion theorem}, we chose to focus on problems that linearize to co-normal boundary conditions.  This is not essential: looking at the proof, it is clear that one can just as easily impose nonlinear Dirichlet conditions of the form 
\[ \G(y,u,\lambda) = 0 \qquad \textrm{on } \partial \Omega,\]
for $\G $ that is $C^{M+4}$ in its arguments. Naturally, for this case the codomain of $\F$ should be redefined to be
\[ \Yspace := C^{0+\alpha}(\overline{\Omega}) \times C^{2+\alpha}(\partial \Omega),\]
and likewise for $\Yspace_\mu$ and $\Yspace_\bdd$.  In place of the obliqueness assumption \eqref{nonlinear obliqueness}, we now require that
\begin{equation*}
  \G_z(y, 0,0) \neq 0 \qquad \textrm{for all } y \in \base. 
\end{equation*}
The proof of Proposition~\ref{partial green lemma} then proceeds as before, only using a priori estimates for linear elliptic PDE with homogeneous Dirichlet conditions.   The fixed point argument is essentially unchanged.

Likewise, if $\partial \Omega$ has two or more connected components, one can freely impose either Dirichlet or co-normal conditions on each, adjusting the regularity of $\Yspace$ accordingly.  

\subsubsection*{Internal interfaces and free boundaries} 

We can also expand the scope of the reduction theorem to treat nonlinear transmission problems.  Suppose that $n = 2$ and the base $\base$ is the union of two open intervals:
\[ \base = \base_1 \cup \base_2, \qquad \base_1 := (-1,0), \, \base_2 := (0,1).\]
Let $\Omega := \mathbb{R} \times \base$ be the (slitted) cylinder, and say $\Omega_i = \mathbb{R} \times \base_i$.  

Physically, one can for instance imagine this as representing two immiscible fluids confined to a channel with rigid walls; the interface between them is the line $\Gamma := \mathbb{R} \times \{ 0 \}$. Of course this interface is only flat once we have performed a change of variables, and this may introduce terms in the interior equation relating to traces (or derivatives of traces) of quantities on the boundary.   With that in mind, we consider the following quite general quasilinear elliptic problem: 
\begin{equation}
  \left\{ \begin{aligned} 
    \nabla \cdot \A(y, u, \nabla u, u|_\Gamma,  u_x |_\Gamma, \lambda)  & = 0 \qquad \textrm{in } \Omega \\
    \G(u_1, u_2, \nabla u_1, \nabla u_2, \lambda)& = 0 \qquad \textrm{on } \Gamma \\
    \K(u_1, u_2, \lambda) & = 0 \qquad \textrm{on } \Gamma \\
    u & = 0 \qquad \textrm{on } \{ y = \pm 1\},
  \end{aligned} \right. \label{transmission elliptic PDE} 
\end{equation}
where $\A_1$, $\A_2$, $\G$, $\K$ are $C^{M+4}$ in their arguments.  Here, we are breaking convention slightly by writing $u_i := u|_{\Omega_i}$ and likewise for $\A$.  As before, assume that $\A_i$ is uniformly elliptic \eqref{nonlinear ellipticity}.  In place of obliqueness \eqref{nonlinear obliqueness}, we now ask that 
\[
N(y) \cdot \left( \G_{p_1}(z_1, z_2, p_1, p_2) - \G_{p_2}(z_1, z_2, p_1, p_2) \right) > \chi \quad \textrm{for all } y \in \Omega^\prime,~ p_1, p_2 \in \mathbb{R}^n,~ z_1, z_2, \lambda \in \mathbb{R}.
\]

The elliptic problem \eqref{transmission elliptic PDE} can be rewritten as an operator equation $\F(u,\lambda) = 0$, with $\F = (\F_1, \F_2, \F_3)$ corresponding to the first three equations and the homogeneous Dirichlet condition incorporated into the definition of the space.  The main restriction is that the \emph{linearized} problem is of transmission type, that is, 
\begin{equation}
  \begin{aligned}
    \F_{1u}(0,0) v & := \nabla \cdot \left( a(y) \nabla v \right),   \\
    \F_{2u}(0,0) v & := -\jump{N(y) \cdot a(y) \nabla v} + g_1(y) v_1 + g_2(y) v_2, \\
    \F_{3u}(0,0) v & := \jump{v}. 
  \end{aligned}
  \label{linear tranmission F} 
\end{equation}
Here $\jump{\placeholder} := (\placeholder)_2 - (\placeholder)_1$ denotes the jump of a quantity over $\Gamma$, and the coefficients $a$, $g_i$ are obtained from $\A_i$ and $\G$ in the obvious way.    

In typical applications, one asks for solutions whose restriction to $\Omega_i$ is smooth up to the boundary.  We therefore set 
\begin{equation}
  \Xspace := \left\{ u  : u|_{\Omega_i} \in C^{2+\alpha}(\overline{\Omega_i}), ~ u|_{y=\pm1} = 0 \right\},
  \label{definition X transmission}
\end{equation}
and take as the codomain for the corresponding nonlinear mapping $\mathscr{F}$ the space
\begin{equation}
  \Yspace := \left\{ w : w|_{\Omega_i} \in C^{0+\alpha}(\overline{\Omega_i})  \right\} \times C^{1+\alpha}(\Gamma) \times C^{2+\alpha}(\Gamma).
  \label{definition Y transmission}
\end{equation}

While the jump conditions on $\Gamma$ in \eqref{linear tranmission F} are somewhat exotic, there is a well-established literature regarding them, including the Schauder estimates \cite{ladyzhenskaya1968linear} that we require.  
It is then quite straightforward to generalize Theorem~\ref{reduction theorem} and Theorem~\ref{expansion theorem} to the setting of \eqref{transmission elliptic PDE}.  Indeed, Amick and Turner explicitly mention how their fixed point theory accommodates spaces similar to \eqref{definition X transmission} and \eqref{definition Y transmission} (see, \cite[Remark 2.2, Remark 3.2]{amick1994center}), and in \cite{amick1989small} they apply it to a transmission problem in hydrodynamics that is a special case of what we consider in Section~\ref{sec ww}.

\subsubsection*{Diagonal elliptic systems}

Another possibility is to study systems of quasilinear elliptic PDE.  To do this in complete generality is beyond the scope of this paper, but, with just a minor modification, the above argument can treat a special class of systems that are important for the proof of Theorem~\ref{nondegeneracy theorem}.  

Letting $\Omega$ again be any connected cylinder as in Section~\ref{introduction section}, we consider solutions $u = (u^1, u^2)$ to   
\begin{equation}
  \left\{ \begin{aligned} 
    \nabla \cdot \A^i(y, \nabla u^i, \lambda)  + \B^i(y, u, \nabla u, \lambda) & = 0 \qquad \textrm{in } \Omega \\
    -N(y) \cdot  \A^i(y, \nabla u^i, \lambda) + \G^i(y, u, \lambda) & = 0 \qquad \textrm{on } \partial\Omega,   \end{aligned} \right. \label{diagonal elliptic PDE} 
\end{equation}
for $i = 1, 2$.  We assume that the coefficients $\A^i$, $\B^i$, $\G^i$ are $C^{M+4}$ in their arguments, and also that uniform ellipticity \eqref{nonlinear ellipticity} and obliqueness \eqref{nonlinear obliqueness} hold.  Suppose further that the linearized problem at $(u, \lambda) = (0,0)$ is \emph{diagonal} in the sense that 
 \eqref{diagonal elliptic PDE} can be rewritten as 
\begin{equation}
  \left\{
    \begin{aligned}
      L u^1 & = \mathcal{N}^1(u^1, u^2, \lambda) \\
      L u^2 & = \mathcal{N}^2(u^1, u^2, \lambda), 
    \end{aligned} \right.
  \label{definition diagonal system}
\end{equation}
where $L : \Xspace_\bdd \to \Yspace_\bdd$ is a bounded linear operator, and each $\mathcal N^i$ is a divergence form nonlinear operator that is (i) $C^{M+3}$ as a mapping $\Xspace_\bdd \times \Xspace_\bdd \times \mathbb{R} \to  \Yspace_\bdd$, and (ii) satisfies $\mathcal N^i(0,\lambda)=0$ and $\mathcal{N}^i_u(0,0) = 0$.  Arguing exactly as in Section~\ref{reformulation fixed point section}, this problem can be reformulated as a fixed-point equation of the form \eqref{eqn:fixed} but with six components --- three each for $u^1$ and $u^2$.  Amick and Turner's theory also applies in this setting, and so we recover Theorem~\ref{reduction theorem} and Theorem~\ref{expansion theorem}, with the reduction function now taking values in the product space $\Xspace_\mu \times \Xspace_\mu$. 


\subsubsection*{Commuting linearization and reduction} 
With the above center manifold theory for diagonal systems at our disposal, we are now prepared to prove Theorem~\ref{nondegeneracy theorem}.

\begin{proof}[Proof of Theorem~\ref{nondegeneracy theorem}]
Suppose that we are in the setting of Theorem~\ref{reduction theorem}.  Throughout the argument, we will work with a fixed value of $\lambda$ that is taken sufficiently small.  For convenience, it will therefore be suppressed.

Consider the following (truncated) \emph{augmented problem}
\begin{equation}
  \label{nondegen augmented system} 
  \mathscr{G}^r(u, \dot u) := \Big( \F^r(u),\ \F_u^r(u) \dot u \Big) = 0.
\end{equation}
Recall that $\F^r$ denotes the truncated nonlinear mapping in the sense of Lemma~\ref{lem cutoff F}.   Naturally, \eqref{nondegen augmented system} is a (truncated) diagonal system satisfying \eqref{diagonal elliptic PDE} and \eqref{definition diagonal system}, and so we may apply the modified version of Theorem~\ref{reduction theorem} to classify its small bounded solutions.  In particular, there exists neighborhoods $U \subset \Xspace_\bdd \times \Xspace_\bdd$ and $V \subset \mathbb{R}^4$ of the origin, and a reduction function $(\Phi,  \Upsilon) \in C^{M+1}(\mathbb{R}^4, \Xspace_\mu  \times \Xspace_\mu)$ so that $(w,\dot w) \in  U$ solves \eqref{nondegen augmented system} if and only if
\begin{equation}
  \left\{
    \begin{aligned}
      w &= ( A + B x ) \varphi_0(y) + \Phi( A, B, \dot A, \dot B) \\
      \dot w &= (  \dot A +  \dot B x ) \varphi_0(y)  + \Upsilon(A, B, \dot A, \dot B),
    \end{aligned}
  \right.
  \label{nondegen uw reduction}
\end{equation}
for some $(A,B,\dot A,\dot B) \in V$.   We are recycling notation here somewhat, as $\Phi$ above is not the same as the one occurring in Remark~\ref{Phi remark}.    Let us now \emph{define} 
\[ \Psi(A,B) := \Phi(A,B, 0, 0).\]
It is easy to check that this function has all the properties of the reduction function furnished by  Theorem~\ref{reduction theorem} to the original (truncated) problem.  In particular, this means that any sufficiently small  $w \in \Xspace_\bdd$ satisfying $\F^r(w) = 0$ can be written 
\begin{equation}
  w(x,y) = \left(A + Bx\right) \varphi_0(y) + \Psi(A,B)(x,y) \qquad \textrm{in } \Omega,  \label{nondegen u reduction} 
\end{equation} 
for some $A$, $B \in \mathbb{R}$.   Moreover, $v := w(\placeholder,0)$ solves the reduced ODE \eqref{reduced ODE}, with $f$ defined by \eqref{reduced f definition}.  For simplicity, let us also normalize $\varphi_0(0) = 1$, which implies $A = v(0)$ and $B = v^\prime(0)$.  Note also that, by construction, the range of $\Phi$ and $\Upsilon$ lie in the kernel of the projection $\FSproj$ onto $\kernel{L}$.  Consequently, the coefficients $A$ and $B$ in \eqref{nondegen u reduction} and \eqref{nondegen uw reduction} must indeed coincide. 
%
%

%
%
Fix a small solution $u \in \Xspace_\bdd$ to $\F(u) = \F^r(u) = 0$, and let $\dot u \in \Xspace_\bdd$ be a solution of the linearized problem as in the statement of Theorem~\ref{nondegeneracy theorem}.   Even though we do not assume that $\|\dot u\|_{\Xspace}$ is small, the structure of the augmented problem problem ensures that, for all $\delta > 0$ sufficiently small, $(u, \delta \dot u)$ lies in $U$ and \[ \mathscr{G}(u,\delta \dot u) = \mathscr{G}^r(u,\delta \dot u) =  0.\]
In that case, we can use \eqref{nondegen uw reduction} with $(u,\delta \dot u)$ in place of $(w,\dot w)$ to see that
\begin{align*}
\delta \dot u(x,y) & = \delta (\dot A + \dot B x ) \varphi_0(y) + \Upsilon(A, B, \delta \dot A, \delta \dot B).
\end{align*}
As $\Upsilon$ is $C^{M+1}$, an expansion of the right-hand side above in $\delta$ yields
\begin{equation}
  \dot u = ( \dot A + \dot B x ) \varphi_0(y) + \Upsilon_{(\dot A,\dot B)}(A, B, 0, 0) \cdot (\dot A, \dot B).
  \label{nondegen second w reduction}
\end{equation}

We claim further that 
\begin{equation}
  \Psi_{(A,B)}(A,B) = \Upsilon_{(\dot A,\dot B)}(A,B, 0,0).
  \label{nondegen critical identity} 
\end{equation}
To see this, first observe that the construction of the reduction functions $\Psi$, and $(\Phi, \Upsilon)$ ensure that
\begin{equation*}
  \begin{aligned}
    \F^r \left( (A+Bx) \varphi_0 + \Phi(A,B,\dot A,\dot B) \right) & = 0, \\ 
    \mathscr{G}^r\big( (A+Bx) \varphi_0 + \Phi(A,B,\dot A, \dot B ), (\dot A + \dot B x) \varphi_0 + \Upsilon(A,B,\dot A,\dot B)\big) & = 0,
  \end{aligned}
\end{equation*}
for all $(A,B,\dot A,\dot B) \in V$. Note that $\F^r$ is $C^2$ as a mapping $\Xspace_\mu \to \Yspace_{4\mu}$.  This permits us to differentiate the first equation with respect to $(A,B)$, and upon evaluating at $(A,B,0,0)$ we find that 
\[
\F_u^r(u) \left[ \varphi_0 + \Psi_A(A,B)  \right] = 0, \qquad
\F_u^r(u) \left[ x \varphi_0 + \Psi_B(A,B) \right] = 0.
\]
Likewise, the second component of $\mathscr{G}^r$ is  $(u,\dot u) \mapsto \F_u^r(u) \dot u$, which is $C^1$ as a mapping $\Xspace_\mu  \times \Xspace_\mu \to \Yspace_{4\mu}$.  Taking its derivative with respect to $(\dot A,\dot B)$ and evaluating at $(A,B,0,0)$ leads to the identities
\[ \F_{u}^r(u)\left[ \varphi_0 + \Upsilon_{\dot A}(A,B,0,0) \right]  = 0, \qquad \F_u^r(u) \left[ x \varphi_0 + \Upsilon_{\dot B}(A,B,0,0) \right] = 0. \]
Combining the two identities above we conclude
\[ \F_u^r(u) \left[ \Psi_A(A,B) - \Upsilon_{\dot A}(A,B,0,0) \right] = 0, \qquad \F_u^r(u) \left[ \Psi_B(A,B) - \Upsilon_{\dot B}(A,B,0,0) \right] = 0.\]
On the other hand, by construction 
\[ \FSproj \left[ \Psi_A(A,B) - \Upsilon_{\dot A}(A,B,0,0) \right] = 0, \qquad \FSproj \left[ \Psi_B(A,B) - \Upsilon_{\dot B}(A,B,0,0) \right] = 0.\]
We know from Lemma~\ref{lem bordering} that the bordered operator $w \mapsto (\F_u^r(0) w, \FSproj w)$ is invertible $\Xspace_\mu \to \Yspace_\mu \times \kernel{L}$. Moreover, if $u \in \Xspace_\bdd$ has $\|u\|_{\Xspace}$ sufficiently small, 
the same is true for $w \mapsto (\F_u^r(u)w, \FSproj w)$ by a perturbation argument. Hence we have proved the key identity \eqref{nondegen critical identity}, at least when $A=v(0)$ and $B=v'(0)$ correspond to a sufficiently small solution $u \in \Xspace_\bdd$. The uniform smallness of $u$ in particular means that, say by Lemma~\ref{lem cutoff F}, we do not have to worry about the cut-off functions when performing this perturbative argument.

 Theorem~\ref{nondegeneracy theorem} follows almost immediately.  From \eqref{reduced ODE},  \eqref{nondegen second w reduction}, and \eqref{nondegen critical identity} we see that $\dot v := \dot u(\placeholder,0)$ solves the reduced equation
\[ \dot v^{\prime\prime} = g(v, v^\prime, \dot v, \dot v^{\prime}),\]
for
\[ g(A,B,\dot A,\dot B) := \frac{d^2}{dx^2}\Big|_{x=0}\left[ (\dot A,\dot B) \cdot \Psi_{(A,B)}(A,B)(x,0) \right]. \]
But, recalling the definition of $f$ \eqref{reduced f definition}, this becomes exactly the claimed ODE \eqref{linearized reduced ODE}. \end{proof}

\section{Anti-plane shear}\label{sec ap} 

Consider a homogeneous, incompressible, isotropic elastic cylinder $\mathcal D = \Omega \times \R$ with generators parallel to $z$-axis and cross section $\Omega \subset \R^2$ in $(x,y)$-plane. \emph{Anti-plane shear} describes the situation where the deformation takes the form 
\begin{equation}\label{deformation}
  {\id} + u(x, y) e_3,
\end{equation}
where $e_3$ is the standard basis vector $(0,0,1)^T$. That is, the displacement of each particle is parallel to the generators of the cylinder and independent of its axial position.

For an isotropic elastic solid, the strain energy density $\strainW$ is a function of the three principal invariants $I_1, I_2, I_3$ of the Cauchy--Green tensor. In this section, we will consider a polynomial rubber elastic model, which corresponds to the case where $\strainW$ is a polynomial in $I_1$ and $I_2$ \cite{Rivlin1951}. Thus we can write
\begin{equation}
  \label{ap W}
  \strainW(I_1, I_2) := \sum_{i+j=1}^N C_{ij} (I_1 - 3)^i (I_2 - 3)^j. 
\end{equation}
 Note that when $N=1$, $C_{01} = 0$, this reduces to the standard neo-Hookean solid model \cite{Treloar1948}. Values of $N > 2$ are rarely used in practice because it is difficult to fit such a large number of material properties to experimental data. Therefore, we restrict our attention to the quadratic case $N = 2$; this will result in a quasilinear PDE with a 4-Laplacian term, cf.~\eqref{ap simpler}.

Imposing the anti-plane shear ansatz \eqref{deformation} and assuming incompressibility, we know that the principal invariants satisfy $I_1 = I_2 = 3 + |\nabla u|^2 =: I$, and $I_3 = 1$.  Hence, we may identify $\strainW$ with the function $\strainW(I) = \strainW(3 + |\nabla u|^2)$; see, for example, \cite{knowles1976finite,horgan1983finite}. At infinitesimal deformations, the shear modulus is given by $2\strainW'(3)$ which is supposed to be positive. For simplicity, we normalize $\strainW^\prime(3) = 1$.  Then the quadratic rubber model \eqref{ap W} becomes
\begin{equation}
  \label{ap W simple}
  \strainW(I) = (I-3) + w_1 (I -3)^2, \qquad \strainW'(3+|\nabla u|^2) = 1 + 2w_1 |\nabla u|^2,
\end{equation}
where $w_1 := \strainW^{\prime\prime}(3)/2$ is a material constant.

Following Healey and Simpson \cite{healey1998global}, we suppose that the body is subjected to a parameter-dependent ``live" body force $b=b(\lambda,  u)$. As in, e.g., \cite{HK81,horgan1983finite}, we consider the geometrical setting  where $\Omega = \R \times (-\pi/2,\pi/2)$ is an infinite strip and homogeneous Dirichlet boundary conditions are imposed on $\{y = \pm\pi/2\}$. 

A static equilibrium then satisfies 
\begin{equation}\label{ap simple}
    \left\{ 
    \begin{aligned} 
      \nabla \cdot \left( \strainW'(3+ |\nabla u|^2) \nabla u \right) - b( u,\lambda)  & =  0 & &  \text{in } \Omega \\
      u & = 0 \quad & & \text{on } \partial\Omega.
    \end{aligned} 
    \right. 
\end{equation}
The system \eqref{ap simple} carries a variational structure with the energy 
\begin{equation*}
E(u) := \int_\Omega \left[ \strainW(3+|\nabla u|^2) + B( u,\lambda) \right] \,dx \, dy,
\end{equation*}
where $B_u = {1\over2} b$. Note that \eqref{ap simple} is invariant under the ``reversibility'' reflection $u(x,y) \mapsto u(-x,y)$ about the $(y,z)$-plane.
We will assume in addition that it is invariant under the reflection $u \mapsto -u$, which forces
\begin{equation}
  \label{ap symmetry}
  b(\placeholder, \lambda) \text{ is odd, and hence } B(\placeholder, \lambda) \text{ is even.}
\end{equation}

The eigenvalue problem for the linearized transversal operator corresponding to  \eqref{ap simple} is simple to compute:
\begin{equation*}
\left\{
  \begin{aligned}
    w_{yy} - b_u(0,0) w & = \nu w  & &\qquad \textrm{in } (-\pi/2,\pi/2) \\
    w & = 0 & & \qquad \textrm{on } \{ y = -\pi/2, \pi/2 \}.
  \end{aligned} \right. 
\end{equation*}
If the body force $b$ satisfies
\begin{equation}
  \label{ap force1}
  b_u(0,0) = -1,
\end{equation}
then $\nu = 0$ is a simple eigenvalue, and the rest of the spectrum is negative. The kernel of the linearized operator is generated by
\[
  \varphi_0(y) := \cos y.
\]

To make things concrete, we introduce a specific ansatz for the body force:
\begin{equation}
  b(u, \lambda) = -u  + \lambda b_1 u + b_2 u^3.
  \label{ap expansion b}
\end{equation}
Note that this satisfies both \eqref{ap symmetry} and \eqref{ap force1}.  One can of course add higher-order terms in $u$ if desired; see Appendix~\ref{subsec ap iteration}. Following \eqref{reparametrization equation}, we reparametrize $\lambda = \lambda_2 \varepsilon^2$. The model \eqref{ap simple} then becomes
\begin{equation}\label{ap simpler}
    \left\{ 
    \begin{aligned} 
      \Delta u + 2 w_1\nabla \cdot \left( |\nabla u|^2 \nabla u \right) + u - b_1\lambda_2\varepsilon^2 u  & =  0 & &  \text{in } \Omega \\
      u & = 0 \quad & & \text{on } \partial\Omega.
    \end{aligned} 
    \right. 
\end{equation}
\begin{figure}
  \centering
  \includegraphics[scale=1.1]{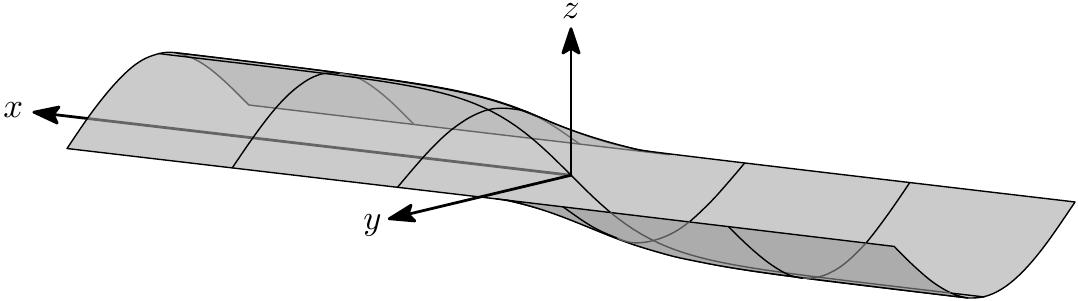}
  \caption{Leading-order approximation of the front-type solutions in Theorem~\ref{thm ap}\ref{thm ap front}. The graph $z=u(x,y)$ is the image of the strip $\{z=0, \abs y < \pi/2 \}$ under the anti-plane deformation \eqref{deformation}.}
  \label{fig:elasticity}
\end{figure}

\begin{theorem}[Fronts in anti-plane shear deformation] \label{thm ap}
  Consider the anti-plane shear problem \eqref{ap simpler} with strain energy $W$ given by \eqref{ap W simple} and a live body force $b$ of the form \eqref{ap expansion b} with $b_2 = 0$. 
  \begin{enumerate}[label=\rm(\alph*)]
  \item \label{thm ap front} When $b_1\lambda_2 < 0$, $w_1 > 0$, there exists $\varepsilon_0 > 0$ and a family of front-type solutions 
    \[  \left\{ (u^\varepsilon, \varepsilon) \in C_\bdd^{2+\alpha}(\overline{\Omega}) \times \R: -\varepsilon_0 < \varepsilon < \varepsilon_0 \right\} \]
  bifurcating from the unforced state $(u,\varepsilon) = (0,0)$.  It exhibits the asymptotics:
  \begin{equation}
    u^\varepsilon(x,y) = a_1\varepsilon \tanh{\left( {\kappa_1 \varepsilon x} \right)} \cos(y) + O(\varepsilon^2) \quad \textrm{in } C_\bdd^{2+\alpha}(\overline{\Omega}),
    \label{ap asymptotics1}
  \end{equation}
  where $\displaystyle a_1 = \sqrt{{-2b_1 \lambda_2 \over 3w_1}}, \ \ \kappa_1 = \sqrt{{-b_1\lambda_2 \over 2}}$.
\item \label{thm ap homoclinic} When $b_1 \lambda_2  > 0$ and $w_1 < 0$, there exists $\varepsilon_0 > 0$ and a family of homoclinic-type solutions 
  \[  \left\{ (u^\varepsilon, \varepsilon) \in C_\bdd^{2+\alpha}(\overline{\Omega}) \times \R: -\varepsilon_0 < \varepsilon < \varepsilon_0 \right\} \]
  bifurcating from the unforced state $(u,\varepsilon) = (0,0)$.  It exhibits the asymptotics:
  \begin{equation}
    u^\varepsilon(x,y) = a_1 \varepsilon \sech{\left( \kappa_1 \varepsilon x \right)} \cos(y) + O(\varepsilon^2) \quad \textrm{in } C_\bdd^{2+\alpha}(\overline{\Omega}),
    \label{ap asymptotics2}
  \end{equation}
  where $\displaystyle a_1 = \sqrt{{-4b_1 \lambda_2 \over 3w_1}}, \ \kappa_1 = \sqrt{b_1 \lambda_2 }$.
  \end{enumerate}
\end{theorem}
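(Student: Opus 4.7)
The approach is to apply Theorem~\ref{reduction theorem} to reduce \eqref{ap simpler} to a planar ODE, to compute the first nontrivial coefficients of this ODE via Theorem~\ref{expansion theorem} aided by the symmetries of the problem, and then to recognize the classical $\tanh$ and $\sech$ profiles in a scaled limit whose persistence for $\varepsilon \ne 0$ follows from a reversible shooting argument.

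The PDE \eqref{ap simpler} fits into the Dirichlet variant of our framework discussed in Section~\ref{extensions section}. The trivial family is $\F(0,\varepsilon) \equiv 0$; the linearization $L = \F_u(0,0) = \Delta + 1$ with homogeneous Dirichlet conditions is formally self-adjoint; and the transversal problem $w'' + w = \nu w$ on $(-\pi/2,\pi/2)$ with Dirichlet boundary has eigenvalues $\nu_k = 1-(2k+1)^2$, so $\nu_0 = 0$ is simple with eigenfunction $\varphi_0(y) = \cos y$ and $\nu_1 = -8$. Theorems~\ref{reduction theorem} and~\ref{expansion theorem} therefore apply, producing a reduced equation $v'' = f(v,v',\varepsilon)$ for $v(x) := u(x,0)$. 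Two $\mathbb Z_2$ symmetries now sharply restrict $f$: reversal $u(x,y) \mapsto u(-x,y)$ and, because $b_2 = 0$ forces $b(\placeholder,\lambda)$ to be odd, the sign flip $u \mapsto -u$. Both are preserved by $\F^r$ and $\FSproj$, so uniqueness of the Faye--Scheel reduction function yields $\Psi(A,-B,\varepsilon)(x,y) = \Psi(A,B,\varepsilon)(-x,y)$ and $\Psi(-A,-B,\varepsilon) = -\Psi(A,B,\varepsilon)$. Consequently $\Psi_{ijk}$ is even (resp.\ odd) in $x$ when $j$ is even (resp.\ odd) and vanishes identically when $i+j$ is even; in particular $f$ is odd in $A$ and even in $B$, and every $A^2$, $AB$, $B^2$ contribution is absent. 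The smallest surviving terms therefore sit in the $A\lambda$ and $A^3$ slots, consistent with Section~\ref{subsubsec scaling} for $m=3$, $p=2$, $n=q=1$.

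The coefficients are computed by the iteration of Section~\ref{subsubsec iteration}. At order $A\lambda$ the perturbation $-b_1\lambda u$ contributes $-b_1\cos y$, so
\begin{equation*}
  L\Psi_{101} = b_1\cos y, \qquad \FSproj\Psi_{101} = 0,
\end{equation*}
whose unique solution is $\Psi_{101}(x,y) = \tfrac{b_1}{2} x^2 \cos y$ (solve $\alpha'' = b_1$ on the $\varphi_0$ mode with $\alpha(0) = \alpha'(0) = 0$). At order $A^3$, substituting $u^{(1)} = A\cos y$ into $2w_1 \nabla\cdot(|\nabla u|^2 \nabla u)$ produces $-6w_1 A^3 \sin^2 y \cos y$, and the identity $\sin^2 y \cos y = \tfrac14(\cos y - \cos 3y)$ decomposes the RHS into its $P_0$ and $P_1$ components,
\begin{equation*}
  L\Psi_{300} = \tfrac{3w_1}{2}\cos y - \tfrac{3w_1}{2}\cos 3y, \qquad \FSproj\Psi_{300} = 0,
\end{equation*}
which via Proposition~\ref{partial green lemma} and Lemma~\ref{lem bordering} gives $\Psi_{300}(x,y) = \tfrac{3w_1}{4} x^2 \cos y + \tfrac{3w_1}{16}\cos 3y$. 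Evaluating $\partial_x^2\Psi_{ijk}(0,0)$ as in \eqref{reduced f definition} and substituting $\lambda = \lambda_2\varepsilon^2$, the reduced ODE reads
\begin{equation*}
  v'' = b_1\lambda_2\varepsilon^2\, v + \tfrac{3w_1}{2}\, v^3 + O\bigl(\varepsilon^4(|v|+|v'|) + (|v|+|v'|)^5\bigr).
\end{equation*}

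Rescaling $v = a_1\varepsilon V$ and $X = \kappa_1\varepsilon x$ with the constants declared in the statement converts this to $V'' = -2V + 2V^3 + O(\varepsilon^2)$ in case~\ref{thm ap front} and to $V'' = V - 2V^3 + O(\varepsilon^2)$ in case~\ref{thm ap homoclinic}. The $\varepsilon = 0$ limits possess the explicit orbits $V_0(X) = \tanh X$, connecting the hyperbolic saddles $V = \pm 1$, and $V_0(X) = \sech X$, homoclinic to the hyperbolic saddle $V = 0$. Persistence for $0 < |\varepsilon| \ll 1$ follows from reversible shooting: the two $\mathbb Z_2$ symmetries are preserved by the $\varepsilon$-perturbation; the one-dimensional unstable manifold of the relevant hyperbolic equilibrium depends smoothly on $\varepsilon$; and the transversality of its unperturbed intersection with the symmetry axis---$\{V=0\}$ for the heteroclinic, $\{V'=0\}$ for the homoclinic---lets the implicit function theorem continue the intersection point, whence reflection produces the full connecting orbit. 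Unwinding the rescaling and using \eqref{FS expression} together with $\n{\Psi(v(x),v'(x),\varepsilon)}_{C_\mu^{2+\alpha}} = O(\varepsilon^2)$ yield \eqref{ap asymptotics1} and \eqref{ap asymptotics2}. The main obstacle is this persistence step, but reversibility reduces it to an elementary one-dimensional implicit function argument; the remaining work is the essentially routine mode-by-mode solves for $\Psi_{101}$ and $\Psi_{300}$.
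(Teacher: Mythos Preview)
Your approach mirrors the paper's exactly: center manifold reduction via Theorems~\ref{reduction theorem}--\ref{expansion theorem}, exploitation of the reversibility and $u\mapsto -u$ symmetries to kill even-order terms, explicit computation of $\Psi_{101}$ and $\Psi_{300}$, and persistence of the $\tanh$/$\sech$ orbits by a transverse-intersection argument with the symmetry axis. One minor slip: your $\Psi_{300} = \tfrac{3w_1}{4}x^2\cos y + \tfrac{3w_1}{16}\cos 3y$ fails the projection condition $\Psi_{300}(0,0)=0$ since $\cos(3\cdot 0)=1$, and the correct formula adds a kernel correction $-\tfrac{3w_1}{16}\cos y$; this does not affect $\partial_x^2\Psi_{300}(0,0)=\tfrac{3w_1}{2}$ and hence leaves the reduced ODE and the rest of the argument intact.
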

See Figure~\ref{fig:elasticity} for an illustration of the solutions in case \ref{thm ap front}.
\begin{remark}
  It is worth emphasizing that more detailed information about $u^\varepsilon$ can be obtained by combining Remark~\ref{Phi asymptotics} and the form of the reduced ODE \eqref{reduced ODE} found in Section~\ref{subsec ap truncate} below. For instance, it is possible to check that $u^\varepsilon$ inherits the monotonicity properties (in the axial variable $x$) of its leading-order approximation in \eqref{ap asymptotics1} or \eqref{ap asymptotics2}.
\end{remark}
\begin{remark}\label{rk ap cubic force}
Including the cubic term in \eqref{ap expansion b} for the body force allows one to treat more general rubber elastic material. In that setting, there exist families of front-type solutions \eqref{ap asymptotics1} when $b_1 \lambda_2 < 0$ and $b_2 + 2w_1 > 0$, and homoclinic solutions of the form \eqref{ap asymptotics2} when $b_1 \lambda_2 > 0$ and $b_2 + 2w_1 < 0$; see Appendix~\ref{subsec ap iteration}.
\end{remark}

\subsection{Center manifold reduction} 
The linearized operator of \eqref{ap simple} at $(u,\varepsilon) = (0,0)$ with assumptions \eqref{ap W simple} and \eqref{ap force1} is simply
\begin{equation*}
L := 1+\Delta \colon \Xspace_\mu \to \Yspace_\mu,
\end{equation*}
where 
\[ \Xspace_\mu := \left\{ u \in C_\mu^{2+\alpha}(\overline{\Omega}) : u|_{\partial \Omega} = 0 \right\}, \qquad \Yspace_\mu := C_\mu^{0+\alpha}(\overline{\Omega}).\]
Here, we are exploiting the fact that the boundary conditions are linear by including them in the definition of $\Xspace_\mu$.  The kernel of $L$ is the two-dimensional space
\[
\ker L = \left\{ u(x,y) = (A + Bx) \varphi_0(y): \ A, B \in \mathbb{R} \right\}.
\]
The bounds for the partial Green's function follow exactly from Proposition~\ref{partial green lemma}. As for the projection $\FSproj$ onto the kernel in Remark~\ref{projection remark}, we choose it to be
\begin{equation*}
  \FSproj u := \left( v(0) +  v^\prime(0) x \right) \varphi_0(y) \qquad \textrm{where } v(x) := u(x,0). 
\end{equation*}

Applying Theorem~\ref{reduction theorem}, we find that all small solutions $(u,\varepsilon)$ of \eqref{ap simpler} are of the form
\begin{align*}
  u(x,y) = v(0)\varphi_0 + v'(0)x\varphi_0 + \Psi(v(0),v'(0),\varepsilon)(x,y)
\end{align*}
for a $C^{M+1}$ coordinate map $\Psi \maps \R^3 \to C^{2+\alpha}_\mu$. The function $v$ then satisfies the reduced ODE
\begin{equation}\label{ap ODE}
  v'' = f(v, v', \varepsilon),
  \qquad 
  \text{where} \quad
  f(A, B, \varepsilon) := {d^2 \over dx^2} \Big|_{x=0} \Psi(A,B,\varepsilon)(x, 0).
\end{equation}
From the reversibility symmetry $u(x,y) \mapsto u(-x,y)$ of \eqref{ap simpler}, we deduce that
\begin{subequations}\label{ap Psi symmetry}
  \begin{align}\label{ap reversal Psi}
    \Psi(A,-B,\varepsilon)(-x,y) 
    = \Psi(A,B,\varepsilon)(x,y),
  \end{align}
  while the additional symmetry $u \mapsto -u$ implies that 
  \begin{align}
    \label{ap reflection Psi}
    \Psi(-A,-B,\varepsilon)(x,y) = \Psi(A,B,\varepsilon)(x,y).
  \end{align}
\end{subequations}
Plugging \eqref{ap Psi symmetry} into \eqref{ap ODE}, we find that $f$ has the symmetries
\begin{equation}\label{ap f symmetry}
    f(A,-B,\varepsilon) = f(A,B,\varepsilon),
    \qquad 
    f(-A,-B,\varepsilon) = f(A,B,\varepsilon).
\end{equation}

We now use Theorem~\ref{expansion theorem} to expand the coordinate map $\Psi$ and hence the function $f$. That is, we seek solutions $u \in \Xspace_\mu$ with the Faye--Scheel ansatz
\begin{equation}
  \label{ap FS ansatz}
  u(x,y) = (A + Bx) \varphi_0(y) + \sum_{\mathcal J} \Psi_{ijk}(x,y) A^i B^j \varepsilon^k + \mathcal R,
\end{equation}
where the set
\begin{equation}\label{ap iteration}
\mathcal{J} := \left\{ (i,j,k) \in \mathbb N^3: \ i+2j+k \le 3, \ i + j + k \ge 2, \ i + j \ge 1 \right\},
\end{equation}
and the error term $\mathcal R$ is of the order $O\left( (|A| + |B|^{1/2} + |\varepsilon|)^4 \right)$ in $\Xspace_\mu$. This truncation anticipates a scaling where $A \sim \varepsilon, \ B \sim \varepsilon^2$ (for more details please refer to Appendix \ref{subsec ap iteration}). Recall from Theorem~\ref{expansion theorem} that $\Psi_{ijk}(0,0) = \partial_x \Psi_{ijk}(0,0) = 0$.

Plugging \eqref{ap FS ansatz} into the nonlinear term in \eqref{ap simpler}, we obtain
\[
\nabla \cdot (|\nabla u|^2 \nabla u) = -A^3 \nabla \cdot \begin{pmatrix} 0 \\ \sin^3 y \end{pmatrix} + O \left( (|A| + |B|^{1/2} + |\varepsilon|)^4  \right) \qquad \textrm{in } \Xspace_\mu.
\]
Therefore, for each $(i,j,k) \in \mathcal{J}$,  the equation for $\Psi_{ijk}$ is
\begin{equation*}
  \begin{cases}
    \displaystyle \sum_{\mathcal J} L(\Psi_{ijk}) A^iB^j \varepsilon^k = b_1 \lambda_2 \varepsilon^2 (A+Bx) \cos y + b_1\sum_{\mathcal J} \Psi_{ijk} A^iB^j \varepsilon^{k+2} + 6 w_1 A^3\sin^2(y) \cos y   \\
    \FSproj \Psi_{ijk} = 0.
  \end{cases}
\end{equation*}
By Lemma~\ref{lem bordering}, the above problem has a unique solution, and indeed we find:
%
\begin{align*}
& \Psi_{101} = \Psi_{011} = \Psi_{110} = \Psi_{200} = 0, \\
& \Psi_{102} = {b_1 \lambda_2 \over 2} x^2 \cos y, \\
& \Psi_{300} = {3w_1 \over 16} \left( 4x^2 \cos y - \cos y + \cos (3y) \right). 
\end{align*}

\subsection{Reduced ODE and truncation} \label{subsec ap truncate}
From Theorem \ref{reduction theorem} we know that a small solution $u$ of \eqref{ap simpler} solves the reduced ODE of the form \eqref{reduced ODE} where $v(x) = u(x, 0)$. Using the computed values of $\Psi_{ijk}$ we see that
\begin{align*}
f(A, B, \varepsilon) & = \sum_{\mathcal J} {d^2 \over dx^2} \Big|_{x=0} \Psi_{ijk}(x, 0) A^i B^j \varepsilon^k + r(A, B, \varepsilon) 
 = b_1 \lambda_2 A\varepsilon^2 + {3w_1 \over 2} A^3 + r(A, B, \varepsilon)
\end{align*}
where the error term $r \in C^{M+1}$ and
\[
r(A, B, \varepsilon) = O\left( |A|(|A| + |B|^{1/2} + |\varepsilon|)^3 + |B|(|A| + |B|^{1/2} + |\varepsilon|)^2  \right).
\]
Setting $r = 0$, we obtain the truncated reduced ODE
\[
v^0_{xx} = b_1 \lambda_2 \varepsilon^2 v^0 + {3w_1 \over 2}(v^0)^3.
\]
When $b_1 \lambda_2 < 0$ and $w_1 > 0$, this has an explicit heteroclinic orbit,
\[
  v^0(x) = a_1 \varepsilon \tanh\left( \kappa_1 \varepsilon x \right), \quad \text{where} \quad a_1 := \sqrt{{-2b_1 \lambda_2 \over 3w_1}}, \ \ \kappa_1 := \sqrt{{-b_1 \lambda_2  \over 2}}.
\]
On the other hand, when $b_1 \lambda_2 > 0$ and $w_1 < 0$, there is a homoclinic solution
\[
  v^0(x) = a_1\varepsilon \sech\left( \kappa_1 \varepsilon x \right), \quad \text{where}\quad a_1 := \sqrt{{-4b_1 \lambda_2  \over 3w_1}}, \ \ \kappa_1 := \sqrt{b_1 \lambda_2}.
\]

\subsection{Proof of existence}
It remains now to confirm that the homoclinic and heteroclinic orbits above persist for the full reduced ODE (that is, when $r$ is reintroduced).  For the heteroclinic case, it is often useful to examine invariant quantities. Here, however, the symmetry properties in  \eqref{ap f symmetry} are strong enough that a simpler argument is possible.
\begin{proof}[Proof of Theorem~\ref{thm ap}]
  Introducing the scaled variables
  \begin{align*}
    x = \varepsilon X,
    \qquad 
    v(x) = \varepsilon V(X),
  \end{align*}
  the reduced equation \eqref{ap ODE} can be written as the planar system
  \begin{equation*} \left\{ \begin{aligned} 
V_X & = W \\
W_X & = b_1 \lambda_2  V + \frac{3w_1}2 V^3 + R(V, V_X, \varepsilon),
\end{aligned} \right.
\end{equation*}
where the rescaled error term $R(V,W,\varepsilon) = O\left( |\varepsilon| (|V| + |W|) \right)$.  At $\varepsilon = 0$, this corresponds to a rescaling of the truncated equation. 
 
Consider the situation in part~\ref{thm ap front}, where $b_1 \lambda_2 <0$, $w_1>0$.  At $\varepsilon = 0$, the explicit solution $V = a_1 \tanh(\kappa_1 X)$ crosses the $W$-axis transversely. As usual, this implies that for small nonzero $\varepsilon$, the unstable manifold of the negative equilibrium will transversely intersect the $W$-axis. Combining the reversibility symmetry $(V(X),W(X)) \mapsto (V(-X), -W(-X))$ with the reflection symmetry $(V(X),W(X)) \mapsto (-V(X), -W(X))$, we obtain existence of a (reversible) heteroclinic orbit connecting the two nontrivial equilibria.

A similar argument works for part~\ref{thm ap homoclinic}, where $b_1 \lambda_2 >0$, $w_1<0$. When $\varepsilon =0$, the explicit solution $V = a_1 \sech(\kappa_1 X)$ crosses the $V$-axis transversely. This intersection persists for small $\varepsilon$, and reversibility then guarantees the existence of a (reversible) homoclinic orbit to the origin.
\end{proof}

\section{Fronts in 2D Fisher--KPP}\label{sec fkpp}

As a second application of our general theory, we consider a reaction diffusion equation arising in mathematical biology.  The classical Fisher--KPP equation \cite{fisher1937wave,kolmogorov1937etude} is the one-dimensional problem
\begin{equation}
  v_t = v_{xx} + \sigma v(\rho^2-v),\label{fkpp 1-d} 
\end{equation}
where $v = v(t,x) : \mathbb{R}_+ \times \mathbb{R} \to \mathbb{R}$.   This models the propagation of an allele within a population; $\sigma > 0$ measures the advantageousness of the mutant gene, while $\rho^2 > 0$ describes the carrying capacity.  It is well known that Fisher--KPP supports traveling fronts moving at any wave speed greater than $2\rho \sqrt{\sigma}$.  However, it has been observed experimentally by M\"obius, Murray, and Nelson \cite{moebius2015obstacles} that, in the presence of obstacles, invasion fronts may slow down and display two-dimensional characteristics.  Recently, Minors and Dawes \cite{minors2017invasions} proposed a two-dimensional version of Fisher--KPP with certain ``reactive'' boundary conditions as a possible explanation for this phenomenon. For traveling waves, it takes the form
\begin{equation}
  \label{fkpp} 
  \left\{
  \begin{aligned}
    \Delta u + \lambda u_x + u( \rho^2 - u) & =  0 & &\qquad \textrm{in } \mathbb{R} \times (0,1) \\
    u_y & = 0 & & \qquad \textrm{on } \{ y = 0 \} \\
    u_y + \beta u & = 0 & & \qquad \textrm{on } \{ y = 1\}.
  \end{aligned} \right. 
\end{equation}
Here the unknown $u = u(x,y)$, $\beta > 0$ is an absorption constant, $\lambda$ is the wave speed, and $\rho^2 > 0$ is the carrying capacity of the allele.  Note that Minors and Dawes discuss a slightly more general problem.  For instance, we scaled the domain to be the infinite strip of unit height $\Omega := \mathbb{R} \times (0,1)$.  Also, they allow Robin or Neumann conditions to be imposed on either boundary.

In \cite{minors2017invasions}, numerical evidence is given that the two-dimensional Fisher--KPP equation \eqref{fkpp} does indeed have fronts that move arbitrarily slowly in certain regimes. As the main contribution of this section, we rigorously prove the existence of these waves via center manifold reduction.  

\begin{theorem}[$2$D Fisher--KPP fronts] \label{fkpp theorem}
Fix $\beta > 0$, let $\rho_0 > 0$ be the unique solution to $\rho_0 \tan(\rho_0) = \beta$ on $(0,\pi/2)$, and choose a positive constant $\lambda_1 > 2$.   There exists $0 < \varepsilon_0 \ll 1$, and a family of fronts solution $(u,\lambda, \rho^2)$ to the  two-dimensional Fisher--KPP equation,
\[ \left\{ (u,\lambda, \rho^2) = (u^\varepsilon, \, \lambda_1 \varepsilon,\,  \rho_0^2 + \varepsilon^2) \in C_\bdd^{2+\alpha}(\overline{\Omega}) \times \mathbb{R} \times \mathbb{R} : -\varepsilon_0 < \varepsilon < \varepsilon_0 \right\} \]
with 
\begin{equation*}
    u^\varepsilon(x,y) = \varepsilon^2 V^\varepsilon( \varepsilon x)  \cos(\rho_0 y) + O(\varepsilon^3) \qquad \textup{in } C_\bdd^{2+\alpha}(\overline{\Omega}).
\end{equation*}
Here, $V^\varepsilon$ is to leading order a front for the one-dimensional Fisher--KPP equation \eqref{fkpp 1-d} with carrying capacity $1/\sigma$ and $\sigma$ given by \eqref{fkpp a formula}.
\end{theorem}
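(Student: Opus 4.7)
The plan is to recast the problem as a one-parameter family by setting
\begin{equation*}
\tilde\F(u, \varepsilon) := \Delta u + \lambda_1 \varepsilon u_x + u\big(\rho_0^2 + \varepsilon^2 - u\big),
\end{equation*}
subject to the given boundary conditions, so that Theorem~\ref{reduction theorem} applies directly with $\varepsilon$ as the bifurcation parameter. The trivial family $\tilde\F(0,\varepsilon) \equiv 0$ is immediate, and the linearization at the origin is $L = \Delta + \rho_0^2$ with $u_y|_{y=0} = 0$ and $(u_y + \beta u)|_{y=1} = 0$. These boundary conditions are co-normal and linear, so $L$ is formally self-adjoint, which gives \eqref{F symmetry assumption}. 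The transverse spectral problem $w'' + \rho_0^2 w = \nu w$ with the same conditions has eigenvalues $\nu_k = \rho_0^2 - \omega_k^2$ where $0 < \omega_0 < \omega_1 < \cdots$ are the positive roots of $\omega \tan \omega = \beta$; by the choice of $\rho_0$, $\omega_0 = \rho_0$ and hence $\nu_0 = 0$ is simple with eigenfunction $\varphi_0(y) := \cos(\rho_0 y)$ (normalized so $\varphi_0(0)=1$), while the remaining eigenvalues are strictly negative. Hypothesis \eqref{lambda0 assumption} thus holds, and Theorem~\ref{reduction theorem} produces a reduced ODE $v'' = f(v, v', \varepsilon)$ and a coordinate map $\Psi$ as in \eqref{FS expression}.

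Next I would compute $f$ to leading order. Rather than fully iterating Theorem~\ref{expansion theorem}, a direct Lyapunov--Schmidt-style projection suffices: the identity $\rho_0 \tan \rho_0 = \beta$ is precisely what makes the boundary terms from integrating $u_{yy}$ against $\varphi_0$ cancel, so that $\int_0^1 u_{yy}\,\varphi_0\,dy = -\rho_0^2 \int_0^1 u\,\varphi_0\,dy$. Testing the PDE against $\varphi_0$ on $(0,1)$ and substituting the ansatz from Remark~\ref{Phi remark} then yields
\begin{equation*}
v'' + \lambda_1 \varepsilon\, v' + \varepsilon^2 v - \sigma v^2 + R(v,v',\varepsilon) = 0,
\end{equation*}
where
\begin{equation*}
\sigma := \frac{\int_0^1 \cos^3(\rho_0 y)\,dy}{\int_0^1 \cos^2(\rho_0 y)\,dy}
\end{equation*}
and the remainder $R$ is flat in the Taylor sense. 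Anticipating a balance among $v''$, $\varepsilon^2 v$, and $v^2$ (as in Section~\ref{subsubsec scaling}, with the cross term $\lambda_1 \varepsilon v'$ consistent by the compatibility \eqref{compatibility}), I rescale $X := \varepsilon x$ and $V(X) := v(x)/\varepsilon^2$, at which point the reduced ODE becomes
\begin{equation*}
V'' + \lambda_1 V' + V - \sigma V^2 = O(\varepsilon).
\end{equation*}
The leading-order truncation is precisely the traveling wave ODE for the one-dimensional Fisher--KPP equation \eqref{fkpp 1-d} with growth rate $1$, carrying capacity $1/\sigma$, and wave speed $\lambda_1$, whose critical speed is $2\sqrt\sigma \cdot (1/\sqrt\sigma) = 2$. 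Since $\lambda_1 > 2$ by hypothesis, the classical theory supplies a monotone heteroclinic connecting the saddle at $V = 1/\sigma$ to the stable node at $V = 0$.

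The main obstacle is to prove persistence of this heteroclinic under the $O(\varepsilon)$ perturbation, because unlike the anti-plane shear application there is no reversibility symmetry available. The argument I would give relies on the structural stability of planar saddle-to-sink connections. For $\lambda_1 > 2$, the equilibrium at $V = 0$ is a hyperbolic sink with two real negative eigenvalues and an open basin of attraction, while $V = 1/\sigma$ is a hyperbolic saddle with a one-dimensional unstable manifold. By the implicit function theorem and the unstable manifold theorem, both equilibria and the unstable manifold persist and depend smoothly on $\varepsilon$. A finite-time compactness argument then shows that for all sufficiently small $\varepsilon$ the perturbed unstable manifold still enters the basin of the perturbed sink, completing the heteroclinic orbit $V^\varepsilon$. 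Applying Theorem~\ref{reduction theorem}\ref{reduction theorem recovery} lifts this to a genuine front $u^\varepsilon$ of \eqref{fkpp} with the claimed asymptotic form, the error bound $O(\varepsilon^3)$ coming from the $C^{M+1}$ smoothness of $\Psi$ together with the $O(\varepsilon)$ correction to $V^\varepsilon$.
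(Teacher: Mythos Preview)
Your proposal is correct and follows the same overall architecture as the paper: verify the spectral hypothesis for $L=\Delta+\rho_0^2$, apply Theorem~\ref{reduction theorem} to obtain the reduced ODE, identify its leading-order form as a one-dimensional KPP traveling-wave equation under the scaling $X=\varepsilon x$, $V=v/\varepsilon^2$, and then argue persistence of the heteroclinic. The two substantive differences are worth noting.

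First, you compute the leading part of $f$ by integrating the PDE against $\varphi_0$, whereas the paper carries out the iteration of Theorem~\ref{expansion theorem} and solves for the $\Psi_{ijk}$ explicitly (Appendix~\ref{subsec fkkp iteration}). Your shortcut works here because the problem is semilinear and the asymptotics $\Phi=O((|A|+|B|)(|A|+|B|+|\varepsilon|))$ from Remark~\ref{Phi remark}, together with the anticipated scaling $A\sim\varepsilon^2$, $B\sim\varepsilon^3$, force every term involving $\Phi$ to be $O(\varepsilon^5)$ against the $O(\varepsilon^4)$ leading balance; this is precisely the solvability condition that the paper's iteration encodes when it extracts the constant $c_2=\sigma$ from the equation for $\Psi_{200}$. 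You should make that error bookkeeping explicit rather than calling $R$ ``flat in the Taylor sense,'' since the rescaled remainder being $O(\varepsilon)$ is exactly what the persistence step needs.

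Second, for persistence you invoke the structural stability of a saddle-to-sink connection: the hyperbolic node at the origin has an open basin, the perturbed unstable manifold of the saddle depends continuously on $\varepsilon$, and a finite-time argument puts it inside the basin. The paper instead reproduces the classical KPP construction of a positively invariant triangular region $\mathscr T^0$ entered transversally by both the vector field and the unstable manifold, and then argues that $\mathscr T^0$ perturbs to $\mathscr T^\varepsilon$. Both arguments are valid; yours is softer and shorter, while the paper's is constructive and gives more explicit control on the orbit (for instance, sign information on $V$ and $W$ along the connection).
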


\subsection{Center manifold reduction}

The first step is to choose parameters so that the spectral condition \eqref{lambda0 assumption} is satisfied.   The eigenvalue problem for the transversal linearized operator at $(u,\lambda) = (0,0)$ is simply 
\begin{equation*}
\left\{
  \begin{aligned}
    w_{yy} + \rho^2 w & = \nu w  & &\qquad \textrm{in } (0,1) \\
    w_y & = 0 & & \qquad \textrm{on } \{ y = 0 \} \\
  w_y +\beta w & = 0 & & \qquad \textrm{on } \{ y = 1\}.\end{aligned} \right. 
\end{equation*}
An elementary calculation shows that there are no eigenvalues $\nu \geq \rho^2$, and $\nu < \rho^2$ is in the spectrum if and only if 
\begin{equation}
  \label{fkpp kernel condition}
  \tan( \sqrt{\rho^2-\nu} ) = \frac{\beta\sqrt{\rho^2-\nu}}{\rho^2 -\nu}.
\end{equation}
Taking $\beta > 0$ to be fixed, the critical value for the parameter $\rho$ is defined to be the unique $\rho_0$ so that the only nonnegative solution of \eqref{fkpp kernel condition} is $\nu = 0$.  Clearly, this occurs precisely when $\tan( \rho_0) = \beta/\rho_0$, and in that case the kernel is generated by 
\[ \varphi_0(y) := \cos(\rho_0 y).\]

Now, we reconsider the full problem posed on $\Omega$.  As in the previous application, we take advantage of the linearity of the boundary conditions by encoding them directly into the definition of the space:  let
 \[ \Xspace := \left\{ u \in C^{2+\alpha}(\overline{\Omega}) : u_y|_{y=0} = 0, ~\left(\beta u + u_y\right)|_{y=1} = 0 \right\}, \qquad \Yspace := C^{0+\alpha}(\overline{\Omega}).  \]
 with the exponentially weighted counterparts $\Xspace_\mu$ and $\Yspace_\mu$, respectively. The linearized operator at $(u,\lambda) = (0,0)$ is thus 
\begin{equation}
  L:= \Delta + \rho_0^2 \colon \Xspace_\mu \to \Yspace_\mu,\label{def fkpp L} 
\end{equation}
and its kernel is the two-dimensional subspace
\begin{equation*}
  \kernel{L} = \left\{ u(x,y) = (A + Bx) \varphi_0(y) : A, B \in \mathbb{R} \right\}.
\end{equation*}
We have some freedom to choose a projection $\FSproj$ onto $\kernel{L}$.  As the boundary condition the bottom of the strip is simplest, a reasonable option is to take  
\begin{equation*}
  \FSproj u := \left( v(0) +  v^\prime(0) x \right) \varphi_0(y) \qquad \textrm{where } v(x) := u(x,0). 
\end{equation*}

Applying Theorem~\ref{reduction theorem}, we infer the existence of a center manifold that must contain any sufficiently small solution to \eqref{fkpp}.  To find the corresponding reduced equation, we will use Theorem~\ref{expansion theorem} and follow the general procedure outlined in Section~\ref{general strategy sec}.  That is, we seek solutions $u \in \Xspace_\mu$ with the Faye--Scheel ansatz
\begin{equation} 
  \label{fkpp FS u}
  u(x,y) = (A + Bx) \varphi_0(y) + \sum_{\mathcal{J}} \Psi_{ijk}(x,y) A^i B^j \varepsilon^k + \mathcal R(x,y),
\end{equation}
where $\varepsilon$ is a small auxiliary parameter that smoothly measures the deviation of $(\lambda, \rho^2)$ from their critical value $(0, \rho_0^2)$: $\lambda = \lambda_1 \varepsilon, \ \rho^2 = \rho_0^2 + \varepsilon^2$ as in Theorem \ref{fkpp theorem}.
The sum in \eqref{fkpp FS u} ranges over 
\begin{equation}\label{fkpp iteration}
\mathcal{J} := \left\{ (i,j,k) \in \mathbb{N}^3 : 2i+3j+k \le 4, \ i + j + k \ge 2, \ i + j \ge 1 \right\},
\end{equation}
and the error term 
\[ \mathcal{R} = O\left( (|A|^{1/2} + |B|^{1/3} + |\varepsilon|)^5 \right) \qquad \textrm{in } \Xspace_\mu.\]
Note that, in contrast to the previous section, the truncation condition anticipates an eventual scaling where $A \sim \varepsilon^2, \ B \sim \varepsilon^3$. Computing the coefficients $\Psi_{ijk}$ can be performed according to the general strategy. The details can be found in Appendix \ref{subsec fkkp iteration}.

\subsection{Reduced ODE and truncation}
Having the coefficients $\Psi_{ijk}$ in hand, we may then apply Theorem~\ref{reduction theorem}(i) to calculate the reduced ODE. Letting $v := u(\placeholder, 0)$, we see it is given by \eqref{reduced ODE} with 
\[
f(A,B,\varepsilon) = \sum_{ \mathcal J} {d^2 \over dx^2}\Big|_{x=0} \Psi_{ijk}(x,0) A^i B^j \varepsilon^k + r(A,B,\varepsilon),
\]
where the remainder term $r \in C^{M+1}$ satisfies
\[ r(A,B,\varepsilon) = O\left( |A| ( |A|^{1/2} + |B|^{1/3} + |\varepsilon| )^3 + |B| ( |A|^{1/2} + |B|^{1/3} + |\varepsilon| )^2 \right) \]
in some neighborhood of $(0,0,0)$.  Inserting the computed values of $\Psi_{ijk}$, reveals that \begin{equation}
  v'' =  \sigma v^2 - \varepsilon^2 v -\lambda_1 \varepsilon v' + r(v, v^\prime, \varepsilon),
 \label{fkpp reduced ODE}
\end{equation}
where
\begin{equation}
  \sigma := \frac{4}{3} \frac{\sin(\rho_0) (3-\sin^2(\rho_0))}{2\rho_0+\sin(2\rho_0)} > 0,\label{fkpp a formula} 
\end{equation}
because $\rho_0 \in (0,\pi/2)$.  Rearranging \eqref{fkpp reduced ODE} slightly and truncating the remainder term, this becomes the following one-dimensional Fisher--KPP equation:
\begin{equation*}
  v^0_{xx} + \lambda_1 \varepsilon v^0_x+  \sigma v^0 \left( \frac{\varepsilon^2}{\sigma} -  v^0 \right)  = 0 .
\end{equation*} 

\subsection{Proof of existence}
In contrast to the elasticity problem in Section~\ref{sec ap}, the 2D Fisher--KPP system \eqref{fkpp} lacks reversibility and reflection symmetry. In their place, we make use of the robustness of the heteroclinic solutions to the 1D Fisher--KPP equation.
\begin{proof}[Proof of Theorem~\ref{fkpp theorem}]
Working in the scaled variables,
\[ x = \varepsilon X, \qquad v^0(x) = \varepsilon^2 V^0(X),\]
we see that $V^0$ solves
\[ -\lambda_1 V^0_X = V^0_{XX} + \sigma V^0 \left( \frac{1}{\sigma} -  V^0 \right). \] 
In the usual way, this can be converted to a first-order planar system
\begin{equation}
  \left\{ \begin{aligned} 
    V^0_X & = W^0 \\
    W^0_X & = -\sigma V^0 \left( \frac{1}{\sigma} -   V^0 \right)  -\lambda_1 W^0,
  \end{aligned} \right.
  \label{fkpp truncated planar ODE} 
\end{equation}
which has rest points $(V_+^0, W_+^0) := (0,0)$ and $(V_{-}^0, W_{-}^0) := (1/\sigma, 0)$.  A quick calculation shows that, for any $\lambda_1 > 2$, $(V_+^0,W_+^0)$ is a sink while $(V_-^0,W_-^0)$ is a saddle.  Following the classical argument of Kolmogorov, Petrovsky, and Piskunov \cite{kolmogorov1937etude}, one can show that there exists a triangular region 
\[ {\mathscr{T}^0} = \left \{ (V, W) \in \mathbb{R}^2 : W < 0,~ W+c_1 V > 0, ~W-c_2 (V- V_-^0 ) > 0 \right\}, \]
for some explicit $c_1, c_2 > 0$, so that (i) the vector field for \eqref{fkpp truncated planar ODE} enters $\mathscr{T}^0$ transversally along each of the boundary components, and (ii) the unstable manifold at $(V_-^0,W_-^0)$ enters $\mathscr{T}^0$ non-tangentially there.  As a result, $\mathscr{T}^0$ is positively invariant, and one can conclude that there exists a heteroclinic orbit $(V^0, W^0)$ contained in $\mathscr{T}^0$ and satisfying $V^0(X) \to  V_\pm^0$ as $X \to \pm\infty$. 
\begin{figure}
  \centering
  \includegraphics[scale=1.1]{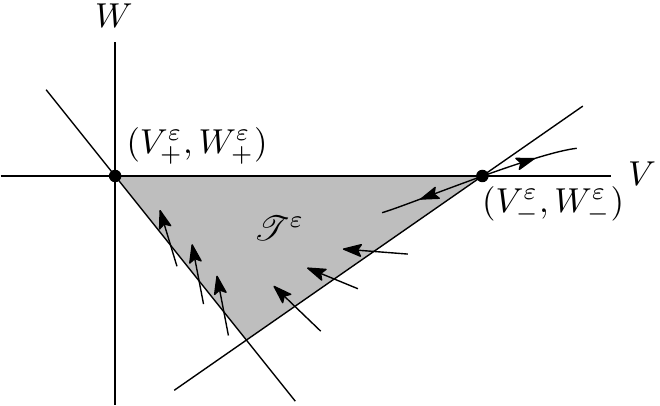}
  \caption{The positively invariant triangular region $\mathscr T^\varepsilon$} 
  \label{fig:triangle}
\end{figure}

Finally, we must show that this orbit persists for the full reduced equation \eqref{fkpp reduced ODE}.  Applying the same rescaling $x \mapsto X$ and $v \mapsto V$ gives the planar system
\begin{equation}
  \left\{ \begin{aligned} 
    V_X & = W \\
    W_X & = -\sigma V \left( \frac{1}{\sigma} -   V \right)  -\lambda_1 W + R(V,W,\varepsilon),
  \end{aligned} \right.
  \label{fkpp reduced planar ODE} 
\end{equation}
  where the remainder term $R(V, W, \varepsilon) = O(\varepsilon ( |V| + |W| ) )$.  At $\varepsilon = 0$, this is precisely the truncated problem \eqref{fkpp truncated planar ODE}.  Moreover, for each $\varepsilon \geq 0$ sufficiently small, \eqref{fkpp reduced planar ODE} has two rest points, $(V_\pm^\varepsilon, W_\pm^\varepsilon )$, with $(V_+^\varepsilon, W_+^\varepsilon) = (0,0)$, and  $(V_-^\varepsilon, W_-^\varepsilon) = ( V_-^0 + O(\varepsilon), 0)$.   It follows from the robustness of transversal intersections that there is a positively invariant triangular region $\mathscr{T}^\varepsilon$ for \eqref{fkpp reduced planar ODE} that limits to $\mathscr{T}^0$ as $\varepsilon \to 0$; see Figure~\ref{fig:triangle}.   By the same reasoning as above, we have that $\mathscr{T}^\varepsilon$ contains a heteroclinic orbit $(V^\varepsilon, W^\varepsilon)$ satisfying $V^\varepsilon \to V_\pm^\varepsilon$ as $X \to \pm\infty$.  The theorem now follows by undoing the scaling.
\end{proof}

\section{Rotational bores in a channel} \label{sec ww}
Our final application, and our initial motivation for writing this paper, pertains to water waves. Like the anti-plane shear problem in Section~\ref{sec ap}, it has a reflection symmetry in $x$, and so we expect to have to expand $f(A,B,\varepsilon)$ to third order in $A$ to obtain fronts. Unlike the anti-plane shear problem, however, there is no additional reflection symmetry in $u$. Thus the existence and persistence of heteroclinic orbits can no longer be described in terms of a transverse intersection in the plane, and we must instead introduce a second physical parameter. To solve for this auxiliary parameter in terms of $\varepsilon$, we will make heavy use of a conserved quantity called the \emph{flow force}~\cite{benjamin1984impulse}. In particular, we will investigate the so-called \emph{conjugate flow} equations which give a necessary condition for the existence of a front connecting two $x$-independent solutions~\cite{benjamin1971unified}. This analysis is quite involved, so much so, in fact, that the expressions for the Taylor coefficients of the coordinate map $\Psi$ in Theorems~\ref{reduction theorem} and \ref{expansion theorem} are too large to reproduce here. For this reason we will also highlight several important special cases where the formulas simplify drastically.

\subsection{Statement of the problem}
\begin{figure}
  \centering 
  \includegraphics[scale=1.1]{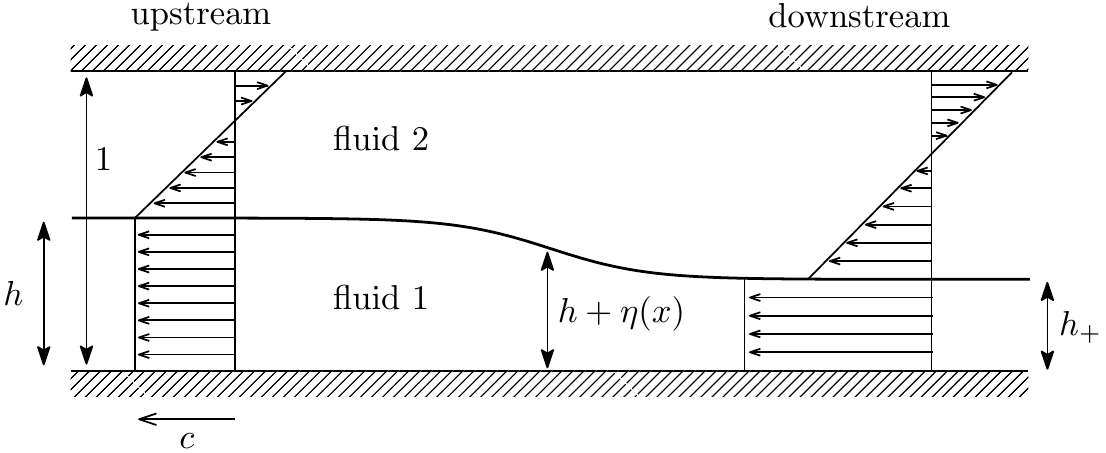}
  \caption{The class of bores under consideration. There are superposed fluid layers bounded by rigid plates at $Y=0$ and $Y=1$. The upper layer has constant density $\rho$ and constant vorticity $\omega$, while the bottom layer has unit density and zero vorticity. In the ``upstream limit'' $x \to -\infty$, the lower layer has thickness $h$, while in the downstream limit this thickness is $h_+$. At intermediate values of $x$, the layers are separated by a sharp interface a height $Y=h+\eta(x)$. In the moving frame, the upstream velocity in the lower layer is $-c$. Finally, the upstream velocity is continuous across the interface, but the downstream velocity may not be.}
  \label{fig:basicbore}
\end{figure}

Working in dimensionless variables, we consider an infinite channel bounded by horizontal walls at $Y=0$ and $Y=1$. Inside the channel there is a lower layer of fluid with density normalized to $1$, and an upper layer of lighter fluid with density $0 < \rho \le 1$. There is a sharp interface between the two layers at the height $Y=h+\eta(x)$ where $h \in (0,1)$ is a reference height to be chosen later. See Figure~\ref{fig:basicbore} for an illustration. 

Suppose that there is no surface tension along the interface and hence that the pressure is continuous across it.   For water, it is reasonable to assume that the particle velocity field is incompressible (that is, divergence free) in each fluid region.  Thus there are so-called \emph{stream functions}, $\psi_1$ in the lower fluid and $\psi_2$ in the upper fluid, so that the velocity field in the $i$-th fluid is $\nabla^\perp\psi_i := (-\dell_Y\psi_i,\dell_x\psi_i)$. Lastly, we suppose that the curl of the velocity field is some constant $\omega \in \R$ in the upper layer, but $0$ in the lower layer. Standard arguments involving Bernoulli's law then lead to the following free boundary problem for the functions $\psi_1,\psi_2,\eta$:
\begin{subequations}\label{eqn:stream}
  \begin{alignat}{2}
    \label{eqn:stream:lap1}
    -\Delta_{ x, Y}  \psi_1 &= 0 &\qquad & \fora 0 <  Y < h+ \eta, \\
    \label{eqn:stream:lap2}
    -\Delta_{ x, Y}  \psi_2 &= \omega && \fora  h+ \eta < Y < 1, \\
    \label{eqn:stream:kintop}
     \psi_2 &= - m_2  && \ona  Y=1, \\
    \label{eqn:stream:kinint}
     \psi_1 =  \psi_2 &= 0 && \ona  Y=h+ \eta, \\
    \label{eqn:stream:kinbot}
     \psi_1 &=  m_1  && \ona  Y=0, \\
    \label{eqn:stream:dynint}
    \tfrac 12 \rho\abs{\nabla_{ x, Y}  \psi_2}^2 
    - \tfrac 12 \abs{\nabla_{ x, Y}  \psi_1}^2
    + (\rho-1) \eta &=  Q && \ona Y=h+ \eta.
  \end{alignat}
  The boundary conditions \eqref{eqn:stream:kintop}--\eqref{eqn:stream:kinint} are called \emph{kinematic boundary conditions}, while \eqref{eqn:stream:dynint} is called the \emph{dynamic boundary condition}. The constants $m_1,m_2$ are the mass fluxes in each layer, while $Q$ is a Bernoulli constant. We will always consider classical solutions where the functions $\psi_1,\psi_2,\eta$ are all $C^{2+\alpha}_\bdd$ on (the closures of) their respective domains.
  
  While our methods can also be used to construct solitary wave solutions of \eqref{eqn:stream}, we will focus on the much more difficult case of fronts, sometimes called \emph{bores} in the literature. That is, we will seek solutions where $\psi_1,\psi_2,\eta$ have well-defined limits as $x \to -\infty$ (``upstream'') and as $x \to +\infty$ (``downstream'') that do not coincide. For simplicity, and because this is the case of most interest in applications, we assume that the velocity in the upstream state is continuous. The upstream limit is then is uniquely determined by requiring
  \begin{align}
    \label{eqn:stream:asym}
     \psi_{1Y}(x,0),\,\psi_{2Y}(x,0) \rightarrow -c, 
    \quad 
     \eta(x) \rightarrow 0 
    \qquad 
    \asa x \rightarrow -\infty.
  \end{align}
  Here the Froude number $c$ is a dimensionless wavespeed as measured in a reference frame where the fluid particles on the bed are stationary in the upstream limit; this is in keeping with typical conventions for periodic and solitary waves without vorticity. The second requirement that $\eta \to 0$ as $x \to -\infty$ means that $h$ is the upstream thickness of the lower fluid region.
\end{subequations}

Throughout this section we will view $\rho,\omega$ as fixed and treat $c,h$ as parameters. This is in part motivated by the fact that $\rho$ and $\omega$ are both constants of motion for the time-dependent problem.

\subsection{Main results}

Our main existence result is informally described in Theorem~\ref{thm:ww} below. A crucial part of the proof is an understanding of the so-called \emph{conjugate flow equations} which constrain the upstream and downstream depths $h,h_+$ of the lower layer and the Froude number $c$. To streamline the presentation, we defer a detailed discussion of these equations to Section~\ref{sec:conjugate} below. There, we also prove Lemma~\ref{lem:conj}, which gives sufficient conditions for the conjugate flow equations to be locally solvable for $c$ and $h_+$ in terms of $h$.

\begin{theorem}[Existence of rotational bores]\label{thm:ww}
  Consider the water wave problem \eqref{eqn:stream} with fixed density ratio $0 < \rho \le 1$ and (constant) vorticity $\omega$, and suppose that the height $h_0 \in (0,1)$ and Froude number $c_0$ satisfy the hypotheses \eqref{eqn:conjassumpt} of Lemma~\ref{lem:conj} as well as \eqref{eqn:f300pos} below. Then, for $0 < \abs\varepsilon  \ll 1$, there is a family of bore-type solutions of \eqref{eqn:stream} with upstream depths $h^\varepsilon = h_0 + \varepsilon$, Froude numbers $c^\varepsilon = c_0 + O(\varepsilon)$, and
  \begin{align}
    \label{eqn:boreexpand}
    \begin{aligned}
      \eta^\varepsilon(x) &= a_1 \varepsilon \frac{1+\tanh(\l_1\abs\varepsilon x)}2
      + O(\varepsilon^2),\\
      \psi_1^\varepsilon(x,Y) &= -c^\varepsilon (Y-h^\varepsilon) + c^\varepsilon \eta^\varepsilon(x) (1-Y) + 
      O(\varepsilon^2) ,\\
      \psi_2^\varepsilon(x,Y) &= -c^\varepsilon (Y-h^\varepsilon) 
      - \tfrac 12 \omega (Y-h^\varepsilon)^2
      + c^\varepsilon \eta^\varepsilon(x) (1+Y) + 
      O(\varepsilon^2),
    \end{aligned}
  \end{align}
  in $C^{2+\alpha}_\bdd$ of their respective domains, for some constants $a_1 \ne 0$ and $\l_1 > 0$.
\end{theorem}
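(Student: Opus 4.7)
The plan is to recast the free-boundary system \eqref{eqn:stream} as a quasilinear elliptic transmission problem on a fixed cylinder, apply the center manifold reduction of Section~\ref{extensions section}, and then use the conjugate flow theory of Lemma~\ref{lem:conj} to locate the second equilibrium of the resulting planar ODE. First I would flatten the interface by setting $y = h_0 Y / (h + \eta)$ in the lower layer and a similar affine rescaling in the upper layer, mapping the physical domains to fixed regions $\R \times (0, h_0)$ and $\R \times (h_0, 1)$ meeting at $\{y = h_0\}$. After subtracting the parallel shear flow associated with the upstream state and promoting $\eta$ to an interior variable, the system takes the form $\F(u, c, h) = 0$ with $u = (\psi_1, \psi_2, \eta)$ and transmission conditions of the type \eqref{transmission elliptic PDE}. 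The trivial parallel flows form a two-parameter family indexed by $(c, h)$, so the linearized operator at $(c_0, h_0)$ has the structure \eqref{linear tranmission F}; the spectral hypothesis \eqref{lambda0 assumption} is verified under the assumptions of Lemma~\ref{lem:conj} since the condition for a simple zero eigenvalue of $L'$ on shear flows coincides with the long-wave dispersion relation implicit in \eqref{eqn:conjassumpt}.

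Invoking the transmission version of Theorem~\ref{reduction theorem} and Theorem~\ref{expansion theorem}, every sufficiently small bounded solution lies on a center manifold described by a reduced ODE $v'' = f(v, v', c, h)$. I would use the freedom in Remark~\ref{projection remark} to choose the projection so that $v(x) = \eta(x)$ directly, keeping the physical interface in view, and exploit the $x \mapsto -x$ reversibility of \eqref{eqn:stream} to deduce $f(A, -B, c, h) = f(A, B, c, h)$. With the anticipated scaling $A \sim \varepsilon$, $B \sim \varepsilon^2$ of Section~\ref{subsubsec scaling}, I would compute the Taylor coefficients $\Psi_{ijk}$ by iterating \eqref{eqn for Psi} for $i + 2j + k \le 3$, and extract from $\Psi$ the expansion of $f$ up to cubic order in $A$ and linear order in $\varepsilon := h - h_0$ and $\delta := c - c_0$. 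The coefficient of $A^3$ at the critical point is precisely the quantity assumed nonzero in \eqref{eqn:f300pos}.

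Next I would invoke conservation of flow force for \eqref{eqn:stream}, which descends to a first integral of the reduced ODE, to identify the conjugate flow equations constraining the two equilibria. By Lemma~\ref{lem:conj} these can be solved to produce $c^\varepsilon = c_0 + O(\varepsilon)$ and a downstream depth $h_+^\varepsilon = h_0 + a_1 \varepsilon + O(\varepsilon^2)$ for $h = h_0 + \varepsilon$, so that the reduced planar system admits equilibria at $(0, 0)$ and at $(a_1 \varepsilon, 0) + O(\varepsilon^2)$. After rescaling $v = \varepsilon V$ and $x = (\l_1 \varepsilon)^{-1} X$, the truncated reduced ODE becomes a cubic whose two simple real roots have the correct sign thanks to \eqref{eqn:f300pos}, and therefore admits the explicit tanh heteroclinic displayed in \eqref{eqn:boreexpand}.

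The main obstacle is the persistence of this heteroclinic once the higher-order terms are restored. Unlike the elasticity problem in Section~\ref{sec ap}, the water wave problem lacks a $v \mapsto -v$ symmetry, and the second equilibrium only exists for parameters $(c, h)$ on the conjugate flow locus; this is why Lemma~\ref{lem:conj} is indispensable. Once $(c^\varepsilon, h^\varepsilon)$ is pinned to that locus, one is again in the favorable setting of a reversible planar system whose unstable manifold at the origin crosses the symmetry axis $\{v' = 0\}$ transversely at $\varepsilon = 0$; this transversality persists for $|\varepsilon| \ll 1$, and reversibility then reflects the half-orbit into a full heteroclinic. Finally, the componentwise asymptotics \eqref{eqn:boreexpand} for $\psi_1^\varepsilon, \psi_2^\varepsilon$ follow from substituting $v = \eta^\varepsilon$ into the representation \eqref{FS expression}, using that the leading-order profile of $\Psi$ in each layer is the linear-in-$Y$ shear dictated by the kernel of the transversal operator.
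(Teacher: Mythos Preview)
Your overall architecture---flatten, subtract the shear, apply the transmission version of Theorem~\ref{reduction theorem}, project onto $\eta$, expand to cubic order, invoke Lemma~\ref{lem:conj}---matches the paper. The gap is in the persistence step.

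Your argument that ``the unstable manifold at the origin crosses the symmetry axis $\{v'=0\}$ transversely at $\varepsilon=0$; this transversality persists \dots and reversibility then reflects the half-orbit into a full heteroclinic'' does not work here. First, the truncated orbit $v^0(x) = a_1\varepsilon\tfrac{1+\tanh(\kappa_1|\varepsilon|x)}{2}$ has $(v^0)'$ of a single sign, so it never meets $\{W=0\}$ except asymptotically at the two equilibria; there is no transverse crossing to perturb. Second, and more fundamentally, the reversibility $(V,W)\mapsto(V,-W)$ maps an orbit from $p$ to $q$ into one from $q$ to $p$. Reflecting a half-orbit emanating from $(0,0)$ across $\{W=0\}$ produces a homoclinic to $(0,0)$, not a heteroclinic to $(a_1,0)$. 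The mechanism you are invoking is the one from Section~\ref{sec ap}, but that relied on the \emph{additional} symmetry $(V,W)\mapsto(-V,-W)$ coming from $u\mapsto -u$, which interchanges the two equilibria; the water wave problem has no such symmetry.

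The paper closes this gap by pushing the flow force further than you do. You correctly note that $\flow$ descends to a first integral $\flowred(V,W,\varepsilon)$ on the center manifold, but you use it only to derive the conjugate flow equations. The paper instead expands $\flowred$ itself to leading order and checks that (i) its truncation is, up to a nonzero factor, the Hamiltonian of the truncated cubic ODE, and (ii) the conjugate flow equations force the second equilibrium to lie on the \emph{same} level set $\flowred=0$ as the origin for all small $\varepsilon$. The heteroclinic is then recovered for $\varepsilon\ne 0$ as the nondegenerate connected component of $\{\flowred=0\}$ joining the two saddles, a level-set persistence argument that needs no transverse crossing of a symmetry axis.
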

\begin{remark}\label{ww monotone remark}
  The characterization of $\eta^\varepsilon$ as a solution of a second-order ODE actually furnishes much more detailed information. In particular, we can check that $\eta^\varepsilon$ inherits the strict monotonicity properties of its leading order approximation. Combining this with a maximum principle argument yields monotonicity of the full solutions; see Theorem~\ref{cats eye theorem}.
\end{remark}

The various assumptions in Theorem~\ref{thm:ww}, as well as the explicit formulas for the parameters $a_1,\l_1$ in \eqref{eqn:boreexpand}, can all be stated explicitly in terms of $h_0,c_0,\rho,\omega$. Sadly, the formulas are quite lengthy, and so it is perhaps more instructive to look at special cases. The most classical and well-studied of these is the irrotational regime where $\omega = 0$.
\begin{corollary}[Irrotational bores]\label{cor:irrot}
  The hypotheses of Theorem~\ref{thm:ww} are satisfied if we set 
  \begin{align*}
    \omega = 0, 
    \qquad 
    h_0 = \frac 1{1+\sqrt \rho}, 
    \qquad 
    c_0= \pm \frac{\sqrt{1-\rho}}{1+\sqrt \rho}. 
  \end{align*}
  The relevant family of conjugate flows $(h^\varepsilon,h_+^\varepsilon,c^\varepsilon)$ and constants $a_1,\l_1$ in \eqref{eqn:boreexpand} are given by
  \begin{align*}
    c^\varepsilon = c_0, 
    \qquad 
    h_+^\varepsilon = h_0,
    \qquad 
    a_1 = -1, 
    \qquad 
    \l_1^2 =  \frac{3(\sqrt \rho + 1)^4}{4\sqrt \rho(\rho-\sqrt \rho +1 )}.
  \end{align*}
\end{corollary}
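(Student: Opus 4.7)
The plan is to specialize the general machinery to $\omega = 0$, verify the two abstract hypotheses of Theorem~\ref{thm:ww} for the stated $(h_0, c_0)$, and then read off the leading-order constants from the reduced ODE.

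First, I would check that $(h_0, c_0)$ is critical in the sense required by Lemma~\ref{lem:conj}. The transversal linearization at the $x$-independent state $(h, c)$ is a pair of Laplace equations for the stream-function perturbations in the two fluid regions, coupled by a jump condition at $Y=h$. Separating variables yields an explicit trigonometric criterion on $(h, c, \rho)$ for $\nu_0 = 0$, and eliminating $c$ reproduces the classical Benjamin relation $\sqrt\rho\, h_0 = 1-h_0$ together with $c_0^2 (1+\sqrt\rho)^2 = 1-\rho$, which are exactly the stated formulas. The $\pm$ in $c_0$ reflects the invariance of \eqref{eqn:stream} under $x \mapsto -x$.

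Next, I would analyze the conjugate flow equations of Section~\ref{sec:conjugate} specialized to $\omega = 0$. These are the algebraic constraints expressing mass conservation in each layer together with continuity of Bernoulli's function and conservation of flow force across the bore. A direct calculation shows that for every $h$ near $h_0$ the triple $(h, h_0, c_0)$ solves this system, so the nontrivial branch of conjugate flows coincides with the horizontal line $\{h_+ = h_0,\ c=c_0\}$. Parametrizing by $\varepsilon := h - h_0$ yields the claimed family, and the nondegeneracy hypotheses \eqref{eqn:conjassumpt} of Lemma~\ref{lem:conj} reduce to transversality of this line to the trivial branch $\{h_+ = h\}$, which is immediate.

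Finally, I would verify \eqref{eqn:f300pos} and compute $a_1,\l_1$ by applying the iterative expansion of Section~\ref{subsubsec iteration} within the transmission framework outlined in Section~\ref{extensions section}. Each $\Psi_{ijk}$ solves a linear transmission problem $L\Psi_{ijk} = F_{ijk}$ with $\FSproj \Psi_{ijk} = 0$, and irrotationality ensures that the inhomogeneities $F_{ijk}$ are polynomial in $Y$, so the $\Psi_{ijk}$ admit closed-form expressions in terms of rational functions of $\sqrt\rho$. Substituting into \eqref{reduced f definition} expresses $f(A,0,0)$ as an explicit cubic in $A$ whose coefficient of $A^3$, after using $\sqrt\rho\, h_0 = 1 - h_0$, collapses to a positive rational function of $\sqrt\rho$ and delivers the stated formula for $\l_1^2$. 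Matching the heteroclinic of the resulting scalar ODE to the shifted $\tanh$ ansatz in \eqref{eqn:boreexpand} then fixes $a_1 = -1$. The main obstacle is precisely this last bookkeeping step: before the irrotational simplifications, the raw expression for $f_{300}$ emerging from the iteration is unwieldy, so the technical heart of the argument is organizing the cancellations in a disciplined way.
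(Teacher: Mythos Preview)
Your plan is sound and would work, but it diverges from the paper's route in one substantial way. The paper does not re-run the center manifold iteration for $\omega=0$; instead, Section~5.5 already records closed-form expressions for $f_{300}$, $f_{201}$, and $f_{102}$ valid for \emph{arbitrary} $(h_0,c_0,\rho,\omega)$, together with $a_1 = -2f_{201}/f_{300}$ and $\l_1^2 = f_{201}^2/(18f_{300})$. The corollary is then a pure substitution exercise: set $\omega=0$, $h_0 = 1/(1+\sqrt\rho)$, $c_0^2 = (1-\rho)/(1+\sqrt\rho)^2$, simplify, and check the sign condition \eqref{eqn:f300pos}. Likewise for the conjugate-flow part: rather than reasoning about transversal eigenvalue problems and Bernoulli relations, one substitutes directly into the explicit polynomials $\polydyn$ and $\polynew$ from \eqref{eqn:polydyn}--\eqref{eqn:polynew} and into the determinants \eqref{eqn:nondegen}. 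Your observation that $(h,h_0,c_0)$ solves the system for every $h$ is exactly right and matches the paper's remark that $h_+^\varepsilon$ and $c^\varepsilon$ are constants.

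Your third step---recomputing the $\Psi_{ijk}$ by hand in the irrotational special case---would certainly succeed and has the pedagogical advantage of being self-contained, but it duplicates work the paper has already absorbed into the general formulas. The ``unwieldy'' expression for $f_{300}$ you anticipate is precisely the displayed formula in Section~5.5, and the cancellations you describe are what collapse it upon inserting the Benjamin relation $\sqrt\rho\, h_0 = 1-h_0$. So there is no genuine gap in your argument; it is simply the longer of two equivalent routes.
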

This is the case treated by Mielke~\cite{mielke1995homoclinic}. Notice that, in particular, the solutions $(h^\varepsilon,h_+^\varepsilon,c^\varepsilon)$ have \emph{exact} formulas and that $h_+^\varepsilon$ and $c^\varepsilon$ are actually constants~\cite{laget1997numerical}. This simplifies the analysis enourmously.

When $\omega \ne 0$, interesting new phenomena can occur.  For example,  the upper fluid may contain \emph{critical layers}, curves along which $\psi_{2Y} = 0$. In the setting of Theorem~\ref{thm:ww}, there will always be such a critical layer provided $c_0$, $h_0$, and $\omega$ satisfy the inequality 
\begin{equation} \label{critical layer hypothesis}
  c_0(c_0+(1-h_0)\omega) < 0. 
\end{equation}
The upstream height of the critical layer is then $h^\varepsilon - c^\varepsilon/\omega$. Perhaps the simplest situation where this arises is when $\rho=1$ so that the fluid density is homogeneous. 
\begin{corollary}[Homogeneous-density bores]\label{cor:hom}
  The hypotheses of Theorem~\ref{thm:ww} are satisfied if we set 
  \begin{align*}
    \rho = 1,
    \qquad 
    h_0 = \frac 23
    \qquad 
    c_0 = - \frac{2\omega}9 \ne 0.
  \end{align*}
  The relevant family of conjugate flows $(h^\varepsilon,h_+^\varepsilon,c^\varepsilon)$ and constants $a_1,\l_1$ in \eqref{eqn:boreexpand} are given by
  \begin{align*}
  c^\varepsilon = c_0 + \frac \omega 3\varepsilon, 
    \qquad  
    h_+^\varepsilon = h_0 - \varepsilon,
    \qquad 
    a_1 = -2
    \qquad 
    \l_1^2 = \frac{243}{16}.
  \end{align*}
  In particular, there is an upstream critical layer at height $8/9+2\varepsilon/3$.
\end{corollary}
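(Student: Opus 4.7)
The plan is to specialize Theorem~\ref{thm:ww} to the case $\rho=1$, which collapses the conjugate flow system enough to admit closed-form solutions. First I would write down the conjugate flow equations from Section~\ref{sec:conjugate} at $\rho=1$: with the gravity term $(\rho-1)\eta$ absent from the dynamic boundary condition \eqref{eqn:stream:dynint}, these reduce to conservation of mass in each layer together with a polynomial relation among $h,h_+,c$ coming from equality of flow force. Direct substitution should verify that the proposed branch $(c^\varepsilon,h_+^\varepsilon) = (c_0 + \omega\varepsilon/3,\, h_0 - \varepsilon)$ with $h^\varepsilon = h_0+\varepsilon$ satisfies this algebraic system, and I would then check that the Jacobian of the conjugate flow map is nondegenerate at $(h_0,c_0)=(2/3,-2\omega/9)$, which is precisely hypothesis \eqref{eqn:conjassumpt} of Lemma~\ref{lem:conj}.

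Next I would extract $a_1$ and $\l_1$ from the general formulas obtained in the proof of Theorem~\ref{thm:ww}. Setting $\rho=1$ kills several groups of terms in the Taylor expansion of the coordinate map $\Psi$ furnished by Theorems~\ref{reduction theorem} and~\ref{expansion theorem}, and the cubic coefficient of the reduced ODE at $(h_0,c_0)=(2/3,-2\omega/9)$ should simplify sufficiently to be evaluated by hand, yielding $a_1=-2$ and $\l_1^2 = 243/16$ and along the way verifying the positivity hypothesis \eqref{eqn:f300pos}. The critical layer claim is then immediate: in the moving frame the upstream horizontal velocity in the upper layer is $\psi_{2Y} = -c^\varepsilon - \omega(Y-h^\varepsilon)$, which vanishes at $Y = h^\varepsilon - c^\varepsilon/\omega = 8/9 + 2\varepsilon/3$, a height lying in $(h^\varepsilon,1)$ for small $\varepsilon$. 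Condition \eqref{critical layer hypothesis} at the basepoint reduces to $c_0(c_0+\omega/3) = -2\omega^2/81$, which is negative whenever $\omega\ne 0$, as guaranteed by the assumption $c_0\ne 0$.

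The main obstacle, as the authors already flag, is that the general expressions for $a_1$ and $\l_1$ are long enough that the paper declines to reproduce them, so extracting the clean numerical values will most likely require computer-algebra assistance. This is substantially mitigated by two special features of the $\rho=1$ regime: both $h_+^\varepsilon$ and $c^\varepsilon$ are linear in $\varepsilon$, and $h^\varepsilon$ is trivially affine in $\varepsilon$, so no higher-order information about the conjugate flow branch is needed to evaluate the leading-order coefficients of the reduced ODE. It will also be worth checking, as a sanity test, that the homogeneous-density limit of the irrotational formulas in Corollary~\ref{cor:irrot} is compatible with the $\omega\to 0$ limit of the present expressions after accounting for the vanishing of $c_0$.
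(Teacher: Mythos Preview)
Your plan is correct and matches the paper's (implicit) approach: the corollary is stated without proof, with the only remark being that the conjugate flow equations can be solved explicitly with $h_+^\varepsilon$ and $c^\varepsilon$ linear in $\varepsilon$, so the verification reduces to substituting $\rho=1$, $h_0=2/3$, $c_0=-2\omega/9$ into the conditions \eqref{eqn:conjassumpt}, \eqref{eqn:f300pos}, and the explicit formulas for $f_{300},f_{201},a_1,\l_1$ displayed after \eqref{eqn:wwode0}. One small caveat: your proposed sanity check at the end does not work as stated, since both the $\omega\to 0$ limit here and the $\rho\to 1$ limit in Corollary~\ref{cor:irrot} force $c_0\to 0$, which is excluded; the two families degenerate rather than meet.
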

As with the irrotational case, we can solve the conjugate flow equations explicitly,  this time with $h_+^\varepsilon,c^\varepsilon$ both linear functions of $\varepsilon$.

Theorem~\ref{thm:ww} also applies in situations when the conjugate flow equations cannot be explicitly solved;  we then rely on Lemma~\ref{lem:conj} to guarantee the existence of solutions and also to expand them to $O(\varepsilon^2)$. We offer two examples: one with critical layers and one without.
\begin{corollary}[A ``generic'' example without critical layers]
  The hypotheses of Theorem~\ref{thm:ww} are satisfied if we set 
  \begin{align*}
    \rho = \frac {25}{52},
    \qquad 
    \omega = -\frac 9{10},
    \qquad 
    h_0 = \frac 23,
    \qquad 
    c_0 = \frac 12.
  \end{align*}
  The relevant family of conjugate flows $(h^\varepsilon,h_+^\varepsilon,c^\varepsilon)$ and constants $a_1,\l_1$ in \eqref{eqn:boreexpand} are given by
  \begin{gather*}
    c^\varepsilon = c_0 - \frac 6{29} \varepsilon + O(\varepsilon^2),
    \qquad 
    h_+^\varepsilon = h_0 - \frac{179}{725} \varepsilon + O(\varepsilon^2),
    \qquad 
    a_1 = -\frac{904}{725},
    \qquad 
    \l_1^2 = \left(\frac{226}{145}\right)^2 \frac{243}{43}.
  \end{gather*}
  None of these solutions have critical layers.
\end{corollary}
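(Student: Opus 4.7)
The plan is to reduce the corollary to a sequence of explicit numerical verifications at the base point $(\rho,\omega,h_0,c_0) = (25/52,\,-9/10,\,2/3,\,1/2)$. None of the steps requires new conceptual machinery beyond what has already been developed: the existence of a bore family and the leading asymptotics in \eqref{eqn:boreexpand} are furnished by Theorem~\ref{thm:ww}, provided we (i) exhibit the local family of conjugate flows via Lemma~\ref{lem:conj}, (ii) check the non-degeneracy condition \eqref{eqn:f300pos}, and (iii) confirm the stated numerical values of $a_1,\kappa_1$ and the absence of critical layers. Each of these is a finite calculation in the parameters $\rho,\omega,h_0,c_0$.

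First I would handle the conjugate flow family. Inserting the specified values into the hypotheses \eqref{eqn:conjassumpt} of Lemma~\ref{lem:conj} gives an algebraic inequality (or non-vanishing Jacobian condition) that one checks by direct arithmetic; once verified, Lemma~\ref{lem:conj} produces $C^{M+1}$ functions $\varepsilon \mapsto (h_+^\varepsilon,c^\varepsilon)$ solving the conjugate flow equations with $h^\varepsilon = h_0 + \varepsilon$. The first-order coefficients in
\[
c^\varepsilon = c_0 - \tfrac{6}{29}\varepsilon + O(\varepsilon^2),
\qquad
h_+^\varepsilon = h_0 - \tfrac{179}{725}\varepsilon + O(\varepsilon^2)
\]
are obtained by implicitly differentiating the conjugate flow equations once at $\varepsilon = 0$ and solving the resulting $2\times 2$ linear system; the non-degeneracy from Lemma~\ref{lem:conj} guarantees that this system is invertible, and the rational entries make the solution rational.

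Next I would verify \eqref{eqn:f300pos} by plugging $(h^0,c^0,h_+^0) = (h_0,c_0,h_0)$ into the explicit expression for $\partial_A^3 f(0,0,0)$ coming from the Taylor coefficients $\Psi_{ijk}$ of the coordinate map in Theorem~\ref{expansion theorem}. The same expression, together with $\partial_A^2 f(0,0,0)$ and $\partial_\varepsilon \partial_A f(0,0,0)$, then yields $a_1$ and $\kappa_1$ via the balancing \eqref{scale k a}, producing the claimed values
\[
a_1 = -\tfrac{904}{725},
\qquad
\kappa_1^2 = \Bigl(\tfrac{226}{145}\Bigr)^2 \tfrac{243}{43}.
\]
Finally, the absence of critical layers follows from the sign test \eqref{critical layer hypothesis}: a direct substitution gives
\[
c_0\bigl(c_0 + (1-h_0)\omega\bigr) = \tfrac{1}{2}\bigl(\tfrac{1}{2} + \tfrac{1}{3}\cdot(-\tfrac{9}{10})\bigr) = \tfrac{1}{10} > 0,
\]
and by continuity the inequality $c^\varepsilon(c^\varepsilon + (1-h^\varepsilon)\omega) > 0$ persists for all $|\varepsilon|$ small, so $\psi_{2Y}^\varepsilon$ remains nonzero throughout the upper layer.

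The main obstacle is purely computational: the general formulas for $\partial_A^k f(0,0,0)$ and for the conjugate-flow Jacobian are, as the authors note, too cumbersome to write down, and so one must either keep them symbolic and substitute at the very end, or (more cleanly) observe that with $\rho,\omega,h_0,c_0$ all rational, every intermediate quantity is rational, and the verification reduces to exact arithmetic in $\mathbb{Q}$. A computer algebra check is the most reliable way to carry this out, but the pattern of the calculation is identical to Corollary~\ref{cor:irrot} and Corollary~\ref{cor:hom}, where the simplifications $\omega=0$ or $\rho=1$ eliminate entire families of terms; here nothing drops out, which is exactly why the example is described as ``generic.''
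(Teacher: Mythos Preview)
Your proposal is correct and matches the paper's approach: the corollary is stated without a separate proof precisely because it is a direct computational verification, and you have correctly identified all the pieces---checking \eqref{eqn:conjassumpt} for Lemma~\ref{lem:conj}, extracting the linear coefficients of $(h_+^\varepsilon,c^\varepsilon)$ by implicit differentiation, verifying \eqref{eqn:f300pos}, reading off $a_1,\kappa_1$, and ruling out critical layers via the sign of $c_0(c_0+(1-h_0)\omega)$. One small sharpening: the constants $a_1$ and $\kappa_1$ are obtained not from the heuristic balancing \eqref{scale k a} but from the explicit formulas $a_1 = -2f_{201}/(3f_{300})$ and $\kappa_1^2 = f_{201}^2/(18 f_{300})$ derived after \eqref{eqn:wwv0}, with $f_{300},f_{201}$ given in closed form just above; substituting the rational parameter values into those expressions is what produces the stated numbers.
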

\begin{corollary}[A ``generic'' example with critical layers]
  The hypotheses of Theorem~\ref{thm:ww} are satisfied if we set 
  \begin{align*}
    \rho = \frac 1{28},
    \qquad 
    \omega = -18,
    \qquad 
    h_0 = \frac 23,
    \qquad 
    c_0 = 1.
  \end{align*}
  The relevant family of conjugate flows $(h^\varepsilon,h_+^\varepsilon,c^\varepsilon)$ and constants $a_1,\l_1$ in \eqref{eqn:boreexpand} are given by
  \begin{gather*}
    c^\varepsilon = c_0 + \frac 34 \varepsilon + O(\varepsilon^2),
    \qquad 
    h_+^\varepsilon = h_0 + \frac{11}{10} \varepsilon + O(\varepsilon^2),
    \qquad 
    a_1 = \frac 1{10},
    \qquad 
    \l_1^2 = \frac{243}{3040}.
  \end{gather*}
  In particular, there is a critical layer upstream at height $13/18 + (25/24)\varepsilon + O(\varepsilon^2)$.
\end{corollary}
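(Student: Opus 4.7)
The strategy is to apply Theorem~\ref{thm:ww} directly to the specific parameter choices $\rho=1/28$, $\omega=-18$, $h_0=2/3$, $c_0=1$, so the task reduces to (i) checking that these values satisfy the hypotheses \eqref{eqn:conjassumpt} of Lemma~\ref{lem:conj} and the positivity condition \eqref{eqn:f300pos}, and (ii) carrying out the leading-order expansion of the implicitly defined functions $c^\varepsilon$, $h_+^\varepsilon$, $a_1$, $\l_1$. The critical layer assertion is then a simple substitution using \eqref{critical layer hypothesis}.

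First I would verify admissibility for the conjugate flow framework. With $c_0=1$, $\omega=-18$, $h_0=2/3$, the upper-layer horizontal velocity at the interface equals $c_0+(1-h_0)\omega = 1-6 = -5$, which is nonzero, so the flow is non-stagnant on the interface and the implicit function theorem applied in Lemma~\ref{lem:conj} should be applicable. Plugging into the conjugate flow equations (the flow-force and mass-flux identities balancing upstream and downstream states), at $\varepsilon=0$ the trivial conjugate flow with $h_+^0 = h_0$ is present, and the remaining nondegeneracy conditions in \eqref{eqn:conjassumpt} reduce to verifying that a certain explicit $2\times 2$ Jacobian in the $(c,h_+)$ variables is invertible. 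This is a concrete determinant calculation in rational arithmetic; with $\rho=1/28$ chosen precisely so that the cancellations work, it should yield a nonzero value, after which implicit differentiation gives
\begin{equation*}
  \frac{dc^\varepsilon}{d\varepsilon}\bigg|_0 = \frac 34,
  \qquad
  \frac{dh_+^\varepsilon}{d\varepsilon}\bigg|_0 = \frac{11}{10},
\end{equation*}
as claimed.

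Next I would verify \eqref{eqn:f300pos} and extract $a_1,\l_1$. By Theorem~\ref{thm:ww} (and the machinery of Section~\ref{subsubsec iteration}), these coefficients are determined by specific partial derivatives $f_A$, $f_{AA}$, $f_{AAA}$ of the reduced vector field evaluated at the origin, which in turn arise from solving the linear hierarchy \eqref{eqn for Psi} for $\Psi_{200}, \Psi_{101}, \Psi_{110}, \Psi_{300}$ and so on. The formulas given after Theorem~\ref{thm:ww} express $a_1$ and $\l_1$ as rational functions of $h_0, c_0, \rho, \omega$; the work here is just to substitute. I expect the algebra to collapse to $a_1 = 1/10$ and $\l_1^2 = 243/3040$, and the positivity in \eqref{eqn:f300pos} amounts to checking that the coefficient of $A^3$ in $f(A,0,0)$ has the correct sign, which is equivalent to $\l_1^2 > 0$.

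Finally, the critical layer condition \eqref{critical layer hypothesis} evaluates to $c_0(c_0+(1-h_0)\omega) = 1\cdot(-5) = -5 < 0$, so critical layers persist for small $\varepsilon$. The upstream critical layer height is $h^\varepsilon - c^\varepsilon/\omega$, and substituting $h^\varepsilon = 2/3+\varepsilon$, $c^\varepsilon = 1 + (3/4)\varepsilon + O(\varepsilon^2)$, $\omega = -18$ gives
\begin{equation*}
  h^\varepsilon - \frac{c^\varepsilon}{\omega} = \frac 23 + \varepsilon + \frac{1}{18} + \frac{1}{24}\varepsilon + O(\varepsilon^2)
  = \frac{13}{18} + \frac{25}{24}\varepsilon + O(\varepsilon^2),
\end{equation*}
matching the statement. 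The main obstacle I anticipate is purely computational: the coefficients in Theorem~\ref{thm:ww} are, as the authors note, too unwieldy to display in general, so the verification of \eqref{eqn:f300pos} and the extraction of $a_1,\l_1$ likely require a computer algebra system, with the chosen values of $\rho,\omega,h_0,c_0$ engineered so the resulting rationals are manageable.
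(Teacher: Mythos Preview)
Your proposal is correct and matches the paper's approach: the corollary is stated without a separate proof because it is a direct computational verification of the hypotheses \eqref{eqn:conjassumpt} and \eqref{eqn:f300pos}, followed by substitution into the explicit formulas for $f_{300}$, $f_{201}$, $a_1 = -2f_{201}/f_{300}$, and $\l_1^2 = f_{201}^2/(18f_{300})$, together with the critical-layer height formula $h^\varepsilon - c^\varepsilon/\omega$. One small point: the nondegeneracy Jacobians in \eqref{eqn:nondegen} are taken with respect to $(h,h_+)$ and $(h+h_+,c)$ rather than $(c,h_+)$, but this does not affect your outline.
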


\subsection{Conjugate flows}\label{sec:conjugate}
This subsection is devoted to the statement and proof of Lemma~\ref{lem:conj} on the existence of conjugate flows. Interesting in its own right, it is also one of main tools in proving Theorem~\ref{thm:ww}. 

\subsubsection*{Upstream limit and downstream limits}
Under mild regularity assumptions, the existence of the downstream and upstream limits
\begin{align*}
  \psi_1^\pm(Y):= \lim_{x \rightarrow \pm\infty}\psi_1(x,Y), 
  \qquad 
  \psi_2^\pm(Y):= \lim_{x \rightarrow \pm\infty}\psi_2(x,Y), 
  \qquad 
  \eta^\pm := \lim_{x \rightarrow \pm\infty}\eta(x)
\end{align*}
forces $(\psi_1^\pm,\psi_2^\pm,\eta^\pm)$ to each be $x$-independent solutions of \eqref{eqn:stream}. In particular, $\psi_1^\pm$ are linear in $Y$ while $\psi_2^\pm$ are quadratic. We will generally eliminate $\eta^\pm$ in favor of the upstream thickness $h$ and downstream thickness $h_+ := h+\eta^+$ of the lower fluid.

Upstream, we have the additional restrictions \eqref{eqn:stream:asym}, as well as the continuity assumption $\psi_{1Y}^-(h)=\psi_{2Y}^-(h)$ at the interface. Thus the upstream state is completely determined by the parameters $c,h,\omega$:
\begin{align}
  \label{eqn:upstream}
  \psi_1^- = -c(Y-h),
  \qquad 
  \psi_2^- = -c(Y-h)-\tfrac 12\omega (Y-h)^2.
\end{align}
Sending $x \rightarrow -\infty$ in \eqref{eqn:stream} we recover similarly explicit formulas for the fluxes $m_1,m_2$ and Bernoulli constant $Q$:
\begin{align}
  \label{eqn:mQup}
  m_1 = ch,
  \qquad 
  m_2 = c(1-h)+\tfrac 12\omega(1-h)^2,
  \qquad 
  Q = (\rho-1)\tfrac 12 c^2.
\end{align}

Now we turn to the downstream limit. In general, we cannot require it to also have a continuous velocity field, and hence the two constants
\begin{align*}
  c_1^+ := \psi_{1y}^+(h_+),
  \qquad 
  c_2^+ := \psi_{2y}^+(h_+)
\end{align*}
may differ. In terms of $c_1^+$ and $c_2^+$, the analogues of \eqref{eqn:upstream} and \eqref{eqn:mQup} are
\begin{align}
  \label{eqn:downstream}
  \begin{gathered}
    \psi_1^+ = -c_1^+(Y-h_+),
    \qquad 
    \psi_2^+ = -c_2^+(Y-h_+)-\tfrac 12\omega (Y-h_+)^2,
    \qquad 
    m_1 = c_1^+ h,\\
    m_2 = c_2^+ (1-h_+)+\tfrac 12 \omega(1-h_+)^2,
    \qquad 
    Q = \tfrac 12 \big(\rho (c_2^+)^2 - (c_1^+)^2\big) + (\rho-1)(h_+-h).
  \end{gathered}
\end{align}
Eliminating $m_1$ and $m_2$ between \eqref{eqn:mQup} and \eqref{eqn:upstream}, we can easily solve for $c_1^+$ and $c_2^+$ in terms of the other parameters. Further eliminating $Q$ we obtain a single equation relating the remaining parameters $h,h_+,c,\rho,\omega$.  Eventually, this equation simplifies to
\begin{align}
  \label{eqn:cancelfactor}
  \frac{h_+-h}
  {(1-h_+)^2 h_+^2}
  \polydyn(h,h_+,c)
  &= 0,
\end{align}
where $\polydyn=\polydyn(h,h_+,c)$ is a polynomial its arguments (as well as $\rho,\omega$),
\begin{equation}
  \begin{split}
    \label{eqn:polydyn}
    \polydyn &:=
    \omega^2 h_+^2 (h_+-h) (2-h_+-h)^2 \rho+4 h_+^2 (2 h_+^2-c^2 h_+-4 h_+-c^2 h+2 c^2+2) \rho\\
    &\qquad + 4 c \omega (1-h) h_+^2 (2-h_+-h) \rho -4 (1-h_+)^2 (2 h_+^2-c^2 h_+-c^2 h).
  \end{split}
\end{equation}
Since we are only interested in configurations where $h_+ \ne h$ and neither $h$ nor $h_+$ is $0$ or $1$, \eqref{eqn:cancelfactor} reduces to the polynomial equation $\polydyn(h,h_+,c)=0$. 

\subsubsection*{Flow force}
To obtain a second constraint on the parameters $h,h_+,c,\omega,\rho$, we introduce a quantity called the \emph{flow force}, which is related to the conservation of momentum~\cite{benjamin1984impulse}. In our variables, it takes the form
\begin{align}
  \label{eqn:flowforce}
  \begin{aligned}
    \flow(x) &:= \int_0^{h+\eta} \left( \frac 12 \abs{\nabla\psi_1}^2 - Y + \frac 12 c^2 + h \right) dY
    \\ &\qquad 
    + \rho \int_{h+\eta}^1 \left( \frac 12 \abs{\nabla\psi_2}^2 - Y - \omega\psi_2 + \frac 12 c^2 + h \right) dY.
  \end{aligned}
\end{align}
For solutions of \eqref{eqn:stream}, one can check that this quantity is independent of $x$. In particular, sending $x \to \pm\infty$ and simplifying we eventually obtain the polynomial equation
\begin{align*}
  0 = \polyflow(h,h_+,c) &:= 
  \omega^2 h_+ (h_+-h) (h_++3 h-4) \rho
  +12 h_+ (h_+-c^2-1) \rho
  \\&\qquad
  +12 c \omega (h-1) h_+ \rho 
  -12 (h_+-1) (h_+-c^2).
\end{align*}
Here as above we have used our assumptions that $h_+ \ne h$ and $h,h_+ \ne 0,1$ to drop some nonzero factors. 

\subsubsection*{Constructing conjugate flows}
The equations $\polydyn = \polyflow = 0$ are called the \emph{conjugate flow equations} for our problem~\cite{benjamin1971unified}. Because of a degeneracy in this system when $h_+=h$, it will be easier to work with an equivalent system where the polynomial $\polyflow$ is replaced by 
\begin{align}
  \label{eqn:polynew}
   \polynew(h,h_+,c) := 
   \frac{2(h-1)h}{h_+-h}\bigg(\polyflow(h,h_+,c) - 
   \frac{\polyflow(h,h,c)}
  {\polydyn(h,h,c)}\polydyn(h,h_+,c)\bigg),
\end{align}
which one can verify is also a polynomial in its arguments (as well as $\omega,\rho$). We denote this ``desingularized'' set of conjugate flow equations by
\begin{align}
  \label{eqn:conj}
  \polyconj(h,h_+,c) := \big(\polydyn(h,h_+,c),  \polynew(h,h_+,c)\big) = 0,
\end{align}
where $\polydyn$ and $\polynew$ are defined in \eqref{eqn:polydyn} and \eqref{eqn:polynew} above.

Using the implicit function theorem, it is now straightforward to give conditions guaranteeing the existence of a one-parameter families of conjugate flows, that is, solutions $(h,h_+,c)$ of \eqref{eqn:conj}. We record one such result in the following lemma.

\begin{lemma}[Existence of conjugate flows]\label{lem:conj}
  For a fixed density $\rho$ and vorticity $\omega$, suppose that the depth $h_0 \in (0,1)$ and Froude number $c_0 \ne 0$ satisfy 
  \begin{subequations}\label{eqn:conjassumpt}
    \begin{align}
      \label{eqn:bif}
      \polyconj(h_0,h_0,c_0) = 0
    \end{align}
    as well as the nondegeneracy conditions
    \begin{align}
      \label{eqn:nondegen}
      \det \polyconj_{(h,h_+)}(h_0,h_0,c_0) \ne 0,
      \quad 
      \det
      \big(\polyconj_h + \polyconj_{h_+},\,
      \polyconj_c\big)
      (h_0,h_0,c_0) \ne 0.
    \end{align}
  \end{subequations}
  Then there exists a family of solutions $\{(h^\varepsilon,h_+^\varepsilon,c^\varepsilon)\}$ to the conjugate flow equations \eqref{eqn:conj} for $\abs \varepsilon < \varepsilon_0 \ll 1$ that depends analytically on $\varepsilon$ and satisfies 
  \begin{align*}
    h^\varepsilon &= h_0+\varepsilon,\\
    h_+^\varepsilon &= h_0 + h_{+,1} \, \varepsilon + h_{+,2} \, \varepsilon^2 + O(\varepsilon^3),\\
    c^\varepsilon &= c_0 + c_1 \varepsilon + c_2 \varepsilon^2 + O(\varepsilon^3).
  \end{align*}
  Moreover, $h_{+,1} \ne 1$ so that, perhaps after shrinking $\varepsilon_0$,  $h^\varepsilon \ne h_+^\varepsilon$ for $\varepsilon \ne 0$. Thus these conjugate flows are nontrivial in that the upstream and downstream states are distinct.
\end{lemma}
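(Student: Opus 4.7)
The plan is to prove Lemma~\ref{lem:conj} by a direct application of the analytic implicit function theorem to the polynomial map $\polyconj = (\polydyn, \polynew) \maps \R^3 \to \R^2$ at the base point $(h_0, h_0, c_0)$, where $\polyconj$ vanishes by the bifurcation hypothesis \eqref{eqn:bif}. Since $\polyconj$ is polynomial, hence real analytic, and the first part of the nondegeneracy assumption \eqref{eqn:nondegen} provides a nonvanishing $2\by 2$ minor of the full Jacobian $D\polyconj(h_0, h_0, c_0)$, this Jacobian has rank $2$, and so $\polyconj^{-1}(0)$ is locally a smooth real analytic one-dimensional curve through the base point. I would then parametrize this curve by $\varepsilon := h - h_0$, producing analytic functions $h_+^\varepsilon, c^\varepsilon$ with $h_+^0 = h_0$ and $c^0 = c_0$. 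The Taylor coefficients $h_{+,k}, c_k$ for $k \ge 1$ would be extracted in the usual way, by differentiating the identity $\polyconj(h_0 + \varepsilon, h_+^\varepsilon, c^\varepsilon) = 0$ successively in $\varepsilon$, evaluating at $\varepsilon = 0$, and solving the resulting $2 \by 2$ linear systems.

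To verify the crucial nondegeneracy $h_{+,1} \ne 1$, and hence that the family is nontrivial, I would use the Pl\"ucker coordinates of the Jacobian to read off the tangent direction to the solution curve at the base point:
\begin{align*}
(T_h, T_{h_+}, T_c) \;\propto\; \bigl(\det(\polyconj_{h_+}, \polyconj_c),\; -\det(\polyconj_h, \polyconj_c),\; \det(\polyconj_h, \polyconj_{h_+})\bigr)\Big|_{(h_0,h_0,c_0)}.
\end{align*}
Taking the first two components and combining them yields
\begin{align*}
T_h - T_{h_+} \;\propto\; \det\bigl(\polyconj_h + \polyconj_{h_+},\, \polyconj_c\bigr)(h_0, h_0, c_0),
\end{align*}
which is nonzero by the second half of \eqref{eqn:nondegen}. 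Rewriting this as $h_{+,1} = T_{h_+}/T_h \ne 1$ and combining with the expansion $h_+^\varepsilon - h^\varepsilon = (h_{+,1} - 1)\varepsilon + O(\varepsilon^2)$ then shows that $h_+^\varepsilon \ne h^\varepsilon$ for $0 < \abs\varepsilon < \varepsilon_0$ after perhaps shrinking $\varepsilon_0$, so the conjugate flows are nontrivial.

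I do not anticipate any serious obstacle here: the result is essentially a piece of finite-dimensional linear algebra plus the implicit function theorem. The only mild subtlety is that the two nondegeneracy conditions in \eqref{eqn:nondegen} are phrased using $\det\polyconj_{(h,h_+)}$ and $\det(\polyconj_h+\polyconj_{h_+},\polyconj_c)$ rather than the more obvious $\det\polyconj_{(h_+,c)}$; they encode, respectively, the smoothness of the solution curve $\polyconj^{-1}(0)$ near $(h_0,h_0,c_0)$ and its transversality there to the trivial diagonal $\{h = h_+\}$, and the Pl\"ucker-coordinate calculation above is what translates them into information about the $\varepsilon$-parametrization.
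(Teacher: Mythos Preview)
The paper does not supply a detailed proof of this lemma, merely remarking that it is a straightforward consequence of the implicit function theorem; your argument fleshes out exactly that idea and correctly reads the two hypotheses in \eqref{eqn:nondegen} as, respectively, the rank condition on $D\polyconj$ (so $\polyconj^{-1}(0)$ is a real-analytic curve) and the transversality of that curve to the trivial diagonal $\{h=h_+\}$.

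There is one genuine gap, however. To parametrize the curve by $\varepsilon := h-h_0$ and obtain \emph{analytic} functions $h_+^\varepsilon,c^\varepsilon$, you need the $h$-component of the tangent to be nonzero, i.e.\ $T_h = \det(\polyconj_{h_+},\polyconj_c)(h_0,h_0,c_0)\ne 0$. This is \emph{not} a formal consequence of the two stated conditions: for instance the $2\times 3$ Jacobian with columns $\polyconj_h=(1,0)^T$, $\polyconj_{h_+}=(0,1)^T$, $\polyconj_c=(0,1)^T$ satisfies both parts of \eqref{eqn:nondegen} yet has $T_h=0$. Your final step $h_{+,1}=T_{h_+}/T_h$ tacitly assumes $T_h\ne 0$, and without it the curve could be tangent to $\{h=h_0\}$, so that $h$ fails to be a good local parameter. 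You should either record $\det(\polyconj_{h_+},\polyconj_c)(h_0,h_0,c_0)\ne 0$ as an additional mild hypothesis (it is easily checked in each of the paper's explicit corollaries), or argue from the specific structure of the polynomials $\polydyn,\polynew$ that it follows automatically from \eqref{eqn:conjassumpt}.
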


\subsection{Reformulating the problem}
In this subsection we reformulate \eqref{eqn:stream} as the elliptic transmission problem \eqref{eqn:final} in a \emph{fixed} domain. From now on we assume that the hypotheses of Lemma~\ref{lem:conj} are satisfied so that $h^\varepsilon, h_+^\varepsilon, c^\varepsilon$ are all defined.

\subsubsection*{Flattening the interface}
Our problem \eqref{eqn:stream} is a free boundary problem in that the interface $Y=h^\varepsilon+\eta$ between the two regions is itself an unknown.  As usual, it is helpful to switch to new coordinates where this boundary is fixed. In the absence of critical layers, one can use an elegant partial hodograph transformation in which $\psi_1,\psi_2$ are thought of as independent variables and $Y$ the dependent variable~\cite{dubreil1937theoremes}. We are interested in bores with critical layers, and therefore must allow for $\psi_2$ to be a multivalued function of $Y$. This leads us to instead make a simple piecewise-linear change of coordinates in the vertical variable $Y$: 
\begin{align}
  \label{eqn:coords}
  y &:= 
  \begin{cases}
    \displaystyle
    -1 + \frac 1{h^\varepsilon+\eta} Y  & \fora 0 < Y < h^\varepsilon+\eta \\[3ex]
    \displaystyle
    1 - \frac 1{1-h^\varepsilon-\eta} + \frac 1{1-h^\varepsilon-\eta} Y & \fora h^\varepsilon+\eta < Y < 1.
  \end{cases}
\end{align}
Thus the lower layer $0 < Y < h^\varepsilon+\eta$ is mapped onto the strip $-1 < y < 0$ while the upper layer $h^\varepsilon+\eta < Y < 1$ is mapped onto the strip $0 < y < 1$. Using subscripts $Y_1,Y_2,y_1,y_2$ to denote the vertical variables in the two layers, we have the chain rules
\begin{align}
  \label{eqn:chain}
  \begin{aligned}
    \partial_{x_1} &= \partial_x - \frac{1+y}{h^\varepsilon+\eta} \eta_x \partial_{y_1},
    &
    \partial_{x_2} &= \partial_x - \frac{1-y}{1-h^\varepsilon-\eta} \eta_x \partial_{y_2},\\
    \partial_{Y_1} &= \frac 1{h^\varepsilon+\eta}\partial_{y_1},
    &
    \partial_{Y_2} &= \frac 1{1-h^\varepsilon+\eta}\partial_{y_2},
  \end{aligned}
\end{align}
where the partials on the left hand side are with respect to the original $(x,Y)$ variables and those on the right are with respect to the transformed $(x,y)$ variables.

\subsubsection*{Subtracting off the trivial solution}
The upstream flow itself obviously solves \eqref{eqn:stream}, and so we work with normalized differences $u_1,u_2$ between our stream functions and these ``trivial'' ones:
\begin{align}
  \label{eqn:psi}
  u_1(x,y) := \frac{\psi_1(x,Y) - \psi_1^{-}(Y)}{c^\varepsilon},
  \qquad 
  u_2(x,y) := \frac {\psi_2(x,Y) - \psi_2^{-}(Y)}{c^\varepsilon}.
\end{align}
Note that the $\psi_i^{-}$ terms on the right hand side of \eqref{eqn:psi} are functions of the original variable $Y$ and not the transformed variable $y$. It is straightforward to obtain the corresponding functions of $y$ by first solving \eqref{eqn:coords} for $Y$ and then plugging into the explicit formulas \eqref{eqn:upstream}. Neither this choice nor the normalizing factor of $c^\varepsilon$ are essential, but both are convenient in later calculations. 

\subsubsection*{Final form of the equations}
\begin{subequations}\label{eqn:final}
  We now plug \eqref{eqn:psi} into \eqref{eqn:stream} and use \eqref{eqn:chain} to obtain a system of the form \eqref{transmission elliptic PDE} for $u:= (u_1,u_2)$ alone. We use one of the kinematic boundary conditions, \eqref{eqn:stream:kinint}, in order to write $\eta$ as the trace of $u_1$,
  \begin{align*}
    \eta(x) = u_1(x,0),
  \end{align*}
  thus eliminating it from the problem. Abusing notation slightly, we will nevertheless continue to write $\eta$ instead of $u_1|_\Gamma$ whenever convenient. The transformed problem is then
  \begin{alignat}{2}
    \nabla \cdot \A_1(y,u_1,\nabla u_1,u_1|_\Gamma,u_{1x}|_\Gamma,\varepsilon) &= 0 &\qquad& \ina \Omega_1 := \R \by (-1,0),\\
    \nabla \cdot \A_2(y,u_2,\nabla u_2,u_1|_\Gamma,u_{1x}|_\Gamma,\varepsilon) &= 0 &\qquad& \ina \Omega_2 := \R \by (0,1),\\
    \G(u_1,u_2,\nabla u_1, \nabla u_2,\varepsilon) &= 0 &\qquad& \ona \Gamma := \R \by \{0\},\\
    \K(u_1,u_2,\varepsilon) &= 0 &\qquad& \ona \Gamma,\\
    u_1 &= 0 && \ona \R \by \{-1\},\\
    u_2 &= 0 && \ona \R \by \{1\},
  \end{alignat}
\end{subequations}
where the functions $\A,\G,\K$ are given by
\begin{equation}\label{eqn:wwAGK}
  \begin{aligned}
    \A_1(y,u_1,\nabla u_1,u_1|_\Gamma,u_{1x}|_\Gamma,\varepsilon) &:= 
    \begin{pmatrix}
      (h^\varepsilon + \eta) u_{1x} -  (y+1) \eta_x u_{1y} \\[1ex]
      (h^\varepsilon + \eta)^{-1} ((1+y)^2\eta_x^2  + 1) u_{1y} - \eta_x (y+1) u_{1x}
    \end{pmatrix}
    \\
    \A_2(y,u_2,\nabla u_2,u_1|_\Gamma,u_{1x}|_\Gamma,\varepsilon) &:=
    \begin{pmatrix}
      (1-h^\varepsilon-\eta)u_{2x}
      -(1-y) \eta_x u_{2y}
      \\[1ex]
      (1-h^\varepsilon-\eta)^{-1} ((1+y)^2\eta_x^2+1) u_{2y}
      - (1-y)\eta_x u_{2x}
    \end{pmatrix}
    \\
    \K(y,u_1,u_2,\varepsilon) &:= u_2 - u_1 - \frac\omega{2c^\varepsilon} u_1^2\\
    \G(u_1,u_2,\nabla u_1, \nabla u_2,\varepsilon) &:=
    \frac \rho 2 \bigg(
    u_{2x}^2 - \frac{2\eta_xu_{2x} u_{2y} }{1-h^\varepsilon-\eta}
    + \frac{(1+\eta_x^2) u_{2y}^2 }{(1-h^\varepsilon-\eta)^2} 
    - \frac{2(c^\varepsilon + \omega \eta)u_{2y}}{1-h^\varepsilon-\eta} 
    \bigg)
    \\ &\qquad
    -\frac 12 \bigg(
    u_{1x}^2 - \frac{2\eta_xu_{1x} u_{1y} }{h^\varepsilon+\eta}
    + \frac{(1+\eta_x^2)u_{1y}^2}{(h^\varepsilon+\eta)^2} 
    - \frac{2c^\varepsilon u_{1y}}{h^\varepsilon+\eta} 
    \bigg)\\
    &\qquad
    + \frac{c^\varepsilon\omega \rho + \rho -1}{(c^\varepsilon)^2}\eta
    + \frac{\omega^2 \rho}{2(c^\varepsilon)^2} \eta^2.
  \end{aligned}
\end{equation}
We write \eqref{eqn:final} as $\F(u_1,u_2,\varepsilon) = 0$ where
  $\F \maps \Xspace \by \R \to \Yspace$ with $\Xspace,\Yspace$ defined in \eqref{definition X transmission} and \eqref{definition Y transmission}.

\subsection{Center manifold reduction}
The linearized operator at $(u,\varepsilon) = (0,0)$ is 
\begin{align*}
  Lu = 
  \begin{pmatrix}
    \nabla \cdot \Big(h_0 u_{1x}, h_0^{-1} u_{1y}\Big) \\
    \nabla \cdot \Big((1-h_0) u_{2x}, (1-h_0)^{-1} u_{2y}\Big) \\
    h_0^{-1} u_{1y} - \rho(1-h_0)^{-1} u_{2y}
    + c_0^{-2} (c_0 \omega \rho + \rho -1) u_1\\
    u_2-u_1
  \end{pmatrix},
  \qquad 
  L \maps \Xspace_\mu \to \Yspace_\mu
\end{align*}
which has the desired form \eqref{linear tranmission F}. Moreover, straightforward calculations using the assumption $\polydyn(h_0,h_0,c_0) = 0$ in Lemma~\ref{lem:conj} show that the spectral hypothesis \eqref{lambda0 assumption} is satisfied, and that
\begin{align*}
  \ker L = \{u(x,y) = (A+Bx)\varphi_0(y) : A,B \in \R\},
\end{align*}
where
\begin{align*}
  \varphi_0(y) := 
  \left\{
  \begin{alignedat}{2}
    1 + y &\quad& -1 &\le y \le 0 \\
    1 - y &\quad& 0 &\le y \le 1.
  \end{alignedat}
  \right.
\end{align*}
For the projection $\FSproj$ we choose
\begin{align*}
  \FSproj u := (v(0) + v'(0)x) \varphi_0(y),
  \qquad 
  \text{where } v(x) := u_1(x,0) = \eta(x).
\end{align*}
Applying Theorem~\ref{reduction theorem} as generalized in Section~\ref{extensions section}, we obtain that all small solutions $(u,\varepsilon) \in \Xspace_\bdd \by \R$ of \eqref{eqn:final} are of the form
\begin{align*}
  u(x,y) = v(0) \varphi_0 + v'(0) x \varphi_0 + \Psi(v(0),v'(0),\varepsilon)(x,y)
\end{align*}
for a $C^M$ coordinate map $\Psi \maps \R^3 \to \Xspace_\mu$. In this case the function $v$ satisfies the reduced ODE
\begin{align}
  \label{eqn:wwode}
  v'' = f(v,v',\varepsilon),
  \qquad 
  \text{where}
  \quad f(A,B,\varepsilon) := \frac{d^2}{dx^2}\Big|_{x=0} \Psi(A,B,\varepsilon)(x,0).
\end{align}
Remarkably, this is an ODE for the free surface elevation $\eta$ alone. As with the anti-plane shear problem in Section~\ref{sec ap}, the reversibility symmetry $u(x,y) \mapsto u(-x,y)$ of \eqref{eqn:final} implies that $\Psi(A,-B,\varepsilon)(x,y)= \Psi(A,B,\varepsilon)(-x,y)$ and hence that $f$ is even in $B$.

We now use Theorem~\ref{expansion theorem} to expand $\Psi$ (and thereby $f$), making the ansatz
\begin{align*}
  u(x,y) =(A+Bx)\varphi_0(y) + \sum_{\mathcal J} \Psi_{ijk}(x,y) A^i B^j \varepsilon^k + \mathcal R.
\end{align*}
Anticipating a scaling where $A \sim \varepsilon$ and $B \sim \varepsilon^2$, we work with the index set 
\begin{equation*}
  \mathcal{J} := \left\{ (i,j,k) \in \mathbb N^3 : i+2j+k \le 3, \ i + j + k \ge 2, \ i + j \ge 1 \right\},
\end{equation*}
so that $\mathcal R$ is $O\big( (\abs A + \abs B^{1/2} + \abs \varepsilon)^4\big)$ in $\Xspace_\mu$.

In principle, it is now straightforward to expand \eqref{eqn:final} and find the relevant $\Psi_{ijk}$ by collecting like terms and solving a sequence of linear equations of the form~\eqref{eqn for Psi}. In practice, however, these calculations are extremely tedious, partly due to the unwieldy form of the water wave problem \eqref{eqn:wwAGK} but more seriously because of the lengthy expressions for the coefficients $c_1,c_2$ in the expansion of $c^\varepsilon$ in Lemma~\ref{lem:conj}. Lastly, in order to check if complicated rational functions of $h_0,c_0$ are in fact zero, we must appeal to the highly nonlinear system of polynomial conjugate-flow equations $\polyconj(h_0,h_0,c_0) = 0$. We accomplished this latter task by transforming $\polyconj$ into a Gr\"obner basis and performing reductions using a computer algebra system. In certain situations, for instance the irrotational regime treated in Corollary~\ref{cor:irrot} and the homogeneous-density case considered in Corollary~\ref{cor:hom}, the conjugate flow equations have simple exact solutions, and so the analysis is substantially easier.

\subsection{Reduced ODE and truncation}
In the general case, we eventually find that $f(A,B,\varepsilon)$ has the form
\begin{align*}
  f(A,B,\varepsilon) &= \sum_{\mathcal J} \frac{d^2}{dx^2}\Big|_{x=0} \Psi_{ijk}(x,0) A^i B^j \varepsilon^k + r(A,B,\varepsilon) \\ 
  &= 
  f_{102} \varepsilon^2 A
  + f_{201} \varepsilon A^2
  + f_{300} A^3
  + r(A,B,\varepsilon)
\end{align*}
where the error term $r \in C^M$ and
\begin{equation*}
  r(A, B, \varepsilon) = O\left( |A|(|A| + |B|^{1/2} + |\varepsilon|)^3 + |B|(|A| + |B|^{1/2} + |\varepsilon|)^2  \right).
\end{equation*}
The coefficients are given by
\begin{align*}
  f_{300} &= \frac 32 \frac{(1-\rho)h_0^3 + c_0^2(4-5h_0)}{c_0^2 h_0^3 (1-h_0)^2 (\rho+(1-\rho)h_0)},\\
  f_{201} &=  \frac 92 \Big(c_0^2\big(1-h_0-2\rho +h_0^3 (1-\rho)^2 + h_0^2(4\rho -1)\big)-(1-h_0)^2 (3h_0^2 - 3h_0 + 2)(1-\rho)\Big)
  \\&\qquad \cdot
    \Big(
    c_0 h_0(1-h_0)^2(\rho+(1-\rho)h_0)\big(
    c_0(c_0^2 h_0^2 + (1-h_0)(c_0^2 h_0 + 2h_0 - 3 c_0^2))
  \\&\qquad\qquad
  - \omega (1-h_0)^2 h_0 (h_0 - c_0^2)\big)
    \Big)^{-1},\\
    f_{102} &= \frac{2f_{201}^2}{9f_{300}}.
\end{align*}
Using the assumptions \eqref{eqn:conjassumpt} of Lemma~\ref{lem:conj}, one can show that none of the above denominators vanish, and that $f_{300}, f_{201}, f_{102}$ are all nonzero. We additionally \emph{assume} that $f_{300} > 0$, which is equivalent to requiring that
\begin{align}
  \label{eqn:f300pos}
  h_0^3(1-\rho) + 4c_0^2(1-h_0) > c_0^2 h_0.
\end{align}
The truncated version of \eqref{eqn:wwode} is then
\begin{align}
  \label{eqn:wwode0}
  v^0_{xx} = f_{102} \varepsilon^2 v^0 + f_{201} \varepsilon (v^0)^2
  + f_{300} (v^0)^3,
\end{align}
which has the explicit solution
\begin{align}
  \label{eqn:wwv0}
  v^0(x) = a_1 \varepsilon \frac{1+\tanh(\l_1\varepsilon x)}2,
\end{align}
where
\begin{align*}
  a_1 = -\frac {2f_{201}}{f_{300}},
  \qquad 
  \l_1^2 = \frac{f_{201}^2}{18 f_{300}}.
\end{align*}

\subsection{Flow force on the center manifold}

Arguing as in Section~\ref{sec ap}, we can show that many features of the phase portrait of the truncated ODE \eqref{eqn:wwode0} persist in the full equation \eqref{eqn:wwode0}. In particular, there are three equilibria: saddles at $0$ and $a_1 \varepsilon + O(\varepsilon^2)$ and a center in between. Unfortunately, this is not enough information for the persistence of the heteroclinic orbit connecting the two saddles. For this we take advantage of the flow force $\flow$ defined in \eqref{eqn:flowforce}. 

Performing the various changes of variable, we can think of the flow force at a fixed $x$ as a functional of $(u,\varepsilon)$: $\flow = \flow(u,\varepsilon;x)$. Subtracting off its (constant) value at the trivial solution $u = 0$ and setting $x=0$, we consider the difference
\begin{align*}
  \flowdiff(u,\varepsilon) = \flow(u,\varepsilon;0) - \flow(0,\varepsilon;0).
\end{align*}
Since $\flowdiff$ only involves the values of $u$ and $\nabla u$ at $x=0$, it is a smooth function both $\Xspace_\bdd \by \R \to \R$ and $\Xspace_\mu \by \R \to \R$. We record the useful formula 
\begin{align}
  \label{eqn:Hu0}
  \flowdiff_u(0,\varepsilon) \dot u = 
  \rho(c^\varepsilon)^2(\dot u_2 - \dot u_1)(0,0)
\end{align}
for its Fr\'echet derivative at $u=0$.

When $(u,\varepsilon)$ corresponds to a solution on the center manifold, we can write
\begin{align*}
  \flowdiff(u,\varepsilon) = \flowred(v(0),v'(0),\varepsilon) ,
  \qquad 
  \text{where}
  \quad
  \flowred(A,B,\varepsilon) := \flowdiff\big((A+Bx)\varphi_0 + \Psi(A,B,\varepsilon),\, \varepsilon\big).
\end{align*}
Moreover, $\flowred(v,v',\varepsilon)$ will be constant for solutions of \eqref{eqn:wwode}. We now claim that $\flowred$ has the expansion
\begin{align}
  \label{eqn:flowredexp}
  \begin{aligned}
    \flowred(A,B,\varepsilon) 
    &= \flowred_{400} A^4 + \flowred_{301} A^3 \varepsilon + \flowred_{202} A^2 \varepsilon^2 + \tilde r(A,B,\varepsilon) \\
    &= 2 \flowred_{020}\left(\frac 12 B^2 - \frac{f_{300}}4 A^4 - \frac{f_{201}}2 A^3 \varepsilon - \frac{f_{102}}2 A^2 \varepsilon^2\right) + \tilde r(A,B,\varepsilon),
  \end{aligned}
\end{align}
where
\begin{align}
  \label{eqn:flow020}
  \flowred_{020} = - \tfrac 16c_0^2(\rho + (1-\rho)h_0) < 0
\end{align}
and the $C^M$ error term $\tilde r$ satisfies
\begin{align*}
  \tilde r(A,B,\varepsilon) = O\left( |A|(|A| + |B|^{1/2} + |\varepsilon|)^4 + |B|(|A| + |B|^{1/2} + |\varepsilon|)^3  \right).
\end{align*}
In particular, up to the nonzero factor $2 \flowred_{020}$, the truncation of $\flowred$ is precisely the Hamiltonian for the truncated ODE \eqref{eqn:wwode0}. 

Using the reversibility symmetry, we check that $\flowred$ is even in $B$, and so the smoothness of $\flowred$ implies
\begin{align}
  \label{eqn:sufficient}
  \frac{\flowred_B(A,B,\varepsilon)}B = 2\flowred_{020} + O(\abs A + \abs B + \varepsilon),
\end{align}
where $\flowred_{020} = \frac 12 \flowred_{BB}(0,0,0)$. Determining $\flowred_{020}$ in principle requires the coefficient $\Psi_{020}$ in the expansion of $\Psi$. When we actually go about calculating this coefficient and plugging it into \eqref{eqn:Hu0}, however, we see that it actually does not contribute, and that \eqref{eqn:flow020} holds. It is then straightforward to obtain \eqref{eqn:flowredexp} by combining $\flowred_{020} \ne 0$, \eqref{eqn:sufficient}, and the fact that $\flowred$ is a conserved quantity.


\subsection{Proof of existence}
Combining the previous subsection with Section~\ref{sec:conjugate}, we are now in a position to prove Theorem~\ref{thm:ww}.

\begin{proof}[Proof of Theorem~\ref{thm:ww}]
  Introducing the scaled variables
  \begin{equation*}
    x = {\abs\varepsilon}^{-1} X, 
    \quad 
    v(x) = \varepsilon V(X),
    \quad 
    v_x(x) = \varepsilon \abs \varepsilon W(X),
    \quad 
    \flowred(v,v_x,\varepsilon)
    = 2\varepsilon^4 \flowred_{020}\flowscaled(V,W,\varepsilon),
  \end{equation*}
  the reduced ODE \eqref{eqn:wwode} can be written as the planar system 
  \begin{equation*}
    \left\{
    \begin{aligned}
      V_X &= W \\
      W_X &= 
      f_{102} V + f_{201} V^2 + f_{300} V^3 
      + R(V,W,\varepsilon)
    \end{aligned}
    \right.
  \end{equation*}
  with conserved quantity
  \begin{align*}
    \flowscaled(V,W,\varepsilon) = 
      \frac 12 W^2 - \frac{f_{102}}2 V^2 - \frac{f_{201}}3 V^3 - \frac{f_{300}}4 V^4 
      + \tilde R(V,W,\varepsilon),
  \end{align*}
  and where the error terms satisfy
  \begin{align*}
     R(V,W,\varepsilon) = O(\abs \varepsilon (\abs V + \abs W)),
     \qquad 
     \tilde R(V,W,\varepsilon) = O(\abs \varepsilon (\abs V + \abs W)).
  \end{align*}
  When $\varepsilon = 0$, we have the explicit heteroclinic solution $V = a_1 (1+ \tanh(\l_1 X))/2$ connecting $(V,W) = (0,0)$ with $(V,W) = (a_1,0)$. This is the scaled version of $v^0$ in \eqref{eqn:wwv0}. Both of these equilibrium have same value, namely $0$, of the conserved quantity $\flowscaled$. For $\varepsilon \ne 0$, the equilibria at $(0,0)$ remains fixed while the equilibrium at $(a_1,0)$ persists but is perturbed. From Lemma~\ref{lem:conj} on conjugate flows, we in fact know that its \emph{exact} location is $(\varepsilon^{-1} h_+^\varepsilon -1, 0)$ and moreover that it continues to have $\flowscaled = 0$. The persistence of the full heteroclinic orbit then follows from its characterization as a nondegenerate level curve of the conserved quantity $\flowscaled$.
\end{proof} 

\subsection{Critical layers and streamline pattern} \label{streamline section}

Finally, in this section, we explore some qualitative features of the waves constructed above.  As we have seen, there are certain parameter regimes for which a streamline in the unperturbed flow is a critical layer.  For small bores, that streamline will split either upstream or downstream, opening into a  ``half cat's eye'' with its pupil at infinity. 

\begin{theorem}[Streamlines] \label{cats eye theorem}
  In the setting of Theorem~\ref{thm:ww}, suppose that \eqref{critical layer hypothesis} holds so that there is a critical layer, and suppose for concreteness that $\omega < 0$ (so that $c_0 > 0$) and $a_1\varepsilon < 0$. Perhaps shrinking $\varepsilon$ further, the streamlines of the corresponding solution $(\psi_1^\varepsilon, \psi_2^\varepsilon, \eta^\varepsilon)$ have the qualitative features of Figure~\ref{fig:cats}. Specifically,
  \begin{enumerate}[label=\rm(\alph*)]
  \item {\rm(Monotonicity)} \label{psi signs}
    The interface is strictly monotone with $\eta^\varepsilon_x < 0$. Moreover, $\psi^\varepsilon_x < 0$ for $Y \ne 0,1$ so that the vertical velocity is positive.
  \item {\rm(Critical layer)} \label{exist critical layer} 
    There is a unique $C^1$ curve $\mathcal{C}^\varepsilon$ in the interior of the upper fluid where $\psi_{2Y}^\varepsilon = 0$. Above this curve, $\psi_{2Y}^\varepsilon > 0$, and below it $\psi_{2Y}^\varepsilon < 0$. There are two streamlines, one above $\mathcal{C}^\varepsilon$ and one below, that both limit to $\mathcal{C}^\varepsilon$ upstream.  In the region they enclose (the eye), every streamline is a horizontally unbounded curve that opens to the right and has a unique turning point which is located on $\mathcal{C}^\varepsilon$. Outside the eye region, all streamlines extend from upstream to downstream.
  \end{enumerate}
\end{theorem}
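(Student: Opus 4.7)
The plan is to derive part \ref{psi signs} from the reduced ODE (for $\eta^\varepsilon$) combined with a maximum principle on $\psi_{ix}$, and then extract part \ref{exist critical layer} by studying $\psi_{2Y}$ directly and tracking $\psi_2$ along the resulting critical curve. Throughout I work in the original physical $(x,Y)$ variables.

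\medskip
\textbf{Monotonicity.} The construction of Theorem~\ref{thm:ww} realizes $\eta^\varepsilon=v$ as a heteroclinic orbit of the scaled planar system, sitting on a nondegenerate level set of the conserved quantity $\flowscaled$. Since $W$ vanishes on this orbit only at the two equilibria and $a_1\varepsilon<0$ fixes the direction of traversal, $W<0$ strictly in the interior, so $\eta^\varepsilon_x<0$. Now $\psi_{ix}$ is harmonic on each fluid region (because $\Delta\psi_i$ is constant), vanishes identically on $Y=0,1$, and decays at spatial infinity by the exponential weighted estimates underlying the center manifold construction. Differentiating the interface relation $\psi_i(x,h^\varepsilon+\eta^\varepsilon)=0$ gives $\psi_{ix}=-\psi_{iY}\eta^\varepsilon_x$ there; the expansion \eqref{eqn:boreexpand} forces $\psi_{iY}\big|_{\text{interface}}=-c^\varepsilon+O(\varepsilon)<0$, and combined with $\eta^\varepsilon_x<0$ this yields $\psi_{ix}<0$ on the interface. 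The strong maximum principle then promotes this to strict negativity throughout the interior of each layer, completing \ref{psi signs}.

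\medskip
\textbf{Critical layer.} Taking $\partial_Y$ of $\Delta\psi_2=-\omega$ shows $\psi_{2Y}$ is harmonic, and \eqref{eqn:boreexpand} gives $\psi_{2YY}=-\omega+O(\varepsilon)$ uniformly in the upper layer, hence $>0$ for $\varepsilon$ small. Thus $\psi_{2Y}(x,\placeholder)$ is strictly increasing in $Y$ for every $x$, and its boundary values are $\approx-c_0<0$ at the interface and $\approx-c_0-\omega(1-h_0)>0$ at $Y=1$ (by \eqref{critical layer hypothesis} with $\omega<0$). The intermediate value theorem gives a unique $Y_c(x)$ with $\psi_{2Y}(x,Y_c(x))=0$, and the implicit function theorem (via $\psi_{2YY}>0$) makes $Y_c$ a $C^1$ function; set $\mathcal C^\varepsilon:=\{(x,Y_c(x))\}$. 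The sign claims on $\psi_{2Y}$ above and below $\mathcal C^\varepsilon$ follow from its strict $Y$-monotonicity. Restricting $\psi_2$ to $\mathcal C^\varepsilon$ and using $\psi_{2Y}=0$ there, $\frac{d}{dx}\psi_2(x,Y_c(x))=\psi_{2x}(x,Y_c(x))<0$, so $\psi_2\big|_{\mathcal C^\varepsilon}$ is strictly decreasing from $\tilde c^*:=c^2/(2\omega)$ at $x=-\infty$ to $\tilde c^{**}:=(c_2^+)^2/(2\omega)<\tilde c^*$ at $x=+\infty$.

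\medskip
\textbf{Streamline topology.} For $\tilde c\in(\tilde c^{**},\tilde c^*)$ the strict monotonicity of $\psi_2|_{\mathcal C^\varepsilon}$ gives a unique intersection of $\{\psi_2=\tilde c\}$ with $\mathcal C^\varepsilon$, where the level set is vertical (the turning point). Off $\mathcal C^\varepsilon$ we have $\psi_{2Y}\neq 0$, so the level set is locally a graph $Y(x)$ with $dY/dx=-\psi_{2x}/\psi_{2Y}$, positive above $\mathcal C^\varepsilon$ and negative below; both branches thus emanate from the turning point opening to the right. Since $\tilde c<\min\psi_2^-=\tilde c^*$ there is no upstream limit, while $\tilde c>\min\psi_2^+=\tilde c^{**}$ provides two downstream limits, the two roots of $\psi_2^+(Y)=\tilde c$; both branches therefore extend to $x=+\infty$, giving the eye. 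The same argument at $\tilde c=\tilde c^*$ produces the two bounding streamlines, both asymptotic upstream to the double root of $\psi_2^-=\tilde c^*$, i.e.\ to $\mathcal C^\varepsilon$; for $\tilde c>\tilde c^*$ the level set avoids $\mathcal C^\varepsilon$, so $\psi_{2Y}$ keeps a fixed sign along it and, together with $\psi_{2x}<0$, the streamline extends globally from $-\infty$ to $+\infty$. The principal difficulty of the argument lies in this last step: certifying that eye streamlines really do reach $x=+\infty$ on both branches without looping, terminating in the interior, or escaping through the walls or interface. This is handled by the strict monotonicity of $\psi_{2x}$ and $\psi_{2Y}$ in $x$ and $Y$ respectively, combined with direct comparison to the explicit quadratic profiles $\psi_2^\pm$ from \eqref{eqn:upstream}--\eqref{eqn:downstream}.
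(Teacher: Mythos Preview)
Your proof of part~\ref{psi signs} and the construction of $\mathcal C^\varepsilon$ in part~\ref{exist critical layer} match the paper's argument essentially line for line: phase-plane monotonicity of $\eta^\varepsilon$, the kinematic relation on the interface, the maximum principle for the harmonic $\psi_{ix}$, and the implicit function theorem via $\psi_{2YY}^\varepsilon = -\omega + O(\varepsilon) > 0$.

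The one methodological difference is in how you analyze the streamlines inside the eye. You parametrize the two branches as graphs $Y(x)$ away from $\mathcal C^\varepsilon$ and then have to argue separately that each branch extends to $x=+\infty$ without looping or escaping; you call this the ``principal difficulty'' and dispatch it somewhat vaguely. The paper instead observes that since $\psi_{2x}^\varepsilon < 0$ \emph{everywhere} in the interior (already established in part~\ref{psi signs}), every level set $\{\psi_2^\varepsilon = \tilde c\}$ is globally a graph $x = \xi(Y)$ by the implicit function theorem. Then $\xi_Y = -\psi_{2Y}/\psi_{2x}$ vanishes only on $\mathcal C^\varepsilon$, with $\xi_{YY} > 0$ there, so the curve opens to the right with a unique turning point --- and the global-in-$Y$ parametrization automatically rules out loops or interior termination. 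This single switch of independent variable eliminates exactly the difficulty you flagged.

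One imprecision to fix: you invoke ``the strict monotonicity of $\psi_{2x}$ \ldots\ in $x$,'' which you have not established and do not need. What you actually use (and have) is the strict \emph{sign} $\psi_{2x} < 0$, together with the genuine $Y$-monotonicity of $\psi_{2Y}$ coming from $\psi_{2YY} > 0$.
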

\begin{remark}
  In \eqref{eye size equation} below we will see that the vertical extent of the eye is $O(\abs \varepsilon^{1/2})$. Changing the sign of $\omega$ (and hence $c_0$) changes the sign of the horizontal velocity throughout the fluid but preserves the streamline pattern. Changing the sign of $a_1 \varepsilon$ changes the signs of $\eta_x$ and $\psi_x$, reflecting the streamline pattern in Figure~\ref{fig:cats} but preserving the sign of the horizontal velocity.
\end{remark}

\begin{proof}
  We start by confirming monotonicity \ref{psi signs}. From the proof of Theorem~\ref{thm:ww}, our assumption that $a_1 \varepsilon < 0$, and Remark~\ref{ww monotone remark}, we know that $v' = \eta_x^\varepsilon < 0$. The asymptotics \eqref{eqn:boreexpand} also give $\psi_{2Y}^\varepsilon < 0$ along $Y=h^\varepsilon + \eta^\varepsilon$. Differentiating the kinematic condition \eqref{eqn:stream:kinint}, we see that this implies $\psi_{2x}^\varepsilon = -\eta^\varepsilon_x \psi_{2Y}^\varepsilon < 0$ there as well. But, $\psi_{2x}^\varepsilon$ is harmonic and vanishes on the upper boundary $\{ y = 1\}$ and at infinity. The maximum principle therefore implies that $\psi_{2x}^\varepsilon < 0$ in the interior of the upper fluid. Similarly, we find that $\psi_{1x}^\varepsilon < 0$ in the interior of the lower fluid.
 
  Now we turn to the more detailed claims in \ref{exist critical layer}. Setting $\varepsilon = 0$, we have
  \begin{align*}
    \psi_2^0(x,Y) = \psi_2^0(Y) = -c_0 (Y-h_0) - \tfrac 12 \omega (Y-h_0)^2.
  \end{align*}
  Differentiating, we find that $\psi_{2Y}^0 = 0$ at the unique height $Y_c^0 := h_0-c_0/\omega$, which lies in $(h_0,1)$ by \eqref{critical layer hypothesis}. Since $\psi_{2YY}^0 = -\omega > 0$ and
  $\psi_2^\varepsilon = \psi_2^0 + O(\varepsilon)$ in $C^{2+\alpha}_\bdd$ by \eqref{eqn:boreexpand}, the existence of $\mathcal{C}^\varepsilon$ now follows from the implicit function theorem. Indeed, it is the graph of a single-valued function of $x$. Moreover, $\psi_{2YY}^\varepsilon > 0$ for $0 < \varepsilon \ll 1$ so that $\psi_{2Y}^\varepsilon > 0$ above $\mathcal C^\varepsilon$ and $\psi_{2Y}^\varepsilon < 0$ below. From \eqref{eqn:boreexpand} we also have $\psi_{1Y}^\varepsilon < 0$ in the lower fluid.

  Examining the explicit formula for the upstream state \eqref{eqn:upstream}, we see that for $0 < -\varepsilon a_1 \ll 1$, the critical upstream height perturbs to  $Y_c^\varepsilon := h^\varepsilon - c^\varepsilon/\omega$ with the stream function value $(c^\varepsilon)^2/2\omega$. There are exactly two heights downstream at which the stream function takes on this value; the corresponding streamlines bound the eye region. Looking at \eqref{eqn:downstream}, we see that these heights are given by
   \begin{equation}\label{eye size equation}
   Y_c^0  \pm \frac 1\omega \sqrt{ \frac{2c_0 (c_0+\omega(1-h_0))}{1-h_0} a_1 \varepsilon} + O(\abs\varepsilon).
   \end{equation}
  From the assumptions $a_1 \varepsilon < 0$ and \eqref{critical layer hypothesis}, we have that the radicand is strictly positive and $O(\abs\varepsilon)$. 

  Pick any point inside the eye region. Applying the implicit function theorem, we see that the streamline through this point is globally parameterized as a graph $\{ x=\xi(Y)\}$ for some $C^1$ function $\xi$.  Moreover, the discussion above shows that $\xi_Y =0 $ only on $\mathcal{C}^\varepsilon$, and $\xi_{YY} > 0$ there.  The desired qualitative features of the streamline pattern inside the eye now follow.   On the other hand, outside this region, $\psi_Y^\varepsilon \neq 0$, so all streamlines must extend from $x=-\infty$ to $x = +\infty$.  
\end{proof}

\section*{Acknowledgements}

The research of RMC is supported in part by the NSF through DMS-1613375.  The research of SW is supported in part by the National Science Foundation through DMS-1812436.  

The authors also wish to thank the hospitality of the Erwin Schr\"odinger Institute for Mathematics and Physics, University of Vienna.  A portion of this work was completed during a Research-in-Teams Program generously supported by the ESI.  


The authors are grateful to Carmen Chicone and Bente Bakker for helpful suggestions during the writing of this paper.

\appendix

\section{Amick--Turner fixed point theory} \label{at appendix}

In this section, we present a highly abbreviated version of Amick and Turner's center manifold construction in \cite{amick1989small,amick1994center}.  Rather than state those results in full generality, we have specialized to the case most relevant to our needs.  An effort has also been made to simplify and standardize the notation.
 
Following the procedure in Section~\ref{reformulation fixed point section} leads us to study equations of the general form
\begin{equation}
 \label{at general equation}
\left\{ 
  \begin{aligned} 
    U_1 & = \xi_1 + \atF_1(U_1, U_2, R; \lambda, \beta)|_0^x  \\
    U_2 & = \xi_2 + \atF_2(U_1, U_2, R; \lambda, \beta)|_0^x \\
    R & = \atF_3(U_1, U_2, R; \lambda, \beta). 
  \end{aligned} \right. 
\end{equation}
Here, $(U_1, U_2, R)$ are the unknowns.  Motivated by \eqref{eqn:fixed}, where $U_2$ arises as a scaled derivative of $U_1$, we work in the space 
\[ W := (U_1, U_2, R) \in C_\mu^{k+\alpha}(\mathbb{R}) \times C_\mu^{k-1+\alpha}(\mathbb{R}) \times C_\mu^{k+\alpha}(\overline{\Omega}) =: \atXspace_\mu\]
for some $\mu \in [0,\overline{\mu})$, integer $k \geq 1$, and $\alpha \in (0,1)$. 
As before, let $\mathring{\atXspace}_\mu$ denote the corresponding homogeneous space, and $\atXspace_\bdd := \atXspace_0$.  

In \eqref{at general equation}, there are three types of parameters: $\xi = (\xi_1, \xi_2)$ is ``initial data'' for $U = (U_1, U_2)$; $\lambda \in \mathbb{R}$ is the main parameter of bifurcation; and $\beta \in (0,1]$ is, essentially, a rescaling of time needed to obtain a fixed point.  

Next, we impose some conditions on the nonlinear mappings in \eqref{at general equation}.   Assume that 
 \begin{equation}
 \label{at nonlinear terms form}
   \begin{split}
     \atF(W; \lambda, \beta) & = \beta \mathfrak{L} W   +  \mathfrak{H}(W; \lambda, \beta),
   \end{split}
\end{equation}
where $\mathfrak{L} = (\mathfrak L_1, \mathfrak L_2, 0)$ is a zeroth order linear mapping in the sense that
\begin{equation}
  \label{at A1} \mathfrak{L} \textrm{ is linear and  bounded } \atXspace_\mu \to \atXspace_{\mu} \textrm{ and } {\atXspace}_\bdd \to \mathring{\atXspace}_{\bdd}     
\end{equation} 
with bounds uniform in $\mu$ on compact subsets of $(0,\overline{\mu})$. 

Finally, suppose that each component of the nonlinear function $\mathfrak{H} = (\mathfrak{H}_1, \mathfrak{H}_2, \mathfrak{H}_3)$ takes the general form
\begin{equation}
  \label{at H term} \mathfrak{H}_i(W; \lambda, \beta) = \frac{1}{\beta^p} \mathfrak{I} S_g(\mathfrak{D} W; \lambda, \beta).
\end{equation}
Here, $p > 0$ corresponds roughly to the homogeneity of the nonlinearity created by $S_g$.   Intuitively, we think of $\mathfrak{D}$ as losing some number of derivatives, while $\mathfrak{I}$ is smoothing.   Between them is the mapping $S_g$, a general (parameter dependent) superposition operator.   Note that $p$, $\mathfrak{D}$, $S_g$, and $\mathfrak{I}$ can vary for each component, but we will  suppress this dependence to simplify notation.    Also, one can assume more generally that $\mathfrak{H}_i$ consists of a finite sum of terms of the form \eqref{at H term}. 

To state things more precisely, we introduce two (lower regularity) spaces:
\[ \atYspace_\mu := C_\mu^{j+\alpha}(\mathbb{R}; \mathbb{R}^\ell) \times C_\mu^{j+\alpha}(\overline{\Omega}; \mathbb{R}^m), \qquad 
\atZspace_\mu  := C_\mu^{j+\alpha}(\overline{\Omega}),\]
for some integers $j \geq 0$, $\ell,m \geq 1$ (again, each of these will in principle vary in $i$).  Now, assume that
\begin{equation}
  \label{at A2} \mathfrak{D} \textrm{ is linear and bounded } \atXspace_{\mu} \to \atYspace_\mu \textrm{ and } \atXspace_{\bdd} \to \atYspace_{\bdd}
\end{equation}
with bounds uniform in $\mu$ on compact subsets of $(0,\overline{\mu})$.    The superposition map $S_g$ is defined by
\begin{equation}
  \label{at superposition def} S_g (Y; \lambda, \beta)(x,y) := g(x,y, Y_1(x), \ldots, Y_\ell(x), Y_{\ell+1}(x,y), \ldots, Y_{\ell+m}(x,y); \lambda, \beta),   
\end{equation}
for all $Y \in \atYspace_\mu$ and $(x,y) \in \overline{\Omega}$.  Here, the function $g$ is assumed to satisfy 
\begin{equation}
 \label{at g assumptions}
  \begin{gathered} 
    g = g(x,y, w;  \lambda, \beta) \in C^{M+3}(\overline{\Omega} \times  \mathbb{R}^{\ell+m} \times \mathbb{R} \times  (0,1]; \mathbb{R}),  \\
  g(x,y,0;0,\beta) = 0,\quad g_w(x,y,0;0,\beta) = 0, \quad g_\lambda(x,y,0;0,\beta) = 0. 
  \end{gathered} 
\end{equation}
One can show that \eqref{at A2}--\eqref{at g assumptions} together ensure that
 \[ W \mapsto S_g(\mathfrak{D}W; \lambda, \beta) \qquad \textrm{is bounded } \atXspace_\mu \to \atZspace_\mu \textrm{ and } \atXspace_\bdd \to \atZspace_\bdd. \]  
Finally, $\mathfrak{I}$ is supposed to be smoothing in that it satisfies 
\begin{equation}
  \label{at A3} \mathfrak{I} \textrm{ is linear and bounded } \atZspace_\mu \to \atXspace_{i,\mu} \textrm{ and } \atZspace_{\bdd} \to \mathring{\atXspace}_{i,\bdd},  
\end{equation}
with bounds uniform in $\mu$ on compact subsets of $(0,\overline{\mu})$.

As is always the case with center-manifold constructions,
Amick and Turner do not treat \eqref{at general equation} directly but rather a truncated problem where each function $g$ in \eqref{at g assumptions} is replaced by
\begin{equation}\label{restriction composition}
  g^r(x,y,w_1,\ldots,w_{\ell+m};\lambda,\beta)
  := 
  g(x,y,\eta_r(w_1),\ldots,\eta_r(w_{\ell+m});\lambda,\beta)
\end{equation}
for an appropriate cutoff function $\eta_r$. We write the resulting fixed-point equations as 
\begin{equation}
  \label{at truncated equation}
  \left\{ 
  \begin{aligned} 
    U_1 & = \xi_1 + \atF_1^r(U_1, U_2, R; \lambda, \beta)|_0^x  \\
    U_2 & = \xi_2 + \atF_2^r(U_1, U_2, R; \lambda, \beta)|_0^x \\
    R & = \atF_3^r(U_1, U_2, R; \lambda, \beta). 
  \end{aligned} \right. 
\end{equation}
From \cite[Lemma 4.1,Theorem 4.1]{amick1994center} we know that, for each $M\in \mathbb N$, we can choose $\beta,r,\mu > 0$ sufficiently small so that $\atF^r$ has $M+1$ Lipschitz-continuous derivatives acting from $\atXspace_\mu \times \R^2 \times \R \times \R \to \atXspace_{(k+M+3)\mu}$.

\begin{theorem}[Center manifold] \label{at fixed point theorem}
  Consider the truncated fixed-point equation \eqref{at truncated equation} under the structural assumptions \eqref{at nonlinear terms form}--\eqref{at A3} enumerated above. Then, for any integer $M$, there exists $\mu \in (0,\overline{\mu})$, $r > 0$, and $\beta \in (0,1]$ so that the unique solution to \eqref{at truncated equation} is given by 
  \begin{align*}
    W = (U_1,U_2,R) =: \FP(\xi_1,\xi_2,\lambda) \in \atXspace_\mu
  \end{align*}
  where the mapping $\FP \maps \R^2 \by \R  \to \atXspace_\mu$ is $C^{M+1}$.
\end{theorem}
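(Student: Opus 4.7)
The plan is to recast the truncated system \eqref{at truncated equation} as a fixed-point equation $W = T(W;\xi,\lambda)$ on $\atXspace_\mu$, establish contractivity by choosing the parameters $\mu,\beta,r$ in the correct order, and then deduce smooth dependence on $(\xi,\lambda)$ from the parameterized contraction principle together with the regularity of $\atF^r$ recalled from \cite[Lemma 4.1, Theorem 4.1]{amick1994center}.

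First I would fix an arbitrary $\mu \in (0,\overline{\mu})$ and, using \eqref{at A1}--\eqref{at A3}, write the right-hand side of \eqref{at truncated equation} as
\[
  T(W;\xi,\lambda) := (\xi_1,\xi_2,0) + \beta\,\widetilde{\mathfrak L}W + \widetilde{\mathfrak H}(W;\lambda,\beta),
\]
where $\widetilde{\mathfrak L}$ incorporates the integration $|_0^x$ in the first two components, so that $\widetilde{\mathfrak L}\maps \atXspace_\mu\to \atXspace_\mu$ is bounded with operator norm $\le C(\mu)$ uniformly for $\beta\in(0,1]$, and $\widetilde{\mathfrak H}$ is built from the smoothed, cut-off superposition operators. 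By the flatness assumption~\eqref{at g assumptions} and the scale-$r$ truncation, the Lipschitz constant of $\widetilde{\mathfrak H}(\placeholder;\lambda,\beta)$ on the ball $\{\n W_{\atXspace_\mu}\le r\}$ is bounded by $C\beta^{-p}(r+|\lambda|)$ for some $p\ge 0$ and some $C$ depending on $\mu$. Thus the Lipschitz constant of $T(\placeholder;\xi,\lambda)$ on this ball is at most $C\beta + C\beta^{-p}(r+|\lambda|)$.

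Next I would fix $\beta\in(0,1]$ small so that $C\beta<1/4$, and then choose $r>0$ and a neighborhood $|\lambda|<\lambda_0$ with $C\beta^{-p}(r+\lambda_0)<1/4$. This makes $T(\placeholder;\xi,\lambda)$ a $\tfrac12$-contraction of $\atXspace_\mu$ onto itself, provided $|\xi|$ is taken sufficiently small to ensure $T$ preserves a sufficiently large ball. The Banach fixed-point theorem then yields a unique $W=\FP(\xi_1,\xi_2,\lambda)\in \atXspace_\mu$ solving \eqref{at truncated equation}. Global uniqueness in $\atXspace_\mu$ follows by noting that the cutoff freezes the argument of the superposition operators for $\n W_{\atXspace_\mu}\ge r$, so no fixed point can escape the ball where the contraction estimate holds.

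For the $C^{M+1}$ dependence of $\FP$ on $(\xi_1,\xi_2,\lambda)$, I would invoke the parameterized uniform contraction principle together with the smoothness statement recalled just before the theorem: upon decreasing $\mu$ (and adjusting $\beta,r$ accordingly), $\atF^r$ has $M+1$ Lipschitz-continuous derivatives as a map $\atXspace_\mu\times \R^2\times \R\times \R\to \atXspace_{(k+M+3)\mu}$. The parameterized contraction principle gives $\FP\in C^{M+1}$ into the target space in which $T$ is Lipschitz, namely $\atXspace_\mu$ (with the understanding that differentiating $\FP$ in parameters produces objects lying in spaces with strictly larger exponential weight). Since all relevant weights $(k+j+3)\mu$ for $0\le j\le M$ remain below $\overline\mu$ after an initial shrinking of $\mu$, this does not disrupt the earlier estimates.

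The step I expect to be most delicate is the order of quantifier selection, namely fixing $M$ first, then $\mu$ small enough that $(k+M+3)\mu<\overline\mu$, then $\beta$ small relative to the operator norms of $\widetilde{\mathfrak L}$ and the smoothing maps at this $\mu$, and finally $r$ small relative to the resulting $\beta^p$. One must verify along the way that the constants in \eqref{at A1}--\eqref{at A3} are uniform on compacta in $\mu$, so that shrinking $\mu$ by a bounded factor does not blow up the estimates, and that each successive derivative of $T$ in $(W,\xi,\lambda)$ still has the required smallness on the invariant ball --- this is essentially a bookkeeping exercise once the base-level contraction has been set up, but it requires the interplay between the weight loss upon differentiation and the $M+1$-fold smoothness of $\atF^r$ into the larger weighted space to work out exactly. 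The remaining considerations (measurability of $\beta$ as a choice, boundedness of $\mathfrak I$ against $\mathring{\atXspace}_{i,\bdd}$ in the unweighted case) are routine given the structural hypotheses.
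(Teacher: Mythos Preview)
The paper does not give an independent proof of this theorem: its ``proof'' consists of a single sentence citing Theorems~3.1, 3.3, 4.1 and Remark~3.2 of Amick--Turner \cite{amick1994center}. Your proposal is therefore not competing with an argument in the paper but rather sketching what the cited reference actually does, and in that respect it is broadly on target: the truncated superposition operators are globally Lipschitz with constant $O(\beta^{-p}(r+|\lambda|))$, one first fixes $\mu$ (small enough that $(k+M+3)\mu < \overline\mu$), then $\beta$, then $r$, and the $C^{M+1}$ dependence follows from the parameterized contraction principle together with the weight-loss differentiability of $\atF^r$ that the paper recalls just before the statement.

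Two points are worth tightening. First, because the cutoff $\eta_r$ acts pointwise on the arguments $w_i$, the truncated nonlinear term $\widetilde{\mathfrak H}$ is \emph{globally} Lipschitz on $\atXspace_\mu$ with the small constant, so $T$ is a contraction on all of $\atXspace_\mu$; there is no need to restrict to a ball or to assume $|\xi|$ small, and your ``global uniqueness'' argument via the cutoff ``freezing'' for large $\n W_{\atXspace_\mu}$ is not the right mechanism. Second, the theorem as stated claims $\FP$ is $C^{M+1}$ on all of $\R^2\times\R$, whereas your estimate $C\beta^{-p}(r+|\lambda|)$ only gives contractivity for $|\lambda|<\lambda_0$; in Amick--Turner this is handled (either by also truncating in $\lambda$ or by noting that only small $\lambda$ are relevant downstream), and you should flag which route you intend. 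These are bookkeeping issues rather than gaps in the underlying idea.
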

\begin{proof}
  This result is found by combining Theorem~3.1, Theorem~3.3, Remark 3.2, and Theorem~4.1 of \cite{amick1994center}.
\end{proof}

The coordinate mapping $\FP$ has flatness properties analogous to \eqref{Psi flatness}. A particular instance of this which we will need is the following.

\begin{lemma}\label{lem Rstructure}
Under the assumptions of Theorem~\ref{at fixed point theorem}, we have 
\begin{equation}\label{Rstructure f=0}
  |\FP_3(\xi,\lambda)(0,0)| + |\partial_x\FP_3(\xi,\lambda)(0,0)| \lesssim |\xi|(|\xi| + |\lambda|).
\end{equation}
\end{lemma}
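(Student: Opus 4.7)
The plan is to establish two complementary vanishing properties of $\FP_3$ at the origin and then close the estimate by a single-variable Taylor expansion in $\xi$ combined with continuity of point evaluation on $\atXspace_\mu$.

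The first identity is $\FP(0, \lambda) = 0$ in $\atXspace_\mu$ for all sufficiently small $\lambda$, which forces $\FP_3(0, \lambda) = 0$. This uses the PDE context in which Theorem~\ref{at fixed point theorem} is applied: the trivial solution assumption \eqref{F trivial solution assumption} gives $\nl(0, \lambda) = 0$, and unwinding the reformulation of Section~\ref{reformulation fixed point section} shows this is equivalent to $\mathfrak{H}(0; \lambda, \beta) = 0$ uniformly in $\lambda$. Hence $W = 0$ solves the truncated system \eqref{at truncated equation} at $\xi = 0$ for every small $\lambda$, and the uniqueness statement in Theorem~\ref{at fixed point theorem} forces $\FP(0, \lambda) = 0$.

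The second identity is $\FP_{3, \xi}(0, 0) = 0$ in $\atXspace_\mu$. Because $\mathfrak{L}_3 = 0$, the equation for $R = \FP_3$ reduces to $R = \mathfrak{H}_3^r(W; \lambda, \beta) = \beta^{-p} \mathfrak{I}\, S_{g}^r(\mathfrak{D}W; \lambda, \beta)$. Differentiating in $\xi_i$, evaluating at $(\xi, \lambda) = (0, 0)$ (so $W = 0$ by the first step), and applying the chain rule yields
\begin{equation*}
  \FP_{3, \xi_i}(0, 0) = \beta^{-p}\, \mathfrak{I}\!\left[ g_w(\placeholder, 0; 0, \beta)\, \mathfrak{D} \FP_{\xi_i}(0, 0) \right] = 0,
\end{equation*}
since $g_w(x, y, 0; 0, \beta) = 0$ by \eqref{at g assumptions}.

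Combining these identities with the $C^{M+1}$ smoothness of $\FP$ from Theorem~\ref{at fixed point theorem}, Taylor's theorem gives
\begin{equation*}
\FP_3(\xi, \lambda) = \FP_3(0, \lambda) + \int_0^1 \FP_{3, \xi}(t\xi, \lambda) \cdot \xi\, dt = \int_0^1 \FP_{3, \xi}(t\xi, \lambda) \cdot \xi\, dt
\end{equation*}
in $\atXspace_\mu$. Since $\FP_{3, \xi}$ is itself $C^M$ and vanishes at $(\xi, \lambda) = (0, 0)$, its operator norm satisfies $\|\FP_{3, \xi}(t\xi, \lambda)\| = O(|\xi| + |\lambda|)$ uniformly for $t \in [0, 1]$ in a neighborhood of the origin. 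This yields $\|\FP_3(\xi, \lambda)\|_{\atXspace_\mu} = O(|\xi|(|\xi| + |\lambda|))$, and because $\atXspace_\mu$ embeds continuously into $C^1$ (we have $k \geq 1$), both $\FP_3(\xi, \lambda)(0, 0)$ and $\partial_x \FP_3(\xi, \lambda)(0, 0)$ are controlled by this norm. The most delicate step is the first identity, where translating the abstract flatness in \eqref{at g assumptions} into the uniform-in-$\lambda$ identity $\mathfrak{H}(0; \lambda, \beta) = 0$ relies on the specific trivial solution structure of the PDE reformulation rather than only on the assumptions as stated in the appendix.
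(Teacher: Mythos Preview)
Your proof is correct and follows essentially the same approach as the paper: establish $\FP(0,\lambda)=0$ by uniqueness, show $D_\xi\FP_3(0,0)=0$ by differentiating the third fixed-point equation and using the flatness of $g$ together with $\mathfrak L_3=0$, and then Taylor expand in $\xi$. Your caution about the first identity is well placed: the abstract hypothesis \eqref{at g assumptions} in the appendix only asserts $g(x,y,0;0,\beta)=0$, whereas the argument (both yours and the paper's) actually needs $g(x,y,0;\lambda,\beta)=0$ for all $\lambda$, which indeed comes from the trivial-solution assumption \eqref{F trivial solution assumption} in the PDE context rather than from the appendix hypotheses alone.
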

\begin{proof}
  From the uniqueness of $\FP$ we have $\FP(0,\lambda) = 0$ for all $\lambda$. Moreover, differentiating the third equation in \eqref{at truncated equation} with respect to $\xi$ we discover
  \begin{align*}
    D_\xi \FP_3 = D_W\atF^r_3(\FP) D_\xi \FP.
  \end{align*}
  At $(\xi,\lambda) = (0,0)$, this becomes simply
  \begin{align*}
    D_\xi \FP_3(0,0) = \beta \mathfrak L_3 D_\xi \FP(0,0),
  \end{align*}
  where $\mathfrak L$ is the operator in \eqref{at A1}. But we have assumed that the third component $\mathfrak L_3$ of this operator vanishes, and so we simply obtain $D_\xi \FP_3(0,0) = 0$. Thus
    $\n{\FP_3(\xi,\lambda)}_{\atXspace_\mu} \lesssim \abs \xi (\abs \xi + \abs \lambda)$, which in particular implies \eqref{Rstructure f=0}.
\end{proof}

\section{Supplemental calculations} \label{calculation appendix}
In this section, we provide more details about how to apply the general strategy explained in Section \ref{general strategy sec} to our application problems, with an emphasis on how to obtain the scaling information to reparametrize the problem.

\subsection{Iteration for anti-plane shear}\label{subsec ap iteration}
Recall the problem \eqref{ap simple} with the original parameter $\lambda$. Plugging in the ansatz \eqref{ap W simple} for the strain energy we can write the problem as
\begin{equation}\label{ap appendix pde}
\Delta u + 2w_1\nabla \cdot \left(|\nabla u|^2 \nabla u \right) - b(u, \lambda) = 0.
\end{equation} 

Here we consider a more general case where the body force $b$ is smooth in its arguments. Taylor expansion and using \eqref{ap symmetry} and \eqref{ap force1} we obtain 
\[
b(z,\lambda) = -z + b_1 \lambda z + {1\over2} b_{z\lambda\lambda}(0,0) \lambda^2 z + b_2 z^3 + O\left((|\lambda| + |z|)^4 \right), 
\]
where
\[
b_1 := b_{z\lambda}(0,0), \quad b_2 :=  {1\over6}b_{zzz}(0,0).
\]

Because of the cubic term in \eqref{ap appendix pde}, we would like to expand the reduced ODE \eqref{reduced ODE} to third order. Following the general strategy in Section \ref{subsubsec iteration}, we can replace \eqref{ap appendix pde} by its truncation at order $K=3$,
\begin{equation}\label{ap truncated}
Lu = b_1 \lambda u + {1\over2} b_{u\lambda\lambda}(0,0) \lambda^2 u+ b_2 u^3 - 2w_1 \nabla \cdot \left(|\nabla u|^2 \nabla u \right). 
\end{equation}
Next, let us consider the anticipated scaling described in Section \ref{subsubsec scaling}.  Expanding $f$ as in \eqref{double expansion f} starting from \eqref{ap truncated}, we see that $f_A(0,0,\lambda) \sim \lambda$.  Moreover, if $w_1\ne 0$ or $b_2 \ne 0$, then the right-hand side of \eqref{ap truncated} is indeed cubic in $u$ so that $\partial^3_Af(0,0,\lambda) \sim 1$. Recalling \eqref{rescaling relation}, this predicts a balancing $\varepsilon^{2n} \sim \varepsilon^p \sim \varepsilon^{2q}$. We therefore take $n = q = 1$ and $p = 2$, which leads to the index set $\mathcal J$ given by \eqref{ap iteration}. Denote the reparametrization $\lambda  = \lambda_2 \varepsilon^2$. 

With the scaling settled, we make the ansatz $(A + Bx) \varphi_0 +\sum_{\mathcal J}\Psi_{ijk} A^i B^j \varepsilon^k$ for $u$ in \eqref{ap truncated}, obtaining
\begin{equation*}
  L\bigg( \sum_{\mathcal J} \Psi_{ijk} A^i B^j \varepsilon^k \bigg) = b_1 \lambda_2 A \varepsilon^2 \cos y + b_2 A^3 \cos^3 y + 2w_1 A^3 (\sin^3 y)_y.
\end{equation*}
Grouping like terms yields
%
\begin{gather*}
  L \Psi_{101} = L \Psi_{200} = L \Psi_{110} = L \Psi_{011} = 0,\\
  L \Psi_{102} = b_1 \lambda_2 \cos y, \quad L \Psi_{201} = 0, \quad L \Psi_{300} = b_2 \cos^3 y + 6w_1 \sin^2(y) \cos(y).
\end{gather*}
Applying Lemma \ref{lem bordering} allows us to iteratively solve these equations, and ultimately we find that 
\begin{gather*}
  \Psi_{101} = \Psi_{200} = 0, \qquad \Psi_{110} = \Psi_{200} = \Psi_{201} = 0, \\
  \Psi_{102} = {1\over2}b_1 \lambda_2 x^2 \cos y, \quad \Psi_{300} = {3b_2 + 6w_1 \over 8} x^2 \cos y + {b_2 - 6w_1 \over 32} (\cos y - \cos(3y)).
\end{gather*}
Thus, 
\[
f(A, B, \varepsilon) = b_1 \lambda_2 A \varepsilon^2 + {3(b_2 + 2w_1) \over 4} A^3 + r(A,B,\varepsilon).
\]

\subsection{Calculation for FKPP}\label{subsec fkkp iteration}
As in the previous subsection, we write the PDE as
\[
L u = -\lambda u_x - (\rho^2 -\rho_0^2) u + u^2,
\]
where $L$ is the linearized operator \eqref{def fkpp L}.
From this we do not immediately see a length scale unless we assume certain parameter dependence on $\rho^2 - \rho_0^2$. The $u_x$ term imposes a compatibility condition \eqref{compatibility}, which, in the FKPP case, reads  $f_A(0,0,\lambda) \sim |\rho^2 - \rho_0^2 | \sim \lambda^2$. The quadratic term in the PDE suggests that $m=2$ and $f_{AA}(0,0,\lambda) \sim 1$. Plugging this in \eqref{rescaling relation} we see that $\varepsilon^{2n} \sim \varepsilon^{2p} \sim \varepsilon^q$, and hence one can pick $n = p =1$ and $q = 2$.  This choice corresponds to the reparametrization
\[
\lambda = \lambda_1 \varepsilon, \quad \rho^2 = \rho_0^2 + \rho_2 \varepsilon^2,
\]
and the index set $\mathcal J$ given by \eqref{fkpp iteration}.

Expressed in the new parameter regime, the PDE becomes
\begin{equation*}
  L u = -\rho_2 \varepsilon^2 u - \lambda_1 \varepsilon u + u^2.
\end{equation*}
Setting $u$ to be $(A + Bx) \varphi_0 +\sum_{\mathcal J}\Psi_{ijk} A^i B^j \varepsilon^k$, it follows that
\begin{align*}
L\bigg( \sum_{\mathcal J} \Psi_{ijk} A^i B^j \varepsilon^k \bigg) & = -\rho_2\varphi_0 A\varepsilon^2 - \lambda_1\varphi_0 B\varepsilon - \rho_2 x\varphi_0 B\varepsilon^2 - \rho_2 \sum_{\mathcal J} \Psi_{ijk} A^i B^j \varepsilon^{k+2} \\
& \qquad - \lambda_1 \sum_{\mathcal J} \partial_x \Psi_{ijk} A^i B^j \varepsilon^{k+1} + \bigg( (A + Bx) \varphi_0 + \sum_{\mathcal J} \Psi_{ijk} A^i B^j \varepsilon^k \bigg)^2,
\end{align*}
which results in four equations
\begin{align*}
L \Psi_{101} & = 0, \qquad  L \Psi_{011} = -\lambda_1 \varphi_0, \qquad L \Psi_{102} = -\rho_2 \varphi_0 - \lambda_1 \partial_x \Psi_{101}, \qquad L \Psi_{200} = \varphi_0^2
\end{align*}
augmented with $\FSproj \Psi_{ijk} = 0$. The unique solvability of each of these problems is ensured by Lemma~\ref{lem bordering}. In particular, one can verify immediately that $\Psi_{101} = 0$ and hence $L \Psi_{102} = -\rho_2\varphi_0$. Therefore 
\[
\Psi_{011} = -{\lambda_1\over2} x^2 \cos(\rho_0 y), \qquad \Psi_{102} = -{\rho_2\over2} x^2 \cos(\rho_0 y).
\] 
Solving for $\Psi_{200}$ is much more complicated. Differentiating the equations for $\Psi_{200}$ with respect to $x$, we know that $L(\partial_x \Psi_{200}) = 0$. 
Hence $\partial_x \Psi_{200} = (c_1 + c_2 x) \varphi_0(y)$ for some constants $c_1$ and $c_2$. Antidifferentiating, this means that 
\[
\Psi_{200} = \left( c_1 x + {1\over2} c_2 x^2 \right) \cos(\rho_0 y) + g(y),
\]
for some function $g$. The constants $c_1,\ c_2$ and the function $g$ will be determined from the projection condition and the PDE. Combining these, we obtain
\begin{equation*}
\begin{cases}
g'' + \rho_0^2 g + c_2 \cos(\rho_0 y) = \cos^2(\rho_0 y) \quad \text{in }(0,1) \\
g'(0) = g'(1) + \beta g(1) = 0, \quad g(0) = c_1 = 0.
\end{cases}
\end{equation*}

This is an elementary ODE that can be solved explicitly, resulting in a somewhat complicated expression for $\Psi_{200}$. However in the reduced ODE we only need to find
\[
{d^2\over dx^2}\Big|_{x=0}\Psi_{200}(x,0) = c_2 = {4\sin (\rho_0) (3 - \sin^2(\rho_0)) \over 3 (\sin(2\rho_0) + 2\rho_0)}.
\]
Finally, this gives the expansion
\[
f(A, B, \varepsilon) = -\rho_2 A \varepsilon^2 - \lambda_1 B \varepsilon + {4\sin (\rho_0) (3 - \sin^2(\rho_0)) \over 3 ( \sin(2\rho_0) + 2\rho_0)} A^2  + r(A, B, \varepsilon).
\]

\bibliographystyle{siam}
\bibliography{projectdescription}

\end{document}